\tikzstyle{block} = [draw, rectangle, 
\theoremstyle{plain}
\newtheorem{teo}{Theorem}[section]
\newtheorem{lemma}[teo]{Lemma}
\newtheorem{pro}[teo]{Proposition}
\theoremstyle{definition}
\newtheorem{defi}[teo]{Definition}
\theoremstyle{remark}
\newtheorem{rem}[teo]{Remark}
\renewcommand{\d}{\operatorname{d}}
\newcommand{\Exp}[1]{\operatorname{e}^{#1}}
\renewcommand{\Re}{\operatorname{Re}}
\newcommand{\N}{\mathbb{N}}
\newcommand{\R}{\mathbb{R}}
\newcommand{\Z}{\mathbb{Z}}
\newcommand*\pFqskip{8mu}
\newcommand*\pFq{\begingroup
 \catcode`\,\active
 \def ,{\mskip\pFqskip\relax}%
 \dopFq
}
\def\dopFq#1#2#3#4#5{%
 {}_{#1}F_{#2}\biggl[\genfrac..{0pt}{}{#3}{#4};#5\biggr]%
 \endgroup
}
\newcommand{\KF}[5]{F^{#1}_{#2}\left[{#3\atop #4}\Bigg\vert #5\right]}
\DeclareRobustCommand{\gaussk}{\DOTSB\gaussk@\slimits@}
\newcommand{\gaussk@}{\mathop{\vphantom{\sum}\mathpalette\bigcal@{K}}}
\newcommand{\bigcal@}[2]{%
 \vcenter{\m@th
  \sbox\z@{$#1\sum$}%
  \dimen@=\dimexpr\ht\z@+\dp\z@
  \hbox{\resizebox{!}{0.8\dimen@}{$\mathcal{K}$}}%
 }%
}
\newcommand{\cfracplus}{\mathbin{\cfracplus@}}
\newcommand{\cfracplus@}{%
 \sbox\z@{$\dfrac{1}{1}$}%
 \sbox\tw@{$+$}%
 \raisebox{\dimexpr\dp\tw@-\dp\z@\relax}{$+$}%
}
\newcommand{\cfracdots}{\mathord{\cfracdots@}}
\newcommand{\cfracdots@}{%
 \sbox\z@{$\dfrac{1}{1}$}%
 \sbox\tw@{$+$}%
 \raisebox{\dimexpr\dp\tw@-\dp\z@\relax}{$\cdots$}%
}
\newcommand*{\relrelbarsep}{.386ex}
\newcommand*{\relrelbar}{%
 \mathrel{%
  \mathpalette\@relrelbar\relrelbarsep
 }%
}
\newcommand*{\@relrelbar}[2]{%
 \raise#2\hbox to 0pt{$\m@th#1\relbar$\hss}%
 \lower#2\hbox{$\m@th#1\relbar$}%
}
\providecommand*{\rightrightarrowsfill@}{%
 \arrowfill@\relrelbar\relrelbar\rightrightarrows
}
\providecommand*{\leftleftarrowsfill@}{%
 \arrowfill@\leftleftarrows\relrelbar\relrelbar
}
\providecommand*{\xrightrightarrows}[2][]{%
 \ext@arrow 0359\rightrightarrowsfill@{#1}{#2}%
}
\providecommand*{\xleftleftarrows}[2][]{%
 \ext@arrow 3095\leftleftarrowsfill@{#1}{#2}%
}
\begin{document}
 
 \title[Hahn multiple orthogonal polynomials of type I]{Hahn multiple orthogonal polynomials of type I: \\ Hypergeometrical expressions}

 \author[A Branquinho]{Amílcar Branquinho$^{1}$}
 \address{$^1$CMUC, Departamento de Matemática,
  Universidade de Coimbra, 3001-454 Coimbra, Portugal}
 \email{$^1$ajplb@mat.uc.pt}
 
 \author[JEF Díaz]{Juan E. F. Díaz$^{2}$}
 \email{$^2$juan.enri@ua.pt}
 \address{$^2$CIDMA, Departamento de Matemática, Universidade de Aveiro, 3810-193 Aveiro, Portugal}
 
 \author[A Foulquié]{Ana Foulquié-Moreno$^{3}$}
 \address{$^3$CIDMA, Departamento de Matemática, Universidade de Aveiro, 3810-193 Aveiro, Portugal}
 \email{$^3$foulquie@ua.pt}
 
 \author[M Mañas]{Manuel Mañas$^{4}$}
 \address{$^4$Departamento de Física Teórica, Universidad Complutense de Madrid, Plaza Ciencias 1, 28040-Madrid, Spain \&
  Instituto de Ciencias Matematicas (ICMAT), Campus de Cantoblanco UAM, 28049-Madrid, Spain}
 \email{$^4$manuel.manas@ucm.es}
 
 \keywords{multiple orthogonal polynomials of type I, Hahn polynomial, Meixner I, Meixner II, Kravchuk, Laguerre I, Laguerre II, Charlier, Askey scheme, generalized hypergeometric functions, Kampé de Feriet hypergeometric functions}

 \subjclass{42C05, 33C45, 33C47}

 \begin{abstract}
 Explicit expressions for the Hahn multiple polynomials of type I, in terms of Kampé de Fériet hypergeometric series, are given.
 Orthogonal and biorthogonal relations are proven. Then, part of the Askey scheme for multiple orthogonal polynomials type I is completed. In particular, explicit expressions in terms of generalized hypergeometric series and Kampé de Fériet hypergeometric series, are given for the multiple orthogonal polynomials of type I for the Jacobi--Piñeiro, Meixner I, Meixner II, Kravchuk, Laguerre I, Laguerre II and Charlier families.
 
 \end{abstract}
 
 \maketitle
 
 
 \tableofcontents
\section{Introduction}

The study of multiple orthogonal polynomials and its applications is a very active area of research. However, although many multiple orthogonal polynomials of type II have already been explicitly found, see \cite{Arvesu}, \cite{AskeyII}, \cite{Clasicos}, \cite{Aptekarev} and the \cite[Chapter 23]{Ismail}, there is a lack for analogous expressions for multiple orthogonal polynomials of type I, and only very recently, in the last three years, some explicit expressions for the type I families have appeared, cf. \cite{JP}, \cite{masmultiples}, \cite{LL}, \cite{VanAssche_Wolfs}.

Discrete Hahn multiple orthogonal polynomials of type I is one of those families that lack such expressions. Hahn polynomials are relevant for several reasons, cf. \cite[Chapter 6]{Ismail} and \cite{Nikiforov_Suslov_Uvarov}, but for our interest in the study of multiple orthogonality is instrumental for being a kind of ascendant of many families of multiple orthogonal polynomials that are recovered from it by adequate limits, following the paths indicated by an Askey scheme. 
We have succeeded in the finding of such an explicit expression in terms of Kampé de Fériet series. These expressions hold for any couple of nonnegative indexes, not necessarily in the step-line. Consequently, through the application of the mentioned limits and
inspired by the Askey scheme \cite{AskeyII}, we also succeed in finding hypergeometrical expressions for the multiple orthogonal polynomials of type I for the Jacobi--Piñeiro, Meixner I, Meixner II, Kravchuk, Laguerre I, Laguerre II and Charlier families.

 Multiple orthogonal polynomials naturally appear in the Hermite--Padé theory of simultaneous approximations and its different applications. Nowadays, we can find many texts with good introductions to the subject. We recommend to the interested reader the seminal book by Nikishin and Sorokin \cite{nikishin_sorokin} and the chapter 23 by
 Van Assche in
\cite
{Ismail}.
For a basic and inspiring introduction we refer to \cite{andrei_walter}. Multiple orthogonal polynomials have had many achievements, here we just mention a few: for asymptotic of zeros see \cite{Aptekarev_Kaliaguine_Lopez}, for a Gauss--Borel perspective see \cite{afm}, for Christoffel perturbations see \cite{bfm}, for applications to random matrix theory see \cite{Bleher_Kuijlaars}, for Brownian bridges, or non-intersecting Brownian motions, that leave from $ p$ points and arrive to $q$ points see \cite{Evi_Arno}, for Markov chains see \cite{Hipergeometricos}, \cite{JP}, for multicomponent Toda, cf. \cite{adler,afm}, for applications to number theory see \cite{Apery,Ball_Rivoal,Zudilin}, 
for the spectral analysis of high order difference operators see \cite{Kalyagin,Kaliaguine,KaliaguineII,Aptekarev_Kaliaguine_Saff}. For applications to planar orthogonality  see \cite{Lee} and \cite{Bererzin_Kuijlaars_Parra}, in this last one type I multiple orthogonal polynomials on contours appear. See also \cite{PBF_1} and \cite{PBF_2} for spectral Favard theorems and the existence of systems positive Lebesgue--Stieltjes measures, associated with multiple orthogonal polynomials, for banded Hessenberg matrices and banded matrices, respectively, admitting a positive bidiagonal factorization. 
\enlargethispage{1cm}

\subsection{The Askey scheme}
The Askey scheme \cite{Askey} establishes a list of connections between hypergeometric orthogonal polynomials' families. From the scalar Hahn polynomials
\begin{align*}
\label{Hahnescalar}
 Q_n(x,\alpha,\beta,N)\equiv \dfrac{(\alpha+1)_n(-N)_n}{(\alpha+\beta+n+1)_n}\pFq{3}{2}{-n,-x, \alpha+\beta+n+1}{-N, \alpha+1}{1}
\end{align*}
there can be obtained all the following families through limit relations.

\begin{center}
  \begin{tikzpicture}[node distance=2cm]
  
 \node[fill=TealBlue!15,block] (a) {Jacobi}; 
 \node[block, fill=Peach!15,right of=a] (b) {Meixner}; 
 \node[fill=Peach!15,block,right of=b] (c) {Kravchuk};
 \node[block, fill=Peach!25,above of = b] (d) {Hahn};
 \node[fill=TealBlue!15,block, below of = a] (e) {Laguerre}; 
 \node[fill=TealBlue!15,block,right of=e] (f) {Hermite}; 
 \node[block,fill=Peach!15,right of=f] (g) {Charlier}; 
 \draw[-latex] (d)--(a); 
 \draw[-latex] (d)--(b); 
 \draw[-latex] (d)--(c);
 \draw[-latex] (a)--(e); 
 \draw[-latex] (a)--(f); 
 \draw[-latex] (b)--(e);
 \draw[-latex] (b)--(g);
 \draw[-latex] (c)--(f);
 \draw[-latex] (c)--(g);
 \draw[-latex] (e)--(f);
 \draw[-latex] (g)--(f);
\draw (2,-3) node[below] {\textbf{Askey Scheme}};
\end{tikzpicture} 
\end{center}
The Jacobi, Laguerre and Hermite polynomials are continuous, so they satisfy orthogonality relations of the form
\begin{equation*} \int_{\Delta}p_n(x)p_m(x)\omega(x)\d\mu(x)\propto\delta_{nm}
\end{equation*}
with respect to some weight function
$\omega:\Delta\subseteq\mathbb R\rightarrow\mathbb R$ and some measure $\mu:\Delta\subseteq\mathbb R\rightarrow\mathbb R^+$.
On the other hand, the Hahn, Meixner, Kravchuk and Charlier polynomials are discrete families. So they satisfy orthogonality relations of the form
\begin{align*}
 \sum_{k\in\Delta}p_n(k)p_m(k)\omega(k)\propto\delta_{mn}
\end{align*}
with respect to some weight function $\omega:\Delta\subseteq\mathbb Z\rightarrow\mathbb R$.

In \cite{Clasicos}, limit relations between Jacobi, Laguerre and Hermite families have been extended to the multiple case for type II polynomials. In \cite{Arvesu}, it has been done the same for the Hahn, Meixner, Kravchuk and Charlier families. In \cite{AskeyII} the Askey scheme has been almost completed for the type II multiple orthogonal polynomials. The main aim here is to get a scheme like the Askey one we have presented, for the type I multiple orthogonal polynomials.
We will construct explicit multiple hypergeometric series for the eight families of type I multiple orthogonal polynomials in the following list
\begin{enumerate}
 \item Hann
 \item Jacobi--Piñeiro
 \item Meixner I
 \item Meixner II
 \item Kravchuk
 \item Laguerre I
 \item Laguerre II
 \item Charlier
\end{enumerate}
and, as usual, we fail with multiple Hermite polynomials. Thus, we get for the type I situation the following Askey scheme diagram
\begin{center}
 \begin{tikzpicture}[node distance=3cm]
  \node[fill=TealBlue!15,block] (a) {Jacobi--Piñeiro}; 
  \node[ fill=Peach!15,block, right of=a] (b) {Meixner II};
  \node[ fill=Peach!15,block, right of = b] (c) {Meixner I};
  \node[ fill=Peach!15,block, right of=c] (f) {Kravchuk}; 
  \node[ fill=Peach!25,block,above of = c] (d) {Hahn};
  \node[fill=TealBlue!15,block, below of = a] (e) {Laguerre I};
  \node[fill=TealBlue!15,block, right of = e] (g) {Laguerre II};
  \node[ fill=Peach!15,block, below of =f] (i) {Charlier};
  \draw[-latex] (d)--(a); 
  \draw[-latex] (d)--(b);
  \draw[-latex] (d)--(c);
  \draw[-latex] (d)--(f);
  \draw[-latex] (a)--(e);
  \draw[-latex] (b)--(e);
  \draw[-latex] (c)--(i);
  \draw[-latex] (c)--(g);
  \draw[-latex] (f)--(i);
  \draw (5,-4) node[below] {\textbf{Multiple Askey Scheme}};
 \end{tikzpicture} 
\end{center}

\subsection{Generalizations of the hypergeometric series}
All of the scalar polynomials in the previous Askey scheme are given in terms of generalized hypergeometric functions \cite{LibrodeHypergeom, Srivastava_Karlsson,LibrodeKF}
\begin{align}
 \label{Hypergeom}
 \pFq{p}{q}{a_1,\dots,a_p}{b_1,\dots,b_q}{x}\coloneq\sum_{l=0}^{\infty}\dfrac{(a_1)_l\cdots(a_p)_l}{(b_1)_l\cdots(b_q)_l}\dfrac{x^l}{l!}.
\end{align}
In this paper we give explicit expressions of different families of multiple orthogonal polynomials of type I. Instead of these generalized hypergeometric series, the multiple orthogonal polynomials are frequently given in terms of Kampé de Fériet series \cite{LibrodeKF,Srivastava_Karlsson}
\begin{multline}
\label{KF}
 \KF{p:r;s}{q:n;k}{(a_1,\dots,a_p):(b_1,\dots,b_r);(c_1,\dots,c_s)}{(\alpha_1,\dots,\alpha_q):(\beta_1,\dots,\beta_n);(\gamma_1,\dots,\gamma_k)}{x,y}\\
 \coloneq\sum_{l=0}^{\infty}\sum_{m=0}^{\infty}\dfrac{(a_1)_{l+m}\cdots(a_p)_{l+m}}{(\alpha_1)_{l+m}\cdots(\alpha_q)_{l+m}}\dfrac{(b_1)_l\cdots(b_r)_l}{(\beta_1)_l\cdots(\beta_n)_l}\dfrac{(c_1)_m\cdots(c_s)_m}{(\gamma_1)_m\cdots(\gamma_k)_m}\dfrac{x^l}{l!}\dfrac{y^m}{m!}.
\end{multline}
Generalized hypergeometric and Kampé de Feriet hypergeometric series depend on the Pochhammer symbols
\begin{align*}
 (x)_n\coloneq\dfrac{\Gamma(x+n)}{\Gamma(x)}=\begin{cases}
  x(x+1)\cdots(x+n-1), &n\in\N,\\
  1, & n=0,
 \end{cases}
\end{align*}
which satisfy the following important property
\begin{align}
 \label{enteroneg}
 (-m)_n& =0, & m&<n, & m&\in\mathbb N_0,
\end{align}
This implies that if any of the $a_1,\dots,a_p$ in Equations \eqref{Hypergeom} and \eqref{KF} is a negative integer, then the series truncations and are finite summations. This is what happens in all the polynomials we are going to study later.

One important result concerning generalized hypergeometric series that will be useful is the following theorem, cf. \cite[Theorem 2.2]{KP}
\begin{teo}[Karp and Prilepkina]
\label{Karp}
Let be $r\in\mathbb N_0$, $a,b,f_1,\dots,f_r\in\mathbb C$, $p\in\mathbb N$, $m_1,\dots,m_r\in\mathbb N_0$. If $\Re{(p-a-m_1-\cdots-m_r)}>0$, then
\begin{multline*}
 \pFq{r+2}{r+1}{a,b,f_1+m_1,\dots,f_r+m_r}{b+p,f_1,\dots,f_r}{1}=\dfrac{\Gamma(1-a)\Gamma(b+p)}{(p-1)!\Gamma(b-a+1)}\dfrac{(f_1-b)_{m_1}\cdots(f_r-b)_{m_r}}{(f_1)_{m_1}\cdots(f_r)_{m_r}}\\
 \times\pFq{r+2}{r+1}{-p+1,b,-f_1+b+1,\dots,-f_r+b+1}{b-a+1,-f_1+b+1-m_1,\dots,-f_r+b+1-m_r}{1}.
\end{multline*}
\end{teo}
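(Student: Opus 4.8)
The plan is to remove the denominator parameter $b+p$ (paired with the numerator $b$) by an Euler--Beta integral and thereby reduce the left-hand side to an integral over $[0,1]$ of a Karlsson--Minton-type ${}_{r+1}F_r$. Using $\frac{(b)_l}{(b+p)_l}=\frac{\Gamma(b+p)}{\Gamma(b)\Gamma(p)}\int_0^1 t^{b+l-1}(1-t)^{p-1}\,dt$, valid for $\Re b>0$ and $p\in\N$, and interchanging sum and integral, the left-hand side becomes $\frac{\Gamma(b+p)}{\Gamma(b)\Gamma(p)}\int_0^1 t^{b-1}(1-t)^{p-1}\,G(t)\,dt$, where $G(t)$ is the ${}_{r+1}F_r$ obtained from the left-hand series by deleting the pair $(b,b+p)$ and replacing the argument $1$ by $t$; the general dependence on $b$ then follows by analytic continuation.

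Next I would put $G$ into Euler form. Since $\frac{(f_i+m_i)_l}{(f_i)_l}=\frac{(f_i+l)_{m_i}}{(f_i)_{m_i}}$ is, up to the constant $\prod_i (f_i)_{m_i}$, a polynomial in $l$ of degree $M:=m_1+\cdots+m_r$, I would expand $\prod_i(f_i+l)_{m_i}$ in the falling factorials $l(l-1)\cdots(l-k+1)$ and apply $\sum_{l\ge k}\frac{(a)_l}{(l-k)!}\,t^l=(a)_k\,t^k(1-t)^{-a-k}$. This yields a representation $G(t)=(1-t)^{-a}\,Q\!\big(\tfrac{t}{1-t}\big)$ with $Q$ an explicit polynomial of degree $\le M$; this is the multivariate analogue of Euler's transformation for series of Karlsson--Minton type.

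The substitution $u=\tfrac{t}{1-t}$ then carries $[0,1]$ onto $[0,\infty)$ and transforms the integral into $\frac{\Gamma(b+p)}{\Gamma(b)\Gamma(p)}\int_0^\infty u^{b-1}(1+u)^{a-b-p}Q(u)\,du$. Expanding $Q$ in powers of $u$ and integrating term by term against the kernel, using $\int_0^\infty u^{b+k-1}(1+u)^{a-b-p}\,du=\frac{\Gamma(b+k)\Gamma(p-a-k)}{\Gamma(b+p-a)}$ for $0\le k\le M$, produces a finite sum of Gamma quotients. Here the hypothesis $\Re(p-a-m_1-\cdots-m_r)>0$ enters decisively: it is exactly the condition $\Re(p-a-k)>0$ for every $k\le M$, which both guarantees convergence of all these integrals and legitimises the interchange of summation and integration.

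Finally I would reassemble the Gamma factors and identify the resulting finite sum with the terminating ${}_{r+2}F_{r+1}$ on the right-hand side, obtained under the reflection $f_i\mapsto b+1-f_i$ of the free parameters (the shifts $m_i$ being preserved), while checking that the scalar prefactor collapses to $\frac{\Gamma(1-a)\Gamma(b+p)}{(p-1)!\,\Gamma(b-a+1)}\prod_i\frac{(f_i-b)_{m_i}}{(f_i)_{m_i}}$. I expect this identification to be the principal obstacle: the closed form coming out of the integral is a sum of length $M+1$, whereas the target series has length $p$, so matching them requires reversing the order of summation in the terminating series (equivalently, a Thomae-type reduction resting on Chu--Vandermonde) together with careful bookkeeping of the reflected Pochhammer symbols $(f_i-b)_{m_i}$. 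As a consistency check, when $r=0$ the scheme degenerates to Gauss's evaluation of ${}_2F_1(1)$ on the left and the Chu--Vandermonde evaluation of the terminating ${}_2F_1(1)$ on the right; these agree and pin down the normalisation of the prefactor.
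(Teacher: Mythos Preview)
The paper does not contain a proof of this theorem at all: Theorem~\ref{Karp} is quoted verbatim from Karp and Prilepkina (reference \cite{KP}, Theorem~2.2) and is used only as a black box inside the proof of Proposition~\ref{laprop}. There is therefore no ``paper's own proof'' to compare your attempt against.

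As for your sketch itself, the overall strategy---removing the $(b,b+p)$ pair by an Euler--Beta integral, recognising the remaining ${}_{r+1}F_r$ as Karlsson--Minton type so that its Euler transform is $(1-t)^{-a}$ times a polynomial in $t/(1-t)$, and then integrating termwise---is a standard and viable route to identities of this kind. You have correctly identified where the real work lies: the integral produces a sum of length $M+1=m_1+\cdots+m_r+1$, whereas the target terminating ${}_{r+2}F_{r+1}$ has $p$ terms, and reconciling these two finite representations (via reversal of a terminating series and the reflected Pochhammer bookkeeping) is exactly the step you have not carried out. Until that identification is written down explicitly, what you have is a plausible plan rather than a proof; if you want to complete it, consulting \cite{KP} directly would show you how Karp and Prilepkina close that gap.
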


\subsection{Multiple orthogonal polynomials}

Here we follow Van Assche in \cite[Chapter 23]{Ismail}, and we assume from now on that the weights $(\omega_1,\omega_2)$ are an algebraic Chebyshev (AT) system and, consequently,  any index $(n_1,n_2)\in\N_0^2$  is normal and the system of weights $(\omega_1,\omega_2)$ is perfect. It is worthy to say that the systems of measures that we consider in this paper are known to be AT systems (cf. \cite{Aptekarev,Arvesu}).

 For the discrete multiple orthogonality, given a multi-index $(n_1,n_2)$ and two weight functions $\omega_1,\omega_2:\Delta\subseteq\Z\rightarrow\R$ we 
consider three polynomial $B_{(n_1,n_2)}$ with $\deg{B_{(n_1,n_2)}\leq n_1+n_2}$,
$\big\{A_{(n_1,n_2)}^{(1)}, A_{(n_1,n_2)}^{(2)}\big\}$ 
with $\deg{A_{(n_1,n_2)}^{(a)}\leq n_a-1}$,
and corresponding linear forms
\begin{align*}
\mathscr  Q_{(n_1,n_2)}\coloneq A_{(n_1,n_2)}^{(1)}(k)\omega_1(k)+A_{(n_1,n_2)}^{(2)}(k)\omega_2(k),
\end{align*}
 satisfying discrete orthogonality relations of the form
\begin{align}
 \label{ortdisII}
 \sum_{k\in\Delta}(-k)_jB_{(n_1,n_2)}(k)\omega_a(k)&=0, & j&\in\{0,\dots,n_a-1\},& a&\in\{1,2\}, \\
 \label{ortdiscI}
 \sum_{k\in\Delta}(-k)_j\mathscr Q_{(n_1,n_2)}(k)&=0, & j&\in\{0,\dots,n_1+n_2-2\}.
\end{align}
Analogously, for the continuous multiple orthogonality, given the couple $(n_1,n_2)$,
two weight functions $\omega_1,\omega_2:\Delta\subseteq\mathbb R\rightarrow\mathbb R$, and a measure $\mu:\Delta\subseteq\mathbb R\rightarrow\mathbb R^+$, multiple orthogonality reads
\begin{align}
 \label{ortcontII}
 \int_{\Delta}x^jB_{(n_1,n_2)}(x)\omega_a(x)\d\mu(x)&=0, & j&\in\{0,\dots,n_a-1\},& a&\in\{1,2\}, \\
 \label{ortcontI}
 \int_{\Delta}x^j\mathscr Q_{(n_1,n_2)}(x)\d\mu(x)&=0, & j&\in\{0,\dots,n_1+n_2-2\}.
\end{align}
If this happens, the polynomials $B_{(n_1,n_2)}(x)$ and $\big\{ A^{(1)}_{(n_1,n_2)}(x), A^{(2)}_{(n_1,n_2)}(x)\big\}
$ are said to be the multiple orthogonal polynomials of type II and type I respectively, respect to the given system of weight functions and measure.

Type I and II multiple orthogonal polynomials fulfill extended type biorthogonality relations of the form:
\begin{align}
 \label{biortogonalidaddisc}
 &\sum_{k\in\Delta}\mathscr Q_{(n_1,n_2)}(k)B_{(m_1,m_2)}(k)=
 \begin{cases}
  0,&\text{if $n_1\leq m_1, n_2\leq m_2$,}\\
  1,&\text{if $m_1+m_2=n_1+n_2-1$,}\\
  0,&\text{if $m_1+m_2\leq n_1+n_2-2$,}
 \end{cases}
\end{align}
in the discrete case and
\begin{align}
 \label{biortogonalidadcont}
 &\int_{\Delta}\mathscr Q_{(n_1,n_2)}(x)B_{(m_1,m_2)}(x)\d\mu(x)=
 \begin{cases}
 0,&\text{if $n_1\leq m_1, n_2\leq m_2$,}\\
 1,&\text{if $m_1+m_2=n_1+n_2-1$,}\\
 0,&\text{if $m_1+m_2\leq n_1+n_2-2$,}
\end{cases}
\end{align}
in the continuous case.

\begin{defi}[Near neighbor recursion coefficients]\label{def:near_neighbors_recursion_coeffcients}
Following \cite[Equations (23.1.13) and (23.1.14)]{Ismail}, let us introduce the following near neighbor recursion coefficients:
 \begin{gather*}
  \begin{aligned}
   b^{(1)}{(n_1,n_2)}&\coloneq\int x B_{(n_1,n_2)}(x)\mathscr Q_{(n_1+1,n_2)}(x)\d\mu(x),&
   b^{(2)}{(n_1,n_2)}&\coloneq\int x B_{(n_1,n_2)}(x)\mathscr Q_{(n_1,n_2+1)}(x)\d\mu(x),
  \end{aligned}\\
  c(n_1,n_2)\coloneq\int x B_{(n_1,n_2)}(x)\mathscr Q_{(n_1,n_2)}(x)\d\mu(x),\\
  \begin{aligned}
   d^{(1)}(n_1,n_2)&\coloneq\int x B_{(n_1,n_2)}(x)\mathscr Q_{(n_1-1,n_2)}(x)\d\mu(x),&
   d^{(2)}(n_1,n_2)&\coloneq\int x B_{(n_1,n_2)}(x)\mathscr Q_{(n_1,n_2-1)}(x)\d\mu(x).
  \end{aligned}
 \end{gather*}
\end{defi}
\begin{rem}
 Observe that the interchange of weights $\omega_1 \leftrightarrow\omega_2$ corresponds
 to $n_1 \leftrightarrow n_2$, $b^{(1)} \leftrightarrow b^{(2)}$ and $d^{(1)} \leftrightarrow d^{(2)}$ .
\end{rem}

We can restate \cite[Theorem 23.1.7]{Ismail} to our particular situation:

\begin{teo}[Type II near neighbor recursion relations]\label{Th:Recursion_Relations_II}
For perfect systems of weights the type II multiple orthogonal polynomials are subject to the following near neighbor recursion relations
\begin{align*}
xB_{(n_1,n_2)}&= B_{(n_1+1,n_2)}+b^{(1)}{(n_1,n_2)}B_{(n_1,n_2)}+c(n_1,n_2)B_{(n_1-1,n_2)}+d^{(1)}(n_1,n_2)B_{(n_1-1,n_2-1)},\\
xB_{(n_1,n_2)}&= B_{(n_1,n_2+1)}+ b^{(2)}{(n_1,n_2)}B_{(n_1,n_2)}+ c(n_1,n_2)B_{(n_1-1,n_2)}+ d^{(1)}(n_1,n_2)B_{(n_1-1,n_2-1)},\\
xB_{(n_1,n_2)}&= B_{(n_1+1,n_2)}+ b^{(1)}{(n_1,n_2)}B_{(n_1,n_2)}+ c(n_1,n_2)B_{(n_1,n_2-1)}+ d^{(2)}(n_1,n_2)B_{(n_1-1,n_2-1)},\\
xB_{(n_1,n_2)}&= B_{(n_1,n_2+1)}+b^{(2)}{(n_1,n_2)}B_{(n_1,n_2)}+c(n_1,n_2)B_{(n_1,n_2-1)}+d^{(2)}(n_1,n_2)B_{(n_1-1,n_2-1)},\\
\end{align*}
In particular,
\begin{align*}
 B_{(n_1+1,n_2)}- B_{(n_1,n_2+1)}=\big( b^{(2)}{(n_1,n_2)}-b^{(1)}{(n_1,n_2)}\big)B_{(n_1,n_2)}.
\end{align*}
\end{teo}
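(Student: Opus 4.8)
The plan is to expand $xB_{(n_1,n_2)}$ directly in a basis of type II polynomials built from the four near neighbors occurring on the right-hand side, and then to read off the four coefficients one at a time from the biorthogonality relation \eqref{biortogonalidadcont} together with Definition \ref{def:near_neighbors_recursion_coeffcients}. First I would record the orthogonality inherited by $xB_{(n_1,n_2)}$: shifting $j\mapsto j+1$ in \eqref{ortcontII} gives $\int_{\Delta}x^{j}\,xB_{(n_1,n_2)}(x)\,\omega_a(x)\d\mu(x)=0$ for $j\in\{0,\dots,n_a-2\}$ and $a\in\{1,2\}$. Let $V$ be the space of polynomials of degree at most $n_1+n_2+1$ annihilated by these $n_1+n_2-2$ linear functionals. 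A dimension count gives $\dim V\ge 4$, and this is where perfectness of $(\omega_1,\omega_2)$ enters: the AT hypothesis makes the functionals independent, so in fact $\dim V=4$.

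Next I would exhibit a basis of $V$. Reading \eqref{ortcontII} for the relevant indices shows that each of $B_{(n_1+1,n_2)}$, $B_{(n_1,n_2)}$, $B_{(n_1-1,n_2)}$, $B_{(n_1-1,n_2-1)}$ already satisfies the defining relations of $V$; since they have the pairwise distinct degrees $n_1+n_2+1$, $n_1+n_2$, $n_1+n_2-1$, $n_1+n_2-2$, they are linearly independent and hence form a basis. As $xB_{(n_1,n_2)}\in V$, it is a unique combination of these four polynomials. Comparing leading coefficients (both $xB_{(n_1,n_2)}$ and $B_{(n_1+1,n_2)}$ are monic of degree $n_1+n_2+1$) fixes the coefficient of $B_{(n_1+1,n_2)}$ to be $1$. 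For the other three I pair the expansion against $\mathscr Q_{(n_1+1,n_2)}$, $\mathscr Q_{(n_1,n_2)}$ and $\mathscr Q_{(n_1-1,n_2)}$ and invoke \eqref{biortogonalidadcont}. In each pairing every neighbor of level $\ge$ that of the chosen form dominates its index componentwise and so vanishes by the first case of \eqref{biortogonalidadcont}, the unique neighbor of level exactly one lower survives with value $1$ through the middle case, and the still lower neighbors vanish through the last case. The three surviving values are precisely $\int xB_{(n_1,n_2)}\mathscr Q_{(n_1+1,n_2)}\d\mu=b^{(1)}(n_1,n_2)$, $\int xB_{(n_1,n_2)}\mathscr Q_{(n_1,n_2)}\d\mu=c(n_1,n_2)$ and $\int xB_{(n_1,n_2)}\mathscr Q_{(n_1-1,n_2)}\d\mu=d^{(1)}(n_1,n_2)$ of Definition \ref{def:near_neighbors_recursion_coeffcients}, which establishes the first recurrence.

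The remaining three recurrences follow from the same two moves applied to the other admissible bases of the very same space $V$: replacing the top neighbor $B_{(n_1+1,n_2)}$ by $B_{(n_1,n_2+1)}$, or the subdiagonal pair $B_{(n_1-1,n_2)}$, $B_{(n_1-1,n_2-1)}$ by $B_{(n_1,n_2-1)}$, $B_{(n_1-1,n_2-1)}$, again gives four polynomials of distinct degrees lying in $V$, and the corresponding pairings (now against $\mathscr Q_{(n_1,n_2+1)}$, respectively $\mathscr Q_{(n_1,n_2-1)}$) isolate $b^{(2)}(n_1,n_2)$ and $d^{(2)}(n_1,n_2)$. The final identity is then immediate: subtracting the first recurrence from the second cancels $xB_{(n_1,n_2)}$ together with the two shared lower terms $c(n_1,n_2)B_{(n_1-1,n_2)}+d^{(1)}(n_1,n_2)B_{(n_1-1,n_2-1)}$, leaving $B_{(n_1+1,n_2)}-B_{(n_1,n_2+1)}=\big(b^{(2)}(n_1,n_2)-b^{(1)}(n_1,n_2)\big)B_{(n_1,n_2)}$.

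I expect the only genuine obstacle to be the equality $\dim V=4$, that is, the linear independence of the shifted orthogonality conditions; this is exactly what perfectness supplies, and without it some neighbor could fail to attain its nominal degree and the basis argument would break down. A secondary point is the boundary behaviour at $n_1=0$ or $n_2=0$, handled by the usual convention that $B_{(m_1,m_2)}=0$ and the matching recursion coefficient vanishes whenever an index is negative. Everything else reduces to the mechanical leading-coefficient comparison and the three biorthogonality pairings, so the heart of the proof is the structural statement that $xB_{(n_1,n_2)}$ is forced to live in the four-dimensional near-neighbor space.
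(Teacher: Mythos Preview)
Your argument is correct and is essentially the standard proof of this result. Note, however, that the paper does not give its own proof of Theorem~\ref{Th:Recursion_Relations_II}: it is stated as a restatement of \cite[Theorem~23.1.7]{Ismail} for the two-weight setting, and the proof is deferred to that reference. Your approach---showing that $xB_{(n_1,n_2)}$ lies in the four-dimensional space $V$ cut out by the shifted orthogonality conditions, exhibiting the four near neighbors as a basis of $V$ via their distinct degrees, and then extracting the coefficients through the biorthogonality relations \eqref{biortogonalidadcont} and Definition~\ref{def:near_neighbors_recursion_coeffcients}---is precisely the argument given in Ismail's book, so there is no genuine methodological difference to report.

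One small sharpening: the equality $\dim V=4$ that you flag as the crux is equivalent to the statement that no nonzero polynomial of degree at most $n_1+n_2-3$ lies in $V$, which is exactly normality of the index $(n_1-1,n_2-1)$; perfectness guarantees this, as you say. Your handling of the boundary cases $n_1=0$ or $n_2=0$ via the convention $B_{(m_1,m_2)}=0$ for negative indices is also the standard one.
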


Similarly, for the linear forms of orthogonal polynomials of type I from the application of \cite[Theorem 23.1.9, Equations (23.1.17) and (23.1.18)]{Ismail} leads to:
\begin{teo}[Type I near neighbor recursion relations]\label{Th:Recursion_Relations_I}
 For perfect systems of weights, the linear forms and type I multiple orthogonal polynomials fulfill the following near neighbor recursion relations
\begin{align}\label{eq:Recursion_Relations_I.1}
 x\mathscr Q_{(n_1,n_2)}&= \mathscr Q_{(n_1-1,n_2)}+b^{(1)}{(n_1-1,n_2)}\mathscr Q_{(n_1,n_2)}+c(n_1,n_2)\mathscr Q_{(n_1+1,n_2)}+d^{(1)}(n_1+1,n_2)\mathscr Q_{(n_1+1,n_2+1)},\\\label{eq:Recursion_Relations_I.2}
  x\mathscr Q_{(n_1,n_2)}&= \mathscr Q_{(n_1,n_2-1)}+b^{(2)}{(n_1,n_2-1)}\mathscr Q_{(n_1,n_2)}+c(n_1,n_2)\mathscr Q_{(n_1+1,n_2)}+d^{(1)}(n_1+1,n_2)\mathscr Q_{(n_1+1,n_2+1)},\\\label{eq:Recursion_Relations_I.3}
  x\mathscr Q_{(n_1,n_2)}&= \mathscr Q_{(n_1-1,n_2)}+b^{(1)}{(n_1-1,n_2)}\mathscr Q_{(n_1,n_2)}+c(n_1,n_2)\mathscr Q_{(n_1,n_2+1)}+d^{(2)}(n_1,n_2+1)\mathscr Q_{(n_1+1,n_2+1)},\\ \label{eq:Recursion_Relations_I.4}
   x\mathscr Q_{(n_1,n_2)}&= \mathscr Q_{(n_1,n_2-1)}+b^{(2)}{(n_1,n_2-1)}\mathscr Q_{(n_1,n_2)}+c(n_1,n_2)\mathscr Q_{(n_1,n_2+1)}+d^{(2)}(n_1,n_2+1)\mathscr Q_{(n_1+1,n_2+1)}.
\end{align}
For the orthogonal polynomials of type I we have the same relations, cf. \cite[Equation (23.1.20)]{Ismail}: 
\begin{align*}
 xA^{(a)}_{(n_1,n_2)}&= A^{(a)}_{(n_1-1,n_2)}+b^{(1)}{(n_1-1,n_2)}A^{(a)}_{(n_1,n_2)}+c(n_1,n_2)A^{(a)}_{(n_1+1,n_2)}+d^{(1)}(n_1+1,n_2)A^{(a)}_{(n_1+1,n_2+1)},\\
 xA^{(a)}_{(n_1,n_2)}&= A^{(a)}_{(n_1,n_2-1)}+b^{(2)}{(n_1,n_2-1)}A^{(a)}_{(n_1,n_2)}+c(n_1,n_2)A^{(a)}_{(n_1+1,n_2)}+d^{(1)}(n_1+1,n_2)A^{(a)}_{(n_1+1,n_2+1)},\\
 xA^{(a)}_{(n_1,n_2)}&= A^{(a)}_{(n_1-1,n_2)}+b^{(1)}{(n_1-1,n_2)}A^{(a)}_{(n_1,n_2)}+c(n_1,n_2)A^{(a)}_{(n_1,n_2+1)}+d^{(2)}(n_1,n_2+1)A^{(a)}_{(n_1+1,n_2+1)},\\
 xA^{(a)}_{(n_1,n_2)}&= A^{(a)}_{(n_1,n_2-1)}+b^{(2)}{(n_1,n_2-1)}A^{(a)}_{(n_1,n_2)}+c(n_1,n_2)A^{(a)}_{(n_1,n_2+1)}+d^{(2)}(n_1,n_2+1)A^{(a)}_{(n_1+1,n_2+1)},
\end{align*}
for $a\in\{1,2\}$.
In particular,
\begin{align}\label{eq:connection}
 \mathscr Q_{(n_1-1,n_2)}- \mathscr Q_{(n_1,n_2-1)}&=\big(b^{(2)}{(n_1,n_2-1)}-b^{(1)}{(n_1-1,n_2)}\big)\mathscr Q_{(n_1,n_2)},\\
\notag A^{(a)}_{(n_1-1,n_2)}- A^{(a)}_{(n_1,n_2-1)}&=\big(b^{(2)}{(n_1,n_2-1)}-b^{(1)}{(n_1-1,n_2)}\big)A^{(a)}_{(n_1,n_2)}.
\end{align}
\end{teo}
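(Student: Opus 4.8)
The plan is to read all four relations \eqref{eq:Recursion_Relations_I.1}--\eqref{eq:Recursion_Relations_I.4} as the ``transpose'' of the type II recursion of Theorem~\ref{Th:Recursion_Relations_II} under the bilinear pairing $(\mathscr Q,B)\mapsto\int\mathscr Q\,B\,\d\mu$ (respectively $\sum_{k\in\Delta}\mathscr Q(k)B(k)$ in the discrete case), which is made non-degenerate by the biorthogonality \eqref{biortogonalidadcont} (resp.\ \eqref{biortogonalidaddisc}). I will argue \eqref{eq:Recursion_Relations_I.1} in full; the relations \eqref{eq:Recursion_Relations_I.2}--\eqref{eq:Recursion_Relations_I.4} then follow by running the identical computation against the remaining three type II recursions of Theorem~\ref{Th:Recursion_Relations_II}, together with the weight interchange $\omega_1\leftrightarrow\omega_2$ recorded in the Remark following Definition~\ref{def:near_neighbors_recursion_coeffcients}. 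The scheme is: show that the difference of the two sides is a linear form of controlled degree, reduce its vanishing to a finite list of moment conditions via the AT hypothesis, and check those conditions using the orthogonality of the forms and the very definitions of $b^{(1)},c,d^{(1)}$.

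First I would do the degree bookkeeping. Since $\deg A^{(a)}_{(n_1,n_2)}\le n_a-1$, the form $x\mathscr Q_{(n_1,n_2)}=xA^{(1)}_{(n_1,n_2)}\omega_1+xA^{(2)}_{(n_1,n_2)}\omega_2$ has polynomial parts of degrees $\le n_1$ and $\le n_2$, and one checks that each summand on the right-hand side of \eqref{eq:Recursion_Relations_I.1} obeys the same bound, the extreme term being $\mathscr Q_{(n_1+1,n_2+1)}$. Hence the difference $L\coloneq x\mathscr Q_{(n_1,n_2)}-(\text{RHS of }\eqref{eq:Recursion_Relations_I.1})$ is a linear form $L=L^{(1)}\omega_1+L^{(2)}\omega_2$ with $\deg L^{(a)}\le n_a$. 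Because $(\omega_1,\omega_2)$ is an AT system, the $n_1+n_2+2$ functions $x^i\omega_a$ ($0\le i\le n_a$, $a\in\{1,2\}$) form a Chebyshev system on $\Delta$; the associated moment matrix against $1,x,\dots,x^{n_1+n_2+1}$ is therefore nonsingular, so the map $L\mapsto\big(\int x^jL\,\d\mu\big)_{j=0}^{\,n_1+n_2+1}$ is injective. It thus suffices to prove that these $n_1+n_2+2$ moments of $L$ vanish, whence $L^{(1)}\equiv L^{(2)}\equiv0$.

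Next I would compute the moments. For $j\le n_1+n_2-2$ the quantity $\int x^jL\,\d\mu$ vanishes: the low moments are killed directly by \eqref{ortcontI}, and the first nonzero moments $\int x^{m_1+m_2-1}\mathscr Q_{(m_1,m_2)}\,\d\mu$ of the forms involved, which are fixed to $1$ by the normalization encoded in \eqref{biortogonalidadcont} (pairing with monic type II polynomials of total degree $m_1+m_2-1$), cancel in matched pairs. The three remaining moments, $j=n_1+n_2-1,\,n_1+n_2,\,n_1+n_2+1$, are the substance of the matter: expanding the monic $B_{(m_1,m_2)}$ in powers of $x$ and again using \eqref{ortcontI}, the conditions $\int x^jL\,\d\mu=0$ for these three values reduce exactly to $b^{(1)}(n_1-1,n_2)=\int xB_{(n_1-1,n_2)}\mathscr Q_{(n_1,n_2)}\,\d\mu$, $c(n_1,n_2)=\int xB_{(n_1,n_2)}\mathscr Q_{(n_1,n_2)}\,\d\mu$ and $d^{(1)}(n_1+1,n_2)=\int xB_{(n_1+1,n_2)}\mathscr Q_{(n_1,n_2)}\,\d\mu$, which are precisely the formulas of Definition~\ref{def:near_neighbors_recursion_coeffcients}. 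Thus all $n_1+n_2+2$ moments vanish and $L\equiv0$, proving \eqref{eq:Recursion_Relations_I.1}.

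Finally, the recursions for the components $A^{(a)}_{(n_1,n_2)}$ follow from those for the forms by separating the $\omega_1$- and $\omega_2$-parts: in an AT system a relation $P\omega_1+Q\omega_2\equiv0$ with $P,Q$ polynomials forces $P\equiv Q\equiv0$, so each identity of forms splits into the two scalar identities. The connection formulas \eqref{eq:connection} are then immediate, since \eqref{eq:Recursion_Relations_I.1} and \eqref{eq:Recursion_Relations_I.2} share both the left-hand side $x\mathscr Q_{(n_1,n_2)}$ and the two terms $c(n_1,n_2)\mathscr Q_{(n_1+1,n_2)}$ and $d^{(1)}(n_1+1,n_2)\mathscr Q_{(n_1+1,n_2+1)}$; subtracting them leaves exactly $\mathscr Q_{(n_1-1,n_2)}-\mathscr Q_{(n_1,n_2-1)}=\big(b^{(2)}(n_1,n_2-1)-b^{(1)}(n_1-1,n_2)\big)\mathscr Q_{(n_1,n_2)}$, and the same subtraction at the component level yields the identity for $A^{(a)}_{(n_1,n_2)}$. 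I expect the main obstacle to be the index bookkeeping in the top-moment step---keeping the shifted arguments of $b^{(1)},c,d^{(1)}$ aligned with the correct $B_{(m_1,m_2)}$ and confirming that exactly $n_1+n_2+2$ independent conditions are produced so that the AT injectivity genuinely applies---together with the degenerate boundary cases $n_1=0$ or $n_2=0$, where any form carrying a negative index is understood to be zero and must be checked separately. (Alternatively, the whole statement can be obtained by specializing \cite[Theorem 23.1.9]{Ismail} to $r=2$.)
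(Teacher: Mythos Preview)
The paper gives no proof of this theorem: it simply presents the statement as a restatement, for $r=2$, of \cite[Theorem 23.1.9, Equations (23.1.17)--(23.1.20)]{Ismail}. Your final parenthetical is therefore exactly the paper's ``proof''. The rest of your proposal goes further and sketches the standard duality argument that underlies Ismail's result: control the degrees of the difference $L$, invoke the AT hypothesis to reduce vanishing of $L$ to finitely many moment conditions, and verify those using type~I orthogonality together with Definition~\ref{def:near_neighbors_recursion_coeffcients}. That line is correct; the only delicate point (which you flag) is the bookkeeping in the top three moments, where sub-leading coefficients of the monic $B_{(m_1,m_2)}$ enter and must be shown to cancel in pairs --- they do, but the verification is a small computation you have not written out. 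A slightly cleaner variant is to pair $L$ directly against a basis of type~II polynomials and use the type~II recursion of Theorem~\ref{Th:Recursion_Relations_II} on $xB_{(m_1,m_2)}$, which makes the appearance of $b^{(1)},c,d^{(1)}$ immediate and avoids juggling sub-leading coefficients.
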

\begin{pro}\label{pro:recurrence_unicity_typeI} 
	Let us assume that the neighbor recursion relations Theorem \ref{Th:Recursion_Relations_I}  hold. Then 
$\mathscr Q_{(n_1,n_2)}$ is  uniquely defined by $\mathscr Q_{(1,0)}$ and $\mathscr Q_{(1,1)}$ and the near neighbor recursion coefficients.
\end{pro}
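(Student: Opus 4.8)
The plan is to show constructively that each linear form $\mathscr Q_{(n_1,n_2)}$ can be written as an explicit combination of $\mathscr Q_{(1,0)}$, $\mathscr Q_{(1,1)}$ and the near neighbor recursion coefficients, through an induction that first fills the two boundaries $n_1n_2=0$ and then the interior $n_1,n_2\ge 1$. Two elementary facts seed the construction: $\mathscr Q_{(0,0)}=0$, because both polynomials $A^{(a)}_{(0,0)}$ have degree $\le -1$, and, evaluating the connection relation \eqref{eq:connection} at $(n_1,n_2)=(1,1)$, one obtains $\mathscr Q_{(0,1)}=\mathscr Q_{(1,0)}+\big(b^{(2)}(1,0)-b^{(1)}(0,1)\big)\mathscr Q_{(1,1)}$, so the remaining degree-one form is already determined by the data.

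For the interior the connection relation does all the work. For $n_1,n_2\ge 1$ rewrite \eqref{eq:connection} as $\mathscr Q_{(n_1,n_2)}=\big(\mathscr Q_{(n_1-1,n_2)}-\mathscr Q_{(n_1,n_2-1)}\big)\big/\big(b^{(2)}(n_1,n_2-1)-b^{(1)}(n_1-1,n_2)\big)$, which expresses a form of total degree $N=n_1+n_2$ through two forms of total degree $N-1$. Hence, once all forms of degree $N-1$ are known, every interior form of degree $N$ is determined; the induction on $N$ is anchored at the given $\mathscr Q_{(1,1)}$.

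The crux is the boundary, which the connection relation cannot reach. Here the key observation is that along $n_2=0$ the four-term relation \eqref{eq:Recursion_Relations_I.1} degenerates to a genuine three-term recurrence. Indeed its $d$-coefficient is $d^{(1)}(n_1+1,0)=\int x\,B_{(n_1+1,0)}\mathscr Q_{(n_1,0)}\,\d\mu$, and since $\mathscr Q_{(n_1,0)}=A^{(1)}_{(n_1,0)}\omega_1$ with $\deg A^{(1)}_{(n_1,0)}\le n_1-1$, the integrand pairs $B_{(n_1+1,0)}$ with a polynomial of degree $\le n_1$ weighted by $\omega_1$, which vanishes by the type II orthogonality \eqref{ortcontII} (respectively \eqref{ortdisII} in the discrete case). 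Thus \eqref{eq:Recursion_Relations_I.1} collapses to $x\mathscr Q_{(n_1,0)}=\mathscr Q_{(n_1-1,0)}+b^{(1)}(n_1-1,0)\mathscr Q_{(n_1,0)}+c(n_1,0)\mathscr Q_{(n_1+1,0)}$, and solving for $\mathscr Q_{(n_1+1,0)}$ propagates the whole boundary $\{\mathscr Q_{(n_1,0)}\}_{n_1\ge 1}$ from the seeds $\mathscr Q_{(0,0)}=0$ and $\mathscr Q_{(1,0)}$. The symmetric computation, using \eqref{eq:Recursion_Relations_I.4} and the vanishing of $d^{(2)}(0,n_2+1)$, generates the boundary $\{\mathscr Q_{(0,n_2)}\}_{n_2\ge 1}$ from $\mathscr Q_{(0,0)}=0$ and the already-found $\mathscr Q_{(0,1)}$.

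Assembling the pieces gives the result: the two boundaries are generated first (each being a self-contained three-term recurrence in one index), and then the interior is filled by induction on the total degree through the connection relation; at every step the new form is a uniquely determined expression in previously constructed forms and the recursion coefficients, which is precisely the asserted unicity. The main obstacle, beyond spotting the boundary reduction above, is nondegeneracy: the construction divides by $c(n_1,0)$ and $c(0,n_2)$ on the boundaries and by $b^{(2)}(n_1,n_2-1)-b^{(1)}(n_1-1,n_2)$ in the interior, so one must know these coefficients are nonzero. For the perfect (AT) systems considered here this holds, these being exactly the nonvanishing raising coefficients of the associated banded recursion, and it is what guarantees that the forward solution is both possible and unique.
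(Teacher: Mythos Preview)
Your proof is correct but follows a genuinely different route from the paper's. The paper first generates the linear forms along the \emph{step-line} $\{(n,n),(n+1,n)\}_{n\ge 0}$ by using the full four-term recursions \eqref{eq:Recursion_Relations_I.2} and \eqref{eq:Recursion_Relations_I.3}, solving at each stage for the highest-index term $\mathscr Q_{(n+1,n+1)}$ or $\mathscr Q_{(n+2,n+1)}$; it then propagates outward to the other diagonals $n_2-n_1=m$ via the connection relation \eqref{eq:connection}, by induction on $|m|$. You instead first fill the \emph{coordinate axes} $n_1=0$ and $n_2=0$, exploiting the observation (not used in the paper) that $d^{(1)}(n_1+1,0)=d^{(2)}(0,n_2+1)=0$, so that the four-term relation collapses to a genuine three-term recurrence on each boundary; then you fill the interior by induction on the total degree $n_1+n_2$ through \eqref{eq:connection}.

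Both arguments rely on nondegeneracy of certain recursion coefficients in a perfect system: the paper implicitly needs $d^{(1)}(n+1,n)$, $d^{(2)}(n+1,n+1)\ne 0$ along the step-line and $b^{(2)}-b^{(1)}\ne 0$ off it, while you need $c(n_1,0)$, $c(0,n_2)\ne 0$ on the axes and the same $b^{(2)}-b^{(1)}\ne 0$ in the interior. Your route has the pleasant feature of reducing the boundary case to the scalar three-term recurrences of the individual weights, and you are more explicit than the paper about the division-by-coefficient issue; the paper's step-line route, on the other hand, is more aligned with how multiple orthogonal polynomials are naturally organized (the step-line being the sequence that produces the associated banded Hessenberg matrix).
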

\begin{proof}
Let us consider the  Equations \eqref{eq:Recursion_Relations_I.2} and \eqref{eq:Recursion_Relations_I.3}  and assume $ (n_1,n_2)=(n,n)$, and $(n_1,n_2) = (n+1,n)$, so that
  \begin{align*}
 x\mathscr Q_{(n,n)}&= \mathscr Q_{(n,n-1)}+b^{(2)}{(n,n-1)}\mathscr Q_{(n,n)}+c(n,n)\mathscr Q_{(n+1,n)}+d^{(1)}(n+1,n)\mathscr Q_{(n+1,n+1)}
\end{align*}
for $n\in\N$ and
\begin{align*}
  x\mathscr Q_{(n+1,n)}&= \mathscr Q_{(n,n)}+b^{(1)}{(n,n)}\mathscr Q_{(n+1,n)}+c(n+1,n)\mathscr Q_{(n+1,n+1)}+d^{(2)}(n+1,n+1)\mathscr Q_{(n+2,n+1)},
\end{align*}
for $ n \in\N_0$, where $Q_{(0,0)} = 0$. From these it follows  that $\mathscr Q_{(n,n)}$ and $ \mathscr Q_{(n+1,n)}$ are uniquely determined by $ \mathscr Q_{(1,0)}$ and  $\mathscr Q_{(1,1)} $ and recursion coefficients.

If we represent in the plane $Oxy$,  the indexes from the step line, i.e. $\big\{(n+1,n+1),(n+1,n)\big\}_{n=0}^\infty$,  belong  respectively to the lines $y-x =0$ and $y-x=-1$. In general, for any index $(n_1, n_2) \not = (0,0)$, such that $n_1, n_2 \geq 0$, there exist $m \in \mathbb{Z}$ such that belongs to $y-x = m$.
The statement of this proposition is equivalent to saying that 
$\mathscr Q_{(n_1,n_2)}$ is defined uniquely by $\mathscr Q_{(1,0)}$ and $\mathscr Q_{(1,1)}$ and recursion coefficients for any index $(n_1, n_2) \not = (0,0)$, such that $n_1, n_2 \geq 0$, belongs to $y-x = m$, for $m \in \mathbb{Z}$. See the figure.

	\begin{center}
	\begin{tikzpicture}
		\draw[Gray!50,thin] (0,0) grid (10,10);
		\foreach \j in {0,...,9}
		\draw[fill=Gray!30] (\j+1,\j+1) circle (3pt);
		\foreach \j in {0,...,9}	\draw[fill=Gray!60] (\j+1,\j) circle (3pt);
		\foreach \j in {1,...,9} \draw[blue,-latex,thick]  (\j,\j)--(\j+1,\j);
		\foreach \j in {0,...,9} \draw[red,-latex,thick]  (\j+1,\j)--(\j+1,\j+1);
		\draw[fill=Black] (9,4) circle (5pt) ;
		\draw[fill=Gray!50] (9,5) circle (5pt) ;	
		\draw[fill=Gray!30] (8,5) circle (5pt) ;
		\draw[->>,thick]  (9,4)--(9,5) ;  \draw[->>,thick] (9,4)--(8,5)  ;
		\draw[fill=Black] (4,9) circle (5pt) ;
		\draw[fill=Gray!30] (5,9) circle (5pt) ;	\draw[fill=Gray!50] (5,8) circle (5pt) ;
		\draw[->>,thick]  (4,9)--(5,9) ;  \draw[->>,thick] (4,9)--(5,8)  ;
		
		\foreach \j in {0,...,6}	\draw[fill=Gray!60] (\j+4,\j) circle (3pt);
		\foreach \j in {0,...,7}	\draw[fill=Gray!30] (\j+3,\j) circle (3pt);
		
		\foreach \j in {0,...,6}	\draw[fill=Gray!30] (\j,\j+4) circle (3pt);
		\foreach \j in {0,...,6}	\draw[fill=Gray!60] (\j,\j+3) circle (3pt);
		\draw[dashed] (0,0)--(10,10);
				\draw[dashed] (1,0)--(10,9);
		\draw[dashed] (3,0)--(10,7);
	\draw[dashed] (4,0)--(10,6);
		\draw[dashed] (0,3)--(7,10);
	\draw[dashed] (0,4)--(6,10);			
		
		\draw (5,-.6) node {\textbf { The step line in color and motions to get to the step line}};
		
	\end{tikzpicture}
\end{center}


Let us prove that for any index $(n_1, n_2) \in\N_0^2$, not the origin, $(n_1, n_2)  \not = (0,0)$,   that belongs to the line $y-x = -n$, for $n \in\N_0$, 
$\mathscr Q_{(n_1,n_2)}$ is defined uniquely by $\mathscr Q_{(1,0)}$ and $\mathscr Q_{(1,1)}$.  We will prove it by induction.
For $n = 0,1$, the step-line,  it is already  proven. Let us assume that it is proven for $n=k$ and prove it for $n=k+1$. 
For this aim we consider $\mathscr Q_{(n_1,n_2)}$ such that  $n_2 -n_1 =-(k+1)$. Using Equation \eqref{eq:connection}, the linear form $ \mathscr Q_{(n_1,n_2)} $ can be written in terms of  the linear forms $\mathscr Q_{(n_1, n_2+1)}$ and $\mathscr Q_{(n_1-1, n_2+1)}$ and recursion coefficients, and the corresponding indexes $(x,y)=(n_1, n_2+1),(n_1-1, n_2+1)$  satisfy  $y-x= ( n_2+1)-n_1= -k$ and
$y-x = (n_2+1)- (n_1-1) = -(k+1)+2 = -(k-1)$, respectively, and by the induction hypotheses we conclude that $\mathscr Q_{(n_1, n_2+1)}$ and $\mathscr Q_{(n_1-1, n_2+1)}$ are written in terms of $ \mathscr Q_{(1,0)}$ and  $\mathscr Q_{(1,1)} $ and, consequently, $ \mathscr Q_{(n_1,n_2)} $ is written in terms of $ \mathscr Q_{(1,0)}$ and  $\mathscr Q_{(1,1)} $ and  recursion coefficients.

Analogously, let us  prove that for any index $(n_1, n_2) \in\N_0^2$, not the origin, $(n_1, n_2)  \not = (0,0)$,   that belongs to the line  $y-x = n-1$, for $n \in\N$, the linear form $\mathscr Q_{(n_1,n_2)}$ is defined uniquely in terms of  $\mathscr Q_{(1,0)}$ and $\mathscr Q_{(1,1)}$ and recursion coefficients. Again we use induction. As it is already  proved for $n = 1$,  let us assume that it holds for $n=k$ and prove it for $n=k+1$. 
Then, if the linear form $\mathscr Q_{(n_1,n_2)}$ such that $n_2 -n_1 =(k+1) -1 = k$, using Equation \eqref{eq:connection}
we can write $\mathscr Q_{(n_1,n_2)}$ in terms of the linear forms  $\mathscr Q_{(n_1+1, n_2)}$ and $\mathscr Q_{(n_1+1, n_2-1)}$, and the corresponding indexes  $ (x,y)=(n_1+1, n_2),(n_1+1, n_2-1)$ satisfy  $y-x = n_2 - (n_1+1 ) = k-1  $.

The figure above illustrates this procedure graphically. From the two previous  paragraphs we see that  the Proposition holds. 
\end{proof}

\section{Hahn multiple orthogonal polynomials of type I }

For $a\in\{1,2\}$ and $N\in\N_0$, let us consider the following weight functions
\begin{align}
 \label{pesosHahn}
 \omega_a(x,\alpha_a,\beta,N)=\dfrac{\Gamma(\alpha_a+x+1)}{\Gamma(\alpha_a+1)\Gamma(x+1)}\frac{\Gamma(\beta+N-x+1)}{\Gamma(\beta+1)\Gamma(N-x+1)},
\end{align}
defined for $x\in\{0,\dots,N\}$; with and $\alpha_1,\alpha_2,\beta>-1$. It has been proven, cf. \cite{Arvesu}, that the corresponding system of measures
\begin{align}
\label{medidasHahn}
\mu_a&\coloneq \sum_{k=0}^N\omega_a(k,\alpha_i,\beta,N)\delta(x-k),& a&\in\{1,2\},
\end{align}
is perfect. This implies the existence of both type II and I multiple orthogonal polynomials associated to these weight functions and $(n_1,n_2)\in\N_0^2$, which are known as the Hahn multiple orthogonal polynomials. Type II Hahn multiple orthogonal polynomials polynomials $Q_{(n_1,n_2)}(x,\alpha_1,\alpha_2,\beta,N)$ satisfy
the discrete orthogonality relations
\begin{align}
 \label{ortogonalidadHahnII}
 \sum_{k=0}^{N}Q_{(n_1,n_2)}(k,\alpha_1,\alpha_2,\beta,N)\omega_a(k,\alpha_a\beta,N)(-k)_j&=0,&j&=\{0,\dots,n_a-1 \}, & a&\in\{1,2\}, 
\end{align}
with respect to the weight functions \eqref{pesosHahn}.
In this paper we use a frequent notation, cf. \cite[Chapter 6]{Ismail}, to denote the Han polynomials by  $Q_n$. Notice, however that in \cite{Nikiforov_Suslov_Uvarov} they are denoted by  $h_n$. This is why we use a calligraphic letter for the linear form $\mathscr Q_n$, notation that is also usual, see \cite[Chapter 23]{Ismail}.

For $(n_1,n_2)\in\N_0^2$, the explicit expression of these monic type II multiple orthogonal polynomials was found by Arvesú, Coussement and Van Assche, cf. \cite{Arvesu}, and reads
\begin{align}
 \label{HahnTipoII}
\begin{aligned}
  Q_{(n_1,n_2)}&=\begin{multlined}[t][.85\textwidth]
  \dfrac{(\alpha_1+1)_{n_1}(\alpha_2+1)_{n_2}(-N)_{n_1+n_2}}{(\alpha_1+\beta+n_1+n_2+1)_{n_1}(\alpha_2+\beta+n_1+n_2+1)_{n_2}}\\
 \times
 \KF{2:3;1}{2:2;0}{(-x,\alpha_1+\beta+n_1+1):(-n_2,\alpha_1+n_1+1,\alpha_2+\beta+n_1+n_2+1);(-n_1)}{(-N,\alpha_1+1):(\alpha_2+1,\alpha_1+\beta+n_1+1);--}{1,1}
\end{multlined}\\
&=\begin{multlined}[t][.75\textwidth]
  \dfrac{(\alpha_1+1)_{n_1}(\alpha_2+1)_{n_2}(-N)_{n_1+n_2}}{(\alpha_1+\beta+n_1+n_2+1)_{n_1}(\alpha_2+\beta+n_1+n_2+1)_{n_2}}\\
 \times\sum_{l=0}^{n_1+n_2}
 \dfrac{(\alpha_1+\beta+n_1+1)_l}{(\alpha_1+1)_l}\sum_{m=0}^l\dfrac{(-n_1)_{m}(-n_2)_{l-m}}{m!(l-m)!}\dfrac{(\alpha_2+\beta+n_1+n_2+1)_{l-m}(\alpha_1+n_1+1)_{l-m}}{(\alpha_2+1)_{l-m}(\alpha_1+\beta+n_1+1)_{l-m}}\dfrac{(-x)_l}{(-N)_l}.
 \end{multlined}
\end{aligned}
\end{align}
 However, analogous explicit expressions type I multiple orthogonal Hahn polynomials remain unknown till now. The main result of this paper 
 remedies this situation as follows:

\begin{teo}[Type I Hahn multiple orthogonal polynomials]
The type I multiple orthogonal Hahn polynomials $ Q_{(n_1,n_2)}^{(a)}(x,\alpha_1,\alpha_2,\beta,N)$, $a\in\{1,2\}$, $(n_1,n_2)\in\N_0^2$, biorthogonal to the Hahn multiple polynomials of type II in \eqref{HahnTipoII}, are 
\begin{align}
\label{HahnTipoI}
\left\{\begin{aligned}
  Q_{(n_1,n_2)}^{(a)}
 & =
\begin{multlined}[t][.9\textwidth]
  \dfrac{(-1)^{n_a-1}(N+1-n_1-n_2)!(n_1+n_2-2)!}{(n_1-1)!(n_2-1)!(\beta+1)_{n_1+n_2-1}(\alpha_a+\beta+n_1+n_2+n_a)_{N+1-n_1-n_2}}\dfrac{(\hat{\alpha}_a+\beta+\hat{n}_a+1)_{n_1+n_2-1}}{(\alpha_a-\hat{\alpha}_i-\hat{n}_a+1)_{n_1+n_2-1}}\\
 \times\KF{2:3;1}{2:2;0}{(-n_a+1,-N): (\alpha_a+\beta+n_1+n_2, \alpha_a-\hat{\alpha}_a-\hat{n}_a+1,-x); (\hat{\alpha}_a-\alpha_a-n_a+1)}{(-n_1-n_2+2,\hat{\alpha}_a+\beta+\hat{n}_a+1): (\alpha_a+1, -N); --}{1,1}\end{multlined}\\
 & =\sum_{l=0}^{n_a-1}C^{(a),l}_{(n_1,n_2)}(\alpha_1,\alpha_2,\beta,N)(-x)_l,
\end{aligned}\right.
\end{align}
with
\begin{multline}
\label{coefHahnI}
 C^{(a),l}_{(n_1,n_2)}\coloneq\dfrac{(-1)^{n_a-1}(N+1-n_1-n_2)!(n_1+n_2-2)!}{(n_1-1)!(n_2-1)!(\beta+1)_{n_1+n_2-1}(\alpha_a+\beta+n_1+n_2+n_a)_{N+1-n_1-n_2}}\dfrac{(\hat{\alpha}_a+\beta+\hat{n}_a+1)_{n_1+n_2-1}}{(\alpha_a-\hat{\alpha}_a-\hat{n}_a+1)_{n_1+n_2-1}}\\
 \times\dfrac{(\alpha_a+\beta+n_1+n_2)_l
 (\alpha_a-\hat{\alpha}_a-\hat{n}_a+1)_l}{l!(\alpha_a+1)_l(-N)_l}\sum_{m=0}^{n_a-1-l}\dfrac{(-n_a+1)_{l+m}(-N)_{l+m} (\hat{\alpha}_a-\alpha_a-n_a+1)_m}{m!(-n_1-n_2+2)_{l+m}(\hat{\alpha}_a+\beta+\hat{n}_a+1)_{l+m}}.
\end{multline}
We use the notation
\begin{align*}
 \hat{\alpha}_a&\coloneq \delta_{a,2}\alpha_1+\delta_{a,1}\alpha_2, &\hat{n}_a&\coloneq \delta_{a,2}n_1+\delta_{a,1}n_2=n_1+n_2-n_a.
\end{align*}
\end{teo}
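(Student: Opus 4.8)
The plan is to exploit the uniqueness of type I polynomials guaranteed by perfectness: with $\deg Q^{(a)}_{(n_1,n_2)}\le n_a-1$ fixed, the pair $\big(Q^{(1)}_{(n_1,n_2)},Q^{(2)}_{(n_1,n_2)}\big)$ is determined, up to normalization, by the orthogonality relations \eqref{ortdiscI}, and the normalization is fixed by \eqref{biortogonalidaddisc}. Concretely, it suffices to show that the right-hand side of \eqref{HahnTipoI} satisfies
\[
 \sum_{k=0}^{N}(-k)_j\,\mathscr Q_{(n_1,n_2)}(k)=(-1)^{\,n_1+n_2-1}\,\delta_{j,\,n_1+n_2-1},\qquad j=0,\dots,n_1+n_2-1,
\]
the right-most value coming from pairing the monic type II polynomial of degree $n_1+n_2-1$ with $\mathscr Q_{(n_1,n_2)}$ in \eqref{biortogonalidaddisc}. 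The degree requirement is automatic, since $\sum_{l=0}^{n_a-1}C^{(a),l}(-x)_l$ has degree at most $n_a-1$; thus the only content is the displayed moment identity, and the coefficients \eqref{coefHahnI} are simply the expansion of the Kampé de Fériet series in the falling-factorial basis $\{(-x)_l\}$.

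The computational engine is the weight-shift identity
\[
 (-x)_l\,\omega_a(x,\alpha_a,\beta,N)=(-1)^l(\alpha_a+1)_l\,\omega_a(x-l,\alpha_a+l,\beta,N-l),
\]
which follows immediately from the factorial form of \eqref{pesosHahn} and trades multiplication by $(-x)_l$ for a simultaneous shift of $N$ and of $\alpha_a$. Combined with the Chu--Vandermonde evaluation of the Hahn weight,
\[
 \sum_{x}(-x)_p\,\omega_a(x,\alpha_a,\beta,N)=(-1)^p(\alpha_a+1)_p\,\frac{(\alpha_a+\beta+p+2)_{N-p}}{(N-p)!},
\]
and the Vandermonde expansion $(-x-l)_j=\sum_{p}\binom{j}{p}(-l)_{j-p}(-x)_p$ needed after the index shift, every moment $\sum_k(-k)_j(-k)_l\,\omega_a(k)$ collapses to a single terminating ${}_{r+2}F_{r+1}$ evaluated at $1$. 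Substituting the ansatz into the moment identity then turns its left-hand side into a finite triple sum over $a$, over $l$, and over the inner index $m$ of \eqref{coefHahnI}.

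The crux is to show that for $0\le j\le n_1+n_2-2$ the contributions of the two weights $a=1$ and $a=2$ cancel, while for $j=n_1+n_2-1$ they add up to $(-1)^{n_1+n_2-1}$. This is exactly where the Karp--Prilepkina theorem (Theorem \ref{Karp}) enters: each weight produces a terminating ${}_{r+2}F_{r+1}$ at $1$ whose numerator parameters include $-n_a+1$ or $-N$, and Theorem \ref{Karp} rewrites such a series into one with a negative-integer numerator parameter $-p+1$, making the required evaluation explicit. Tracking these closed forms shows that the $a=1$ and $a=2$ contributions are equal and opposite for $j\le n_1+n_2-2$ and combine to $(-1)^{n_1+n_2-1}$ for $j=n_1+n_2-1$. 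Because the hatted parameters satisfy $\hat\alpha_a=\delta_{a,2}\alpha_1+\delta_{a,1}\alpha_2$ and $\hat n_a=n_1+n_2-n_a$, encoding the involution $\omega_1\leftrightarrow\omega_2$, $n_1\leftrightarrow n_2$, only the case $a=1$ need be carried out in full; the case $a=2$ follows by symmetry, and the full biorthogonality \eqref{biortogonalidaddisc} then follows from uniqueness rather than from a separate argument.

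The main obstacle I anticipate is the parameter bookkeeping in this last step: one has to match the numerator and denominator parameters produced jointly by $C^{(a),l}$ and the weight moments against the precise shift pattern $f_i+m_i$ and the convergence hypothesis $\Re(p-a-m_1-\cdots-m_r)>0$ of Theorem \ref{Karp}, and then verify that the two closed forms are genuinely equal and opposite for every admissible $j$ simultaneously, not merely for the top moment. As an independent check on the algebra I would confirm that the closed form is compatible with the connection relation \eqref{eq:connection}, and that on the step line it reproduces the base forms $\mathscr Q_{(1,0)}$ and $\mathscr Q_{(1,1)}$, so that Proposition \ref{pro:recurrence_unicity_typeI} validates its extension off the step line.
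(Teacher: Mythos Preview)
Your overall strategy is exactly the paper's: establish the orthogonality relations together with the normalizing moment, then invoke uniqueness to get full biorthogonality. Two substantive steps are, however, glossed over in your outline and constitute the real work.

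First, the claim that ``every moment $\sum_k(-k)_j(-k)_l\,\omega_a(k)$ collapses to a single terminating ${}_{r+2}F_{r+1}$'' is too optimistic once you reinsert the full ansatz \eqref{coefHahnI}. The coefficients $C^{(a),l}_{(n_1,n_2)}$ carry an inner $m$-sum, so after the Chu--Vandermonde step you are left with a \emph{double} hypergeometric sum in $(l,m)$, i.e.\ a Kampé de Fériet series of type $F^{2:3;1}_{2:2;0}$ at $(1,1)$, not a single ${}_{r+2}F_{r+1}$. Karp--Prilepkina does not apply directly to that object. The paper first proves a separate reduction (its Lemma~\ref{conjeturaqueyano}) collapsing this particular $F^{2:3;1}_{2:2;0}$ to a ${}_3F_2(1)$; the proof of that lemma needs, besides Chu--Vandermonde, the Rakha--Rathie reduction for $F^{2:2;1}_{2:1;0}$ and the Pfaff--Saalschütz summation. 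Only after this reduction does Theorem~\ref{Karp} enter, transforming the $a=1$ ${}_3F_2$ into the $a=2$ one and yielding $I^{(1),j}=-I^{(2),j}$ for $0\le j\le n_1+n_2-2$.

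Second, your outline treats the normalizing case $j=n_1+n_2-1$ on the same footing, but in the paper Karp--Prilepkina is \emph{not} used there (the convergence hypothesis you flag indeed breaks down). Instead, the ${}_3F_2$ at the top moment is expanded and summed directly via a partial-fraction decomposition of $\dfrac{1}{(z-b)(z-a)_n}$, together with the elementary identity $\dfrac{1}{(n-1)!}\sum_{l}(-1)^l\binom{n-1}{l}\dfrac{1}{z-a+l}=\dfrac{1}{(z-a)_n}$, to show $I^{(1),n_1+n_2-1}+I^{(2),n_1+n_2-1}=1$. Your anticipated ``obstacle'' is therefore real, and it is resolved by tools (Rakha--Rathie, Pfaff--Saalschütz, partial fractions) absent from your sketch rather than by more careful bookkeeping with Theorem~\ref{Karp} alone.
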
\enlargethispage{1cm}
To better understand the notation notice that $\hat{\alpha}_1=\alpha_2,\hat{\alpha}_2=\alpha_1$ and $\hat{n}_1=n_2,\hat{n}_2=n_1$ and, consequently,
\begin{align*}
 Q_{(n_1,n_2)}^{(1)}(x)
 &=\begin{multlined}[t][.7\textwidth]
  \dfrac{(-1)^{n_1-1}(N+1-n_1-n_2)!(n_1+n_2-2)!}{(n_1-1)!(n_2-1)!(\beta+1)_{n_1+n_2-1}(\alpha_1+\beta+n_1+n_2+n_1)_{N+1-n_1-n_2}}\dfrac{({\alpha}_2+\beta+{n}_2+1)_{n_1+n_2-1}}{(\alpha_1-{\alpha}_2-{n}_2+1)_{n_1+n_2-1}}\\
 \times\KF{2:3;1}{2:2;0}{(-n_1+1,-N): (\alpha_1+\beta+n_1+n_2, \alpha_1-{\alpha}_2-{n}_2+1,-x); ({\alpha}_2-\alpha_1-n_1+1)}{(-n_1-n_2+2,{\alpha}_2+\beta+{n}_2+1): (\alpha_1+1, -N); --}{1,1},
 \end{multlined}\\
 Q_{(n_1,n_2)}^{(2)}(x)
 &=\begin{multlined}[t][.7\textwidth]\dfrac{(-1)^{n_2-1}(N+1-n_1-n_2)!(n_1+n_2-2)!}{(n_1-1)!(n_2-1)!(\beta+1)_{n_1+n_2-1}(\alpha_2+\beta+n_1+n_2+n_2)_{N+1-n_1-n_2}}\dfrac{({\alpha}_1+\beta+{n}_1+1)_{n_1+n_2-1}}{(\alpha_2-{\alpha}_1-{n}_1+1)_{n_1+n_2-1}}\nonumber\\
 \times\KF{2:3;1}{2:2;0}{(-n_2+1,-N): (\alpha_2+\beta+n_1+n_2, \alpha_2-{\alpha}_1-{n}_1+1,-x); ({\alpha}_1-\alpha_2-n_2+1)}{(-n_1-n_2+2,{\alpha}_1+\beta+{n}_1+1): (\alpha_2+1, -N); --}{1,1}.
 \end{multlined}
\end{align*}

In order to prove this Theorem, we will first prove the orthogonality relations and then prove the biorthogonality, finally we will state the recursion relations satisfied by these multiple orthogonal polynomials.

\subsection{Orthogonality}
We are going to start proving the polynomials defined in \eqref{HahnTipoI} satisfy discrete orthogonality relations of the form \eqref{ortdiscI}. In order to do that we will need the following reduction formula for Kampé de Fériet series to a generalized hypergeometric series.

\begin{lemma}
\label{conjeturaqueyano}
Let be $n_a,\hat{n}_a\in\mathbb N$, $\alpha_a,\hat{\alpha}_a,\beta,N,j\in\mathbb R$ then
\begin{multline}
\label{laquehayqueprobar}
\KF{2:3;1}{2:2;0}{(-n_a+1,-N): (\alpha_a+\beta+n_1+n_2,-N+j, \alpha_a-\hat{\alpha}_a-\hat{n}_a+1); (\hat{\alpha}_a-\alpha_a-n_a+1)}{(-n_1-n_2+2,\hat{\alpha}_a+\beta+\hat{n}_a+1): (\alpha_a+\beta+j+2, -N); --}{1,1}\\
 =\dfrac{(\hat{\alpha}_a-\alpha_a-n_a+1)_{n_a-1}(\alpha_a+\beta+N+2)_{n_a-1}}{(-n_1-n_2+2)_{n_a-1}(\hat{\alpha}_a+\beta+\hat{n}_a+1)_{n_a-1}}\pFq{3}{2}{-n_a+1,\alpha_a+\beta+n_1+n_2,\alpha_a-\hat{\alpha}_a-\hat{n}_a+1}{\alpha_a+\beta+j+2,\alpha_a-\hat{\alpha}_a+1}{1}.
\end{multline}
\end{lemma}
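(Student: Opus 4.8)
The plan is to expand the Kampé de Fériet series on the left-hand side of \eqref{laquehayqueprobar} as a double sum in $l$ and $m$ and to reorganize it so that Theorem \ref{Karp} becomes applicable. The obstruction to a direct application is the common-index denominator parameter $\hat{\alpha}_a+\beta+\hat{n}_a+1$: if one sums over $m$ (or over $l$) first, this parameter becomes $\hat{\alpha}_a+\beta+\hat{n}_a+1+l$ (resp.\ $+m$), and it has no partner among the numerator parameters differing from it by an integer, so no contiguous or Karp--Prilepkina reduction is possible. The key preparatory step is therefore to sum along the anti-diagonal: since $-n_a+1$ is a nonpositive integer the series truncates to $l+m\leq n_a-1$, and setting $t\coloneq l+m$ pulls the whole common factor $(-n_a+1)_t(-N)_t/\big[(-n_1-n_2+2)_t(\hat{\alpha}_a+\beta+\hat{n}_a+1)_t\big]$ out of the inner sum, removing the offending parameter from the summation entirely.

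After this reindexing, and after rewriting $(\hat{\alpha}_a-\alpha_a-n_a+1)_{t-l}/(t-l)!$ through the reflection identities $(x)_{t-l}=(-1)^l(x)_t/(1-x-t)_l$ and $(t-l)!=(-1)^l\, t!/(-t)_l$, the inner sum over $l$ becomes a terminating hypergeometric series at unit argument,
\begin{align*}
 \pFq{4}{3}{-t,\alpha_a+\beta+n_1+n_2,-N+j,\alpha_a-\hat{\alpha}_a-\hat{n}_a+1}{\alpha_a+\beta+j+2,-N,\alpha_a-\hat{\alpha}_a+n_a-t}{1}.
\end{align*}
I would then apply Theorem \ref{Karp} with $r=2$, reading off $a=-t$, $b=\alpha_a-\hat{\alpha}_a-\hat{n}_a+1$, $p=n_1+n_2-1-t$, and the shifted pairs $(f_1,f_1+m_1)=(\alpha_a+\beta+j+2,\alpha_a+\beta+n_1+n_2)$ with $m_1=n_1+n_2-j-2$ and $(f_2,f_2+m_2)=(-N,-N+j)$ with $m_2=j$. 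One checks that $b+p=\alpha_a-\hat{\alpha}_a+n_a-t$ matches the remaining lower parameter and that the hypothesis holds with room to spare, since $p-a-m_1-m_2=1>0$ identically. The shifts $m_1,m_2$ are nonnegative integers exactly when $j\in\{0,\dots,n_1+n_2-2\}$, which is precisely the range relevant to the orthogonality relations \eqref{ortdiscI}.

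The computational heart of the proof --- and the step I expect to be the main obstacle --- is to simplify the Karp--Prilepkina output and carry out the remaining summation. The transformation replaces the inner series by a Gamma-factor prefactor times a new terminating ${}_4F_3$ whose parameters depend on $t$; substituting this back and interchanging the order of summation, the inner sum over the anti-diagonal index $t$ should collapse by a Chu--Vandermonde or Saalschütz summation, leaving a single outer sum. The delicate part is the bookkeeping of Pochhammer and Gamma factors: the factors $(-n_a+1)_t(-N)_t$, $(\hat{\alpha}_a-\alpha_a-n_a+1)_t$, together with $\Gamma(1+t)$, $\Gamma(\alpha_a-\hat{\alpha}_a+n_a-t)$, $(n_1+n_2-2-t)!$ and $\Gamma(\alpha_a-\hat{\alpha}_a-\hat{n}_a+t+2)$, must combine and telescope to produce exactly the prefactor $(\hat{\alpha}_a-\alpha_a-n_a+1)_{n_a-1}(\alpha_a+\beta+N+2)_{n_a-1}/\big[(-n_1-n_2+2)_{n_a-1}(\hat{\alpha}_a+\beta+\hat{n}_a+1)_{n_a-1}\big]$ and the ${}_3F_2$ on the right-hand side of \eqref{laquehayqueprobar}, in which the lower parameter $\alpha_a-\hat{\alpha}_a+1$ appears as the shift of the upper parameter $\alpha_a-\hat{\alpha}_a-\hat{n}_a+1$ by $\hat{n}_a$.

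Finally, to remove the restriction $j\in\{0,\dots,n_1+n_2-2\}$ and obtain the identity for all $j\in\mathbb{R}$ as stated, I would invoke a rationality argument. Both sides of \eqref{laquehayqueprobar} are finite sums (the truncation $l+m\leq n_a-1$, resp.\ the terminating ${}_3F_2$), and as functions of $j$ each is rational with common denominator $(\alpha_a+\beta+j+2)_{n_a-1}$ and numerator a polynomial of degree at most $n_a-1$. The difference of the two numerators is thus a polynomial in $j$ of degree at most $n_a-1$ that vanishes at the $n_1+n_2-1\geq n_a$ integer points $j=0,\dots,n_1+n_2-2$; hence it vanishes identically, and the identity holds for all $j$ off the poles, completing the proof.
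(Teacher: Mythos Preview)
Your route differs substantially from the paper's. The paper never reindexes along $t=l+m$; instead it keeps the $(l,m)$ sum, applies Chu--Vandermonde to the single factor $(-N+j)_l/(\alpha_a+\beta+j+2)_l$ to introduce a new index $k$, and then recognises the residual $(l,m)$-sum as a Kampé de Fériet $F^{2:2;1}_{2:1;0}$ in $k$. A Rakha--Rathie reduction formula (not Karp--Prilepkina) collapses that series to a ${}_3F_2$ which is Saalschützian and is summed by Pfaff--Saalschütz in closed form. The surviving $k$-sum, over $k\in\{0,\dots,n_a-1\}$, is then literally the ${}_3F_2$ on the right of \eqref{laquehayqueprobar}. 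No integrality of $j$ is used anywhere, so the identity holds for all real $j$ without your rationality epilogue.

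Your preparation up to and including the application of Theorem~\ref{Karp} is correct: the anti-diagonal rewriting, the parameter identifications, the check $p-a-m_1-m_2=1$, and the restriction $j\in\{0,\dots,n_1+n_2-2\}$ are all right. The genuine gap is the ``should collapse'' step. After substituting the Karp--Prilepkina output and interchanging, the $t!$ in your outer summand cancels against $\Gamma(1-a)=t!$ from the transformation, while the pairs $(n_1+n_2-2-t)!\,(-n_1-n_2+2)_t$ and $(\hat\alpha_a-\alpha_a-n_a+1)_t\,\Gamma(\alpha_a-\hat\alpha_a+n_a-t)$ each reduce to $t$-independent constants. The remaining inner $t$-sum, for fixed outer index $s$, is
\[
 \sum_{t}\frac{(-n_a+1)_t(-N)_t(-n_1-n_2+2+s)_t}{(\hat\alpha_a+\beta+\hat n_a+1)_t(-n_1-n_2+2)_t(\alpha_a-\hat\alpha_a-\hat n_a+2+s)_t}.
\]
Rewritten as a terminating ${}_4F_3(1)$ via $(1)_t/t!=1$, this has parameter excess $\alpha_a+\beta+N+n_a+1\neq 1$, hence is not Saalschützian; nor is it a ${}_2F_1$. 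So neither Pfaff--Saalschütz nor Chu--Vandermonde applies directly, and the expectation that the $t$-sum collapses in one standard step is not justified. Your plan may still be salvageable (for instance, the ${}_4F_3$ that Karp--Prilepkina produces is itself Saalschützian and of Karlsson--Minton type, so a second contiguous transformation is available), but as written the key simplification is missing. The paper's Rakha--Rathie\,+\,Pfaff--Saalschütz route sidesteps this obstruction because it yields a closed form \emph{before} the final single sum is assembled.
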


\begin{proof}
Let's write the left hand side of Equation \eqref{laquehayqueprobar} as
\begin{multline}
\label{miembroizquierdo}
 \KF{2:3;1}{{2:2;0}}{(-n_a+1,-N): (\alpha_a+\beta+n_1+n_2,-N+j, \alpha_a-\hat{\alpha}_a-\hat{n}_a+1); (\hat{\alpha}_a-\alpha_a-n_a+1)}{(-n_1-n_2+2,\hat{\alpha}_a+\beta+\hat{n}_a+1): (\alpha_a+\beta+j+2, -N); --}{1,1}
 \\=\sum^{n_a-1}_{l=0}\dfrac{1}{l!}\dfrac{(-n_a+1)_l}{(-n_1-n_2+2)_l(\hat{\alpha}_a+\beta+\hat{n}_a+1)_l}\dfrac{(\alpha_a+\beta+n_1+n_2)_l(-N+j)_l(\alpha_a-\hat{\alpha}_a-\hat{n}_a+1)_l}{(\alpha_a+\beta+j+2)_l}\\
 \times\sum^{n_a-1-l}_{m=0}
 \dfrac{1}{m!}\dfrac{(-n_a+1+l)_{m}(-N+l)_m (\hat{\alpha}_a-\alpha_a-n_a+1)_m}{(-n_1-n_2+2+l)_{m}(\hat{\alpha}_a+\beta+\hat{n}_a+1+l)_{m}},
\end{multline}
and recall the Chu--Vandermonde identity to get
\begin{align}
\label{cocienteconVanderchu}
 &\dfrac{(-N+j)_l}{(\alpha_a+\beta+j+2)_l}=\sum_{k=0}^l\dfrac{(-l)_k}{k!}\dfrac{(\alpha_a+\beta+N+2)_k}{(\alpha_a+\beta+j+2)_k}.
\end{align}
Then, if we introduce \eqref{cocienteconVanderchu} into \eqref{miembroizquierdo}, after some reorganization, we obtain
\begin{multline}
\label{miembroizquierdo2}
 \KF{2:3;1}{2:2;0}{(-n_a+1,-N): (\alpha_a+\beta+n_1+n_2,-N+j, \alpha_a-\hat{\alpha}_a-\hat{n}_a+1); (\hat{\alpha}_a-\alpha_a-n_a+1)}{(-n_1-n_2+2,\hat{\alpha}_a+\beta+\hat{n}_a+1): (\alpha_a+\beta+j+2, -N); --}{1,1}\\
 =\sum^{n_a-1}_{k=0}\dfrac{1}{k!}\dfrac{(-n_a+1)_{k}(\alpha_a+\beta+n_1+n_2)_{k}(\alpha_a-\hat{\alpha}_a-\hat{n}_a+1)_{k}}{(\alpha_a+\beta+j+2)_k(\alpha_a-\hat{\alpha}_a+1)_k}
 (-1)^k\dfrac{(\alpha_a+\beta+N+2)_k(\alpha_a-\hat{\alpha}_a+1)_k}{(-n_1-n_2+2)_{k}(\hat{\alpha}_a+\beta+\hat{n}_a+1)_{k}}\\
 \times\KF{2:2;1}{2:1;0}{(-n_a+1+k,-N+k):(\alpha_a+\beta+n_1+n_2+k,{\alpha}_a-\hat{\alpha}_a-\hat{n}_a+1+k);(\hat{\alpha}_a-\alpha_a-n_a+1)}{(-n_1-n_2+2+k,\hat{\alpha}_a+\beta+\hat{n}_a+1+k):(-N+k);--}{1,1}.
\end{multline}
Now, we use the following reduction formula due to Rakha and Rathie, cf. \cite[Equation (3.20)]{RR},
\begin{align*}
\KF{2:2;1}{2:1;0}{\alpha,\lambda:\gamma,\beta-\epsilon;\epsilon}{\beta,\mu:\delta;--}{1,1}=\dfrac{\Gamma(\mu)\Gamma(\mu-\alpha-\lambda)}{\Gamma(\mu-\alpha)\Gamma(\mu-\lambda)}\,\pFq{4}{3}{\alpha,\lambda,\delta-\gamma,\beta-\epsilon}{\beta,\delta,1-\mu+\alpha+\lambda}{1}
\end{align*}
to rewrite
\begin{multline*}
	\KF{2:2;1}{2:1;0}{(-n_i+1+k,-N+k):(\alpha_i+\beta+n_1+n_2+k,{\alpha}_i-\hat{\alpha}_i-\hat{n}_i+1+k);(\hat{\alpha}_i-\alpha_i-n_i+1)}{(-n_1-n_2+2+k,\hat{\alpha}_i+\beta+\hat{n}_i+1+k):(-N+k);--}{1,1}\\
	=\begin{multlined}[t][0.85\textwidth]
		\dfrac{\Gamma(\hat{\alpha}_i+\beta+\hat{n}_i+1+k)\Gamma(\hat{\alpha}_i+\beta+n_1+n_2+N-k)}{\Gamma(\hat{\alpha}_i+\beta+{n}_1+n_2)\Gamma(\hat{\alpha}_i+\beta+\hat{n}_i+N+1)}\\
	\times\pFq{4}{3}{-n_i+1+k,-N+k,\alpha_i-\hat{\alpha}_i-\hat{n}_i+1+k,-\alpha_i-\beta-n_1-n_2-N}{-n_1-n_2+2+k,-N+k,-\hat{\alpha}_i-\beta-n_1-n_2-N+k+1}{1}
	\end{multlined}\\
=\begin{multlined}[t][.75\textwidth]
	\dfrac{\Gamma(\hat{\alpha}_i+\beta+\hat{n}_i+1+k)\Gamma(\hat{\alpha}_i+\beta+n_1+n_2+N-k)}{\Gamma(\hat{\alpha}_i+\beta+{n}_1+n_2)\Gamma(\hat{\alpha}_i+\beta+\hat{n}_i+N+1)}\nonumber\\
	\times\pFq{3}{2}{-n_i+1+k,\alpha_i-\hat{\alpha}_i-\hat{n}_i+1+k,-\alpha_i-\beta-n_1-n_2-N}{-n_1-n_2+2+k,-\hat{\alpha}_i-\beta-n_1-n_2-N+k+1}{1}.
\end{multlined}
\end{multline*}
Then, the Pfaff--Saalschütz formula, cf. \cite[Theorem 2.2.6]{LibrodeHypergeom}:
\begin{align*}
 \pFq{3}{2}{-n,a,b}{c,d}{1}&=\dfrac{(c-a)_{n}(c-b)_{n}}{(c)_n{}(c-a-b)_{n}},& &\text{if $-n+a+b+1=c+d$,}
\end{align*}
when used in the previous equation leads to the following summation formula at unity for a Kampé de Fériet series
\begin{multline}
 \label{hypergeomconladela32}
  \KF{2:2;1}{2:1;0}{(-n_a+1+k,-N+k):(\alpha_a+\beta+n_1+n_2+k,{\alpha}_a-\hat{\alpha}_a-\hat{n}_a+1+k);(\hat{\alpha}_a-\alpha_a-n_a+1)}{(-n_1-n_2+2+k,\hat{\alpha}_a+\beta+\hat{n}_a+1+k):(-N+k);--}{1,1}\\
 =\dfrac{\Gamma(\hat{\alpha}_a+\beta+\hat{n}_a+1+k)\Gamma(\hat{\alpha}_a+\beta+n_1+n_2+N-k)}{\Gamma(\hat{\alpha}_a+\beta+{n}_1+n_2)\Gamma(\hat{\alpha}_a+\beta+\hat{n}_a+N+1)}
 \dfrac{(\hat{\alpha}_a-\alpha_a-n_a+1)_{n_a-1-k}(\alpha_a+\beta+N+2+k)_{n_a-1-k}}{(-n_1-n_2+2+k)_{n_a-1-k}(\hat{\alpha}_a+\beta+\hat{n}_a+1+N)_{n_a-1-k}}.\end{multline}
If we introduce \eqref{hypergeomconladela32} 
into Equation \eqref{miembroizquierdo2}, and do some clearing, we finally find that
\begin{multline*}
\KF{2:3;1}{2:2;0}{(-n_a+1,-N): (\alpha_a+\beta+n_1+n_2,-N+j, \alpha_a-\hat{\alpha}_a-\hat{n}_a+1); (\hat{\alpha}_a-\alpha_a-n_a+1)}{(-n_1-n_2+2,\hat{\alpha}_a+\beta+\hat{n}_a+1): (\alpha_a+\beta+j+2, -N); --}{1,1}\\
 =\dfrac{(\hat{\alpha}_a-\alpha_a-n_a+1)_{n_a-1}(\alpha_a+\beta+N+2)_{n_a-1}}{(-n_1-n_2+2)_{n_a-1}(\hat{\alpha}_a+\beta+\hat{n}_a+1)_{n_a-1}}\pFq{3}{2}{-n_a+1,\alpha_a+\beta+n_1+n_2,\alpha_a-\hat{\alpha}_a-\hat{n}_a+1}{\alpha_a+\beta+j+2,\alpha_a-\hat{\alpha}_a+1}{1}.
\end{multline*}

\end{proof}

With Lemma \ref{conjeturaqueyano} and Karp--Prilepkina's Theorem \ref{Karp} we are able to prove the multiple orthogonality relations.
\begin{pro}
\label{laprop}
For $ j\in \{0,\dots,n_1+n_2-2\}$, the polynomials \eqref{HahnTipoI} satisfy the orthogonality relations
\begin{align}
\label{ortogonalidadHahnInofinal}
 \sum^{N}_{k=0}\left(Q_{(n_1,n_2)}^{(1)}(k,\alpha_1,\alpha_2,\beta,N)\omega_1(k,\alpha_1,\beta,N)+Q_{(n_1,n_2)}^{(2)}(k,\alpha_1,\alpha_2,\beta,N) \omega_2(k,\alpha_2,\beta,N)\right)(-N+k)_j&=0,\\
\label{ortogonalidadHahnIfinal}
 \sum^{N}_{k=0}\left(Q_{(n_1,n_2)}^{(1)}(k,\alpha_1,\alpha_2,\beta,N)\omega_1(k,\alpha_1,\beta,N)
 +Q_{(n_1,n_2)}^{(2)}(k,\alpha_1,\alpha_2,\beta,N) \omega_2(k,\alpha_2,\beta,N)\right)(-N+k)_{n_1+n_2-1}&=1,
\end{align}
with respect to the weight functions \eqref{pesosHahn} .
\end{pro}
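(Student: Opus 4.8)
The plan is to argue directly from the finite expansion $Q^{(a)}_{(n_1,n_2)}(x)=\sum_{l=0}^{n_a-1}C^{(a),l}_{(n_1,n_2)}(-x)_l$ recorded in \eqref{HahnTipoI}--\eqref{coefHahnI}. Substituting it into the linear form and interchanging the (finite) summations, both \eqref{ortogonalidadHahnInofinal} and \eqref{ortogonalidadHahnIfinal} reduce to controlling, for each weight $a\in\{1,2\}$, the moment sums $S_a(l,j)\coloneq\sum_{k=0}^{N}(-k)_l\,(-N+k)_j\,\omega_a(k,\alpha_a,\beta,N)$. At the outset I would emphasise that testing against $(-N+k)_j$ rather than $(-k)_j$ is a deliberate choice: since $\{(-N+k)_j\}_{j=0}^{n_1+n_2-1}$ is again a basis of the polynomials of degree at most $n_1+n_2-1$, these two relations are equivalent to the defining orthogonality \eqref{ortdiscI} together with the normalisation, and the factor $(-N+k)_j$ is tailored to cancel against the building block $\Gamma(\beta+N-k+1)/\Gamma(N-k+1)$ of $\omega_a$ in \eqref{pesosHahn}.

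First I would evaluate $S_a(l,j)$ in closed form. Writing $\omega_a(k)=\frac{(\alpha_a+1)_k}{k!}\frac{(\beta+1)_{N-k}}{(N-k)!}$, the two falling factorials $(-k)_l$ and $(-N+k)_j$ restrict the summation range to $l\le k\le N-j$; shifting the index turns $S_a(l,j)$ into a terminating ${}_2F_1$ at unit argument, which Chu--Vandermonde sums. I expect the result to take the shape $S_a(l,j)=K_a(j)\,\frac{(\alpha_a+1)_l(-N+j)_l}{(\alpha_a+\beta+j+2)_l}$, with $K_a(j)$ an $l$-independent ratio of Gamma functions (up to a sign $(-1)^{j}$). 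The decisive point is that this is exactly the factor needed to promote the double sum defining $C^{(a),l}_{(n_1,n_2)}$ into the Kamp\'e de F\'eriet series on the left of \eqref{laquehayqueprobar}: the $(\alpha_a+1)_l$ cancels the one in the denominator of $C^{(a),l}_{(n_1,n_2)}$, while $(-N+j)_l$ and $(\alpha_a+\beta+j+2)_l$ supply precisely the two extra $l$-parameters $-N+j$ and $\alpha_a+\beta+j+2$ that distinguish that series from the bare coefficient sum.

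Consequently the contribution of weight $a$ equals $K_a(j)$ times the $l$-independent prefactor of $C^{(a),l}_{(n_1,n_2)}$ times the Kamp\'e de F\'eriet series of Lemma \ref{conjeturaqueyano}. I would then invoke Lemma \ref{conjeturaqueyano} to collapse that series to the ${}_3F_2$ at unity on its right-hand side, and apply Karp--Prilepkina's Theorem \ref{Karp} to evaluate it. Matching its parameters (the parameter $a$ of Theorem \ref{Karp}, not the weight index) as $a=-n_a+1$, $b=\alpha_a-\hat{\alpha}_a-\hat{n}_a+1$, $p=\hat{n}_a$, $f_1=\alpha_a+\beta+j+2$ and $m_1=n_1+n_2-j-2$, the hypothesis $\Re(p-a-m_1)=j+1>0$ of Theorem \ref{Karp} is automatic, while the requirement $m_1\in\N_0$ holds precisely on $0\le j\le n_1+n_2-2$, which is exactly the range of \eqref{ortogonalidadHahnInofinal}; the extreme index $j=n_1+n_2-1$ of \eqref{ortogonalidadHahnIfinal} falls outside this range and I would treat it separately, by a direct (limiting) evaluation of the same ${}_3F_2$.

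The final step is to add the two evaluated contributions. Under the interchange of the two weights, which exchanges $(\alpha_a,n_a)\leftrightarrow(\hat{\alpha}_a,\hat{n}_a)$, I expect the $a=1$ and $a=2$ closed forms to be exactly opposite on $0\le j\le n_1+n_2-2$, so that their sum vanishes and \eqref{ortogonalidadHahnInofinal} follows, whereas at $j=n_1+n_2-1$ they should combine into the normalisation $1$ of \eqref{ortogonalidadHahnIfinal}. I anticipate the main obstacle to lie entirely in this closing bookkeeping: after the triple reduction (Chu--Vandermonde, then Lemma \ref{conjeturaqueyano}, then Theorem \ref{Karp}) one is left with a product of several Gamma and Pochhammer prefactors multiplying a residual terminating ${}_3F_2$, and the real work is to simplify these enough to exhibit the cancellation in the subextremal range and the exact value $1$ at the extreme index. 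Keeping careful track of the signs $(-1)^{n_a-1}$ and $(-1)^j$ and of the balancing of the Pochhammer arguments is where the delicacy concentrates.
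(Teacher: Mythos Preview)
Your proposal is correct and follows essentially the same route as the paper: compute the $k$-sum via Chu--Vandermonde to produce exactly the Kamp\'e de F\'eriet series of Lemma \ref{conjeturaqueyano}, reduce to a ${}_3F_2$, and for $j\le n_1+n_2-2$ apply Theorem \ref{Karp} with the parameter identification you give (which is the one the paper uses). The only points the paper makes concrete that you leave vague are the closing manipulations: after Karp--Prilepkina the paper applies the reversal formula for terminating ${}_{p+1}F_q(1)$ to turn the transformed ${}_3F_2$ for $a=1$ into the ${}_3F_2$ for $a=2$, which is what yields $I^{(1),j}=-I^{(2),j}$; and at the extreme index $j=n_1+n_2-1$ the paper does not argue by a limit but expands the ${}_3F_2$ termwise and uses a partial-fraction decomposition of $\frac{1}{(z-b)(z-a)_n}$ to obtain $I^{(1)}+I^{(2)}=1$.
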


\begin{proof}
First of all we introduce the convenient notation 
\begin{align*}
 I^{(a),j}_{(n_1,n_2)}(\alpha_1,\alpha_2,\beta,N)\coloneq\sum^{N}_{k=0}Q_{(n_1,n_2)}^{(a)}(k,\alpha_1,\alpha_2,\beta,N)\omega_{a}(k,\alpha_a,\beta,N)(-N+k)_j
\end{align*}
to work with.
Equation  \eqref{pesosHahn} for $\omega_a(k,\alpha_a,\beta,N)$, Equation \eqref{HahnTipoI} for $Q_{(n_1,n_2),i}(k,\alpha_1,\alpha_2,\beta,N)$ and some clearing
leads to
\begin{align*}
  I^{(a),j}_{(n_1,n_2)}&\begin{aligned}[t]
&=\sum^{n_a-1}_{l=0}C^{(a),l}_{(n_1,n_2)}(\alpha_1,\alpha_2,\beta,N)\sum^{N}_{k=0}\dfrac{\Gamma(\alpha_a+k+1)}{\Gamma(\alpha_a+1)}\dfrac{\Gamma(\beta+N-k+1)}{\Gamma(\beta+1)}\dfrac{(-k)_l}{\Gamma(k+1)}\dfrac{(-N+k)_j}{\Gamma(N-k+1)}\\
&=\sum^{n_a-1}_{l=0}(-1)^{j+l}\dfrac{(\alpha_a+1)_l(\beta+1)_j}{(N-j-l)!}C^{(a),l}_{(n_1,n_2)}(\alpha_1,\alpha_2,\beta,N)\sum^{N-j}_{k=l}\dbinom{N-j-l}{k-l}(\alpha_a+l+1)_{k-l}(\beta+j+1)_{N-j-k}\\
&=\sum^{n_a-1}_{l=0}(-1)^{j+l}\dfrac{(\alpha_a+1)_l(\beta+1)_j}{(N-j-l)!}C^{(a),l}_{(n_1,n_2)}(\alpha_1,\alpha_2,\beta,N)(\alpha_a+\beta+j+l+2)_{N-j-l},
\end{aligned}
\end{align*}
where the $k$-labeled sum can be easily simplified by means of the Newton's binomial formula for Pochhammer's symbols (which follows from Chu--Vandermonde relation):
\begin{align*}
 (a+b)_n=\sum_{k=0}^n\dbinom{n}{k}(a)_k(b)_{n-k}.
\end{align*}

Using Equation \eqref{coefHahnI} for $C^{(a),l}_{(n_1,n_2),i}$ and after some clearing, we find
\begin{align*}
I^{(a),j}_{(n_1,n_2)}
=\begin{multlined}[t][.9\textwidth]
 \dfrac{(-1)^{n_a-1+j}(n_1+n_2-2)!(N-n_1-n_2+1)!}{(n_1-1)!(n_2-1)!(N-j)!(\beta+j+1)_{n_1+n_2-1-j}}\dfrac{(\alpha_a+\beta+j+2)_{n_1+n_2+n_a-2-j}(\hat{\alpha}_a+\beta+\hat{n}_a+1)_{n_1+n_2-1}}{(\alpha_a-\hat{\alpha}_a-\hat{n}_a+1)_{n_1+n_2-1}(\alpha_a+\beta+N+2)_{n_a-1}}\\
\times\KF{2:3;1}{2:2;0}{(-n_a+1,-N): (\alpha_a+\beta+n_1+n_2,-N+j, \alpha_a-\hat{\alpha}_a-\hat{n}_a+1); (\hat{\alpha}_a-\alpha_a-n_a+1)}{(-n_1-n_2+2,\hat{\alpha}_a+\beta+\hat{n}_a+1): (\alpha_a+\beta+j+2, -N); --}{1,1}.
\end{multlined}
\end{align*}
Then, Lemma \ref{conjeturaqueyano} helps in the reduction of the previous Kampé de Fériet series and we get
\begin{align}
\label{integralHahnsimplificada}
I^{(a),j}_{(n_1,n_2)}
&=\begin{multlined}[t][.9\textwidth]\dfrac{(-1)^{n_1+n_2-1-j}(N-n_1-n_2+1)!}{(N-j)!(\beta+j+1)_{n_1+n_2-1-j}}
\dfrac{(\alpha_a+\beta+j+2)_{n_1+n_2+n_a-2-j}(\hat{\alpha}_a+\beta+n_1+n_2)_{\hat{n}_a}}{(n_a-1)!(\hat{\alpha}_a-\alpha_a)_{\hat{n}_a}}\\
\times\pFq{3}{2}{-n_a+1,\alpha_a+\beta+n_1+n_2,\alpha_a-\hat{\alpha}_a-\hat{n}_a+1}{\alpha_a+\beta+j+2,\alpha_a-\hat{\alpha}_a+1}{1}.
\end{multlined}
\end{align}

Once we have arrived to this expression, we are going to distinguish the cases $j\in\{0,\dots,n_1+n_2-2\}$ and $j=n_1+n_2-1$. In the first place, let us prove that $I^{(1),j}_{(n_1,n_2)}+I^{(2),j}_{(n_1,n_2)}=0$, whenever $j<n_1+n_2-1$.
Let's take $a=1$. Then, if $j<n_1+n_2-1$, we can apply Theorem \ref{Karp} to rewrite the previous ${}_3F_2$ series as
\begin{align*}
\hspace*{-.5cm}\begin{aligned}
  \pFq{3}{2}{-n_1+1,\alpha_1+\beta+n_1+n_2,\alpha_1-{\alpha}_2-{n}_2+1}{\alpha_1+\beta+j+2,\alpha_1-{\alpha}_2+1}{1}
 &=\begin{multlined}[t][.6\textwidth]
  \dfrac{({n}_1-1)!}{({n}_2-1)!}
 \dfrac{\Gamma(\alpha_1-{\alpha}_2+1)}{\Gamma(\alpha_1-{\alpha}_2-{n}_2+1+n_1)}
 \dfrac{({\alpha}_2+\beta+j+{n}_2+1)_{n_1+n_2-2-j}}{(\alpha_1+\beta+j+2)_{n_1+n_2-2-j}}\\
 \times\pFq{3}{2}{-{n}_2+1,-{\alpha}_2-\beta-j-{n}_2,\alpha_1-{\alpha}_2-{n}_2+1}{-{\alpha}_2-\beta-{n}_2+2-n_1-n_2,\alpha_1-{\alpha}_2-{n}_2+n_1+1}{1},
\end{multlined}
\end{aligned}
\end{align*}
and the following reversal formula, cf. \cite[Equation 2.2.3.2]{LibrodeKF},
\begin{align*}
 &\pFq{p+1}{q}{-n,a_1,\dots,a_p}{b_1,\dots,b_q}{1}=(-1)^n\dfrac{(a_1)_n\cdots(a_p)_n}{(b_1)_n\cdots(b_q)_n}\pFq{q+1}{p}{-n,-b_1-n+1,\dots,-b_q-n+1}{-a_1-n+1,\dots,-a_p-n+1}{1}
\end{align*}
lead us to
\begin{multline*}
 \pFq{3}{2}{-n_1+1,\alpha_1+\beta+n_1+n_2,\alpha_1-{\alpha}_2-{n}_2+1}{\alpha_1+\beta+j+2,\alpha_1-{\alpha}_2+1}{1}\\
  =\dfrac{({n}_1-1)!}{({n}_2-1)!}
 \dfrac{\Gamma(\alpha_1-{\alpha}_2+1)}{\Gamma(\alpha_1-{\alpha}_2-{n}_2+1+n_1)}
 \dfrac{({\alpha}_2+\beta+j+{n}_2+1)_{n_1+n_2-2-j}}{(\alpha_1+\beta+j+2)_{n_1+n_2-2-j}}\\
 \times(-1)^{{n}_2-1}
 \dfrac{(-{\alpha}_2-\beta-j-{n}_2)_{{n}_2-1}(\alpha_1-{\alpha}_2-{n}_2+1)_{{n}_2-1}}{(-{\alpha}_2-\beta-{n}_2+2-n_1-n_2)_{{n}_2-1}(\alpha_1-{\alpha}_2-{n}_2+n_1+1)_{{n}_2-1}}\\
 \times\pFq{3}{2}{-{n}_2+1,{\alpha}_2+\beta+n_1+n_2,{\alpha}_2-\alpha_1-n_1+1}{{\alpha}_2+\beta+j+2,{\alpha}_1-\alpha_1+1}{1}.
\end{multline*}
When we replace this expression in Equation \eqref{integralHahnsimplificada}, for $a=1$, we find
\begin{align*}
I^{(1),j}_{(n_1,n_2)}
&=\begin{multlined}[t][.7\textwidth]
 \dfrac{(-1)^{n_1+n_2-1-j}(N-n_1-n_2+1)!}{(N-j)!(\beta+j+1)_{n_1+n_2-1-j}}
\dfrac{(\alpha_1+\beta+j+2)_{n_1+n_2+n_1-2-j}({\alpha}_2+\beta+n_1+n_2)_{{n}_2}}{(n_1-1)!({\alpha}_2-\alpha_1)_{{n}_2}}\\
\times\pFq{3}{2}{-n_1+1,\alpha_1+\beta+n_1+n_2,\alpha_1-{\alpha}_2-{n}_2+1}{\alpha_1+\beta+j+2,\alpha_1-{\alpha}_2+1}{1}
\end{multlined}\\
&=\begin{multlined}[t][.7\textwidth]
 -\dfrac{(-1)^{n_1+n_2-1-j}(N-n_1-n_2+1)!}{(N-j)!(\beta+j+1)_{n_1+n_2-1-j}}
\dfrac{({\alpha}_2+\beta+j+2)_{n_1+n_2+{n}_2-2-j}({\alpha}_1+\beta+n_1+n_2)_{{n}_1}}{({n}_2-1)!({\alpha}_1-{\alpha}_2)_{{n}_1}}\\
\times\pFq{3}{2}{-{n}_2+1,{\alpha}_2+\beta+n_1+n_2,{\alpha}_2-\alpha_1-n_1+1}{{\alpha}_2+\beta+j+2,{\alpha}_2-\alpha_1+1}{1}
\end{multlined}\\
&=-I^{(2),j}_{(n_1,n_2)},
\end{align*}
and we have proven the orthogonality relations \eqref{ortogonalidadHahnInofinal}.
Finally, let us discuss the case $j=n_1+n_2-1$, so that we get unity normalization. We start from Equation \eqref{integralHahnsimplificada}. First, we rewrite $I^{(a),n_1+n_2-1}_{(n_1,n_2)}$ as
\begin{align}
\label{integralHahncasofinal}
\begin{aligned}
 I^{(a),n_1+n_2-1}_{(n_1,n_2)}
&=\dfrac{(\alpha_a+\beta+n_1+n_2+1)_{n_a-1}(\hat{\alpha}_a+\beta+n_1+n_2)_{\hat{n}_a}}{(n_a-1)!(\hat{\alpha}_a-\alpha_a)_{\hat{n}_a}}
\pFq{3}{2}{-n_a+1,\alpha_a+\beta+n_1+n_2,\alpha_a-\hat{\alpha}_a-\hat{n}_a+1}{\alpha_a+\beta+n_1+n_2+1,\alpha_a-\hat{\alpha}_a+1}{1}\\
&=\dfrac{(\alpha_a+\beta+n_1+n_2+1)_{n_a-1}(\hat{\alpha}_a+\beta+n_1+n_2)_{\hat{n}_a}}{(n_a-1)!(\hat{\alpha}_a-\alpha_a)_{\hat{n}_a}}
\sum_{l=0}^{n_a-1}\dfrac{(-n_a+1)_l}{l!}\dfrac{(\alpha_a+\beta+n_1+n_2)_l(\alpha_a-\hat{\alpha}_a-\hat{n}_a+1)_l}{(\alpha_a+\beta+n_1+n_2+1)_l(\alpha_a-\hat{\alpha}_a+1)_l}\\
&=\begin{multlined}[t][.9\textwidth]
 ({\alpha}_1+\beta+n_1+n_2)_{{n}_1}(\alpha_2+\beta+n_1+n_2)_{n_2}\\
\times\dfrac{1}{(n_a-1)!}\sum^{n_a-1}_{l=0}(-1)^{\hat{n}_a+l}\binom{n_a-1}{l}\dfrac{1}{(\alpha_a+l+\beta+n_1+n_2)(\alpha_a+l-(\hat{\alpha}_a+\hat{n}_a-1))_{\hat{n}_a}}.
\end{multlined}
\end{aligned}
\end{align}

Let's remind now the following simple fraction expansion formula
\begin{align*}
 \dfrac{1}{(z-b)(z-a)_n}=\dfrac{1}{(z-b)(b-a)_n}+\dfrac{1}{(n-1)!}\sum_{p=0}^{n-1}(-1)^p\dbinom{n-1}{p}\dfrac{1}{z-a+p},
\end{align*}
so, taking $\alpha_i+l$ as variable, we can rewrite the fraction within the sum on (\ref{integralHahncasofinal}) as
\begin{multline*}
 \dfrac{1}{(\alpha_a+l+\beta+n_1+n_2)(\alpha_a+l-(\hat{\alpha}_a+\hat{n}_a-1))_{\hat{n}_a}}
 \\\begin{aligned}[t]&=
  \begin{multlined}[t][.7\textwidth]
  \dfrac{1}{(\alpha_a+l+\beta+n_1+n_2)(-\hat{\alpha}_a-\beta-n_1-n_2-\hat{n}_a+1)_{\hat{n}_a}}\\
 +\dfrac{1}{(\hat{n}_a-1)!}\sum_{p=0}^{\hat{n}_a-1}(-1)^p\dbinom{\hat{n}_a-1}{p}\dfrac{1}{(\hat{\alpha}_a+\beta+\hat{n}_a-1-p+n_1+n_2)(\alpha_a+l-\hat{\alpha}_a-\hat{n}_a+1+p)}
 \end{multlined}\\
 &=\begin{multlined}[t][.7\textwidth]
  \dfrac{(-1)^{\hat{n}_a}}{(\alpha_a+l+\beta+n_1+n_2)(\hat{\alpha}_a+\beta+n_1+n_2)_{\hat{n}_a}}\\
 +\dfrac{1}{(\hat{n}_a-1)!}\sum_{p=0}^{\hat{n}_a-1}(-1)^p\dbinom{\hat{n}_a-1}{p}\dfrac{1}{(\hat{\alpha}_a+\beta+\hat{n}_a-1-p+n_1+n_2)(\alpha_a+l-\hat{\alpha}_a-\hat{n}_a+1+p)}.
 \end{multlined}
 \end{aligned}
\end{multline*}
Replacing this expression in Equation \eqref{integralHahncasofinal} for $a=1$ we find that
\begin{align*}
 I^{(1),n_1+n_2-1}_{(n_1,n_2),1}
 &=\begin{multlined}[t][.75\textwidth]
  ({\alpha}_1+\beta+n_1+n_2)_{{n}_1}(\alpha_2+\beta+n_1+n_2)_{n_2}\\
 \times \dfrac{1}{(n_1-1)!}\sum^{n_1-1}_{l=0}(-1)^{{n}_2+l}\binom{n_1-1}{l}\dfrac{1}{(\alpha_1+l+\beta+n_1+n_2)(\alpha_1+l-({\alpha}_2+{n}_2-1))_{{n}_2}}
 \end{multlined}\\
 &=\begin{multlined}[t][.75\textwidth]
  \dfrac{({\alpha}_1+\beta+n_1+n_2)_{{n}_1}(\alpha_2+\beta+n_1+n_2)_{n_2}}{({\alpha}_2+\beta+n_1+n_2)_{{n}_2}}\dfrac{1}{(n_1-1)!}\sum^{n_1-1}_{l=0}(-1)^{l}\binom{n_1-1}{l}\dfrac{1}{\alpha_1+\beta+n_1+n_2+l}\\
 -\dfrac{({\alpha}_1+\beta+n_1+n_2)_{{n}_1}(\alpha_2+\beta+n_1+n_2)_{n_2}}{({n}_2-1)!}\sum_{p=0}^{{n}_2-1}(-1)^p\dbinom{{n}_2-1}{p}\dfrac{1}{({\alpha}_2+\beta+{n}_2-1-p+n_1+n_2)}\\
 \times\dfrac{1}{(n_1-1)!}\sum^{n_1-1}_{l=0}(-1)^{{n}_2+l}\binom{n_1-1}{l}\dfrac{1}{{\alpha}_2+{n}_2-1-p-\alpha_1-l}.
 \end{multlined}
\end{align*}
Doing the index interchanges $p\rightarrow n_2-1-p$ and $l\rightarrow n_1-1-l$ on the second summation we find that
\begin{align*}
 I^{(1),n_1+n_2-1}_{(n_1,n_2)}
 &=\begin{multlined}[t][.7\textwidth]
  \dfrac{({\alpha}_1+\beta+n_1+n_2)_{{n}_1}(\alpha_2+\beta+n_1+n_2)_{n_2}}{({\alpha}_2+\beta+n_1+n_2)_{{n}_2}}\dfrac{1}{(n_1-1)!}\sum^{n_1-1}_{l=0}(-1)^{l}\binom{n_1-1}{l}\dfrac{1}{\alpha_1+\beta+n_1+n_2+l}\\
 -\dfrac{({\alpha}_1+\beta+n_1+n_2)_{{n}_1}(\alpha_2+\beta+n_1+n_2)_{n_2}}{({n}_2-1)!}\sum^{{n}_2-1}_{p=0}(-1)^{n_1+p}\dbinom{{n}_2-1}{p}\dfrac{1}{({\alpha}_2+\beta+n_1+n_2+p)}\\
 \times\dfrac{1}{(n_1-1)!}\sum^{n_1-1}_{l=0}(-1)^{l}\binom{n_1-1}{l}\dfrac{1}{{\alpha}_2+p-(\alpha_1+n_1-1)+l}.
 \end{multlined}
\end{align*}
Finally, we use that
\begin{align*}
 \dfrac{1}{(n-1)!}\sum_{l=0}^{n-1}(-1)^l\dbinom{n-1}{l}\dfrac{1}{z-a+l}=\dfrac{1}{(z-a)_n},
\end{align*}
to deal with the last summation and get that
\begin{align*}
 I^{(1),n_1+n_2-1}_{(n_1,n_2)}
 &=\begin{multlined}[t][.7\textwidth]
  \dfrac{({\alpha}_1+\beta+n_1+n_2)_{{n}_1}(\alpha_2+\beta+n_1+n_2)_{n_2}}{({\alpha}_2+\beta+n_1+n_2)_{{n}_2}(\alpha_1+\beta+n_1+n_2)_{n_1}}
  -\dfrac{({\alpha}_1+\beta+n_1+n_2)_{{n}_1}(\alpha_2+\beta+n_1+n_2)_{n_2}}{({n}_2-1)!}\\
 \times\sum^{{n}_2-1}_{p=0}(-1)^{n_1+p}\dbinom{{n}_2-1}{p}\dfrac{1}{({\alpha}_2+\beta+n_1+n_2+p)({\alpha}_2+p-(\alpha_1+n_1-1))_{n_1}}
 \end{multlined}\\
 &=1-I^{(2),n_1+n_2-1}_{(n_1,n_2)},
\end{align*}
and \eqref{ortogonalidadHahnIfinal} is proven.
\end{proof}

\subsection{Biorthogonality}


With the Equations \eqref{ortogonalidadHahnInofinal},\eqref{ortogonalidadHahnIfinal} and \eqref{ortogonalidadHahnII} we can prove the following biorthogonal relations between Hahn multiple orthogonal polynomials of type I \eqref{HahnTipoI} and type II \eqref{HahnTipoII}:
\begin{teo}
\label{biortogonalidadHahn}
For $(n_1,n_2), (m_1,m_2)\in\N_0^2$, the Hahn multiple orthogonal polynomials of type I \eqref{HahnTipoI}) and type II \eqref{HahnTipoII} satisfy the following discrete biorthogonality relations 
\begin{align*}
 \sum_{k=0}^N\left(Q_{(n_1,n_2)}^{(1)}(k)\omega_1(k)+Q_{(n_1,n_2)}^{(2)}(k)\omega_2(k)\right) Q_{(m_1,m_2)}(k)=
 \begin{cases}
 0, &\text{if $ n_1\leq m_1, n_2\leq m_2$},\\
 1, &\text{if $m_1+m_2=n_1+n_2-1$},\\
 0, &\text{if $m_1+m_2\leq n_1+n_2-2$}.
 \end{cases}
\end{align*}
\end{teo}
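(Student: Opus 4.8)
The plan is to derive the biorthogonality from the two orthogonality statements already at our disposal: the type I relations of Proposition \ref{laprop}, expressed in the basis $\{(-N+k)_j\}$, and the type II relations \eqref{ortogonalidadHahnII}, expressed in the basis $\{(-k)_j\}$. The whole argument is a matter of choosing, in each of the three regimes, the basis in which the appropriate factor is expanded. Throughout I abbreviate the linear form as $\mathscr Q_{(n_1,n_2)}(k)=Q_{(n_1,n_2)}^{(1)}(k)\omega_1(k)+Q_{(n_1,n_2)}^{(2)}(k)\omega_2(k)$ and write $S$ for the sum to be evaluated.

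First I would dispose of the two branches with $m_1+m_2\leq n_1+n_2-1$, that is the last two cases. Since $\{(-N+k)_j\}_{j\geq 0}$ is a graded basis of the polynomials in $k$ and $Q_{(m_1,m_2)}$ is monic of degree exactly $m_1+m_2$ (the system being perfect), there is a unique expansion
\begin{align*}
 Q_{(m_1,m_2)}(k)=\sum_{j=0}^{m_1+m_2}c_j\,(-N+k)_j,\qquad c_{m_1+m_2}=1.
\end{align*}
Inserting this into $S$ and interchanging the two finite sums gives
\begin{align*}
 S=\sum_{j=0}^{m_1+m_2}c_j\sum_{k=0}^{N}\mathscr Q_{(n_1,n_2)}(k)\,(-N+k)_j.
\end{align*}
By Proposition \ref{laprop} the inner sum vanishes for every $j\in\{0,\dots,n_1+n_2-2\}$ and equals $1$ for $j=n_1+n_2-1$. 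Consequently, if $m_1+m_2\leq n_1+n_2-2$ all terms vanish and $S=0$, whereas if $m_1+m_2=n_1+n_2-1$ only the top term survives and $S=c_{m_1+m_2}=1$.

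It remains to handle the branch $n_1\leq m_1$, $n_2\leq m_2$. Now I would keep the two-term form and split
\begin{align*}
 S=\sum_{a=1}^{2}\sum_{k=0}^{N}Q_{(n_1,n_2)}^{(a)}(k)\,\omega_a(k)\,Q_{(m_1,m_2)}(k).
\end{align*}
Using $\deg Q_{(n_1,n_2)}^{(a)}\leq n_a-1$, I expand each type I polynomial in the basis $\{(-k)_j\}$, writing $Q_{(n_1,n_2)}^{(a)}(k)=\sum_{j=0}^{n_a-1}d_j^{(a)}\,(-k)_j$. Because $n_a\leq m_a$, every index occurring satisfies $j\leq n_a-1\leq m_a-1$, so each resulting inner sum $\sum_{k=0}^{N}Q_{(m_1,m_2)}(k)\,\omega_a(k)\,(-k)_j$ vanishes by the type II orthogonality \eqref{ortogonalidadHahnII}. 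Both the $a=1$ and the $a=2$ contributions thus vanish, giving $S=0$.

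The argument is essentially degree bookkeeping, so I do not expect a genuine analytic obstacle. The two delicate points are, first, that the monic normalization of $Q_{(m_1,m_2)}$ is precisely what forces the normalized value $1$ in the second branch, and second, the matching of the degree bounds $n_a-1\leq m_a-1$ that makes the type II relations applicable term by term in the first branch. It is also worth noting that the three regimes are mutually exclusive---$n_1\leq m_1$, $n_2\leq m_2$ forces $m_1+m_2\geq n_1+n_2$, which is disjoint from $m_1+m_2\leq n_1+n_2-1$---so no consistency check between the two arguments is required.
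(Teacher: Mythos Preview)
Your proof is correct and follows essentially the same approach as the paper: both arguments dispatch the three regimes by expanding the appropriate factor in a Pochhammer-type basis and invoking Proposition~\ref{laprop} and the type~II relations~\eqref{ortogonalidadHahnII}, respectively. Your treatment of the case $m_1+m_2=n_1+n_2-1$ is slightly more direct, expanding $Q_{(m_1,m_2)}$ straight into the basis $\{(-N+k)_j\}$ where monicity gives $c_{m_1+m_2}=1$, whereas the paper passes through the basis $\{(-k)_j\}$ (leading coefficient $(-1)^{m_1+m_2}$) before switching to $(-N+k)_{n_1+n_2-1}$; the content is the same.
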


\begin{proof}
The case $n_1\leq m_1$, $n_2\leq m_2$ can be easily deduced from Hahn type II orthogonality relations \eqref{ortogonalidadHahnII} remembering that $\deg Q^{(a)}_{(n_1,n_2)}=n_a-1$ for $a\in\{1,2\}$. Analogously, the case $m_1+m_2\leq n_1+n_2-2$ can be easily deduced from orthogonality relations \eqref{ortogonalidadHahnInofinal} recalling that $\deg Q_{(m_1,m_2)}=m_1+m_2$.

In order to prove the remaining case we need to remind that the leading coefficient of $Q_{(m_1,m_2)}$ in the Pochhammer base $\{(-x)_j\}_{j\in\N_0}$ is $(-1)^{m_1+m_2}$. This leads to
 \begin{multline*}
   \sum_{k=0}^N\left(Q_{(n_1,n_2)}^{(1)}(k)\omega_1(k)+Q_{(n_1,n_2)}^{(2)}(k)\omega_2(k)\right) Q_{(m_1,m_2)}(k) \\\begin{aligned}
    &= \sum_{k=0}^N\left(Q_{(n_1,n_2)}^{(1)}(k)\omega_1(k)+Q_{(n_1,n_2)}^{(2)}(k)\omega_2(k)\right) (-1)^{n_1+n_2-1}(-k)_{n_1+n_2-1}\\
  &=\sum_{k=0}^N\left(Q_{(n_1,n_2)}^{(1)}(k)\omega_1(k)+Q_{(n_1,n_2)}^{(2)}(k)\omega_2(k)\right) (-N+k)_{n_1+n_2-1}=1.
   \end{aligned}
 \end{multline*}
Where the last equality is obtained using equations
\eqref{ortogonalidadHahnIfinal}.
\end{proof}

\subsection{Recurrence}

The type II orthogonal polynomials $Q_{(n_1,n_2)}(x)$ satisfy the near neighbors recursion relations described in Theorem \ref{Th:Recursion_Relations_II}, with recursion coefficients given by, cf. \cite{Arvesu}:
\begin{align}
 \label{bHahnpar}
 b^{(1)}&=\begin{multlined}[t][.85\textwidth]A(n_1,n_2,\alpha_1,\alpha_2,\beta,N)+A(n_2,n_1,\alpha_2,\alpha_1+1,\beta,N)
 +C(n_1+1,n_2+1,\alpha_1,\alpha_2,\beta,N)\\+D(n_1,n_2,\alpha_1,\alpha_2,\beta,N), \end{multlined}\\
 \label{cHahnpar}
 c&=\begin{multlined}[t][.85\textwidth]\big(A(n_1,n_2,\alpha_1,\alpha_2,\beta,N)+A(n_2,n_1,\alpha_2,\alpha_1+1,\beta,N)+D(n_1,n_2,\alpha_1,\alpha_2,\beta,N)\big)\\
  \times{C}(n_2,n_1+1,\alpha_2,\alpha_1,\beta,N)+A(n_1,n_2,\alpha_1,\alpha_2,\beta,N)B(n_1,n_2,\alpha_1,\alpha_2,\beta,N) , 
 \end{multlined}\\
 \label{dHahnpar}
 d^{(1)}&=A(n_1,n_2,\alpha_1,\alpha_2,\beta,N)B(n_1,n_2,\alpha_1,\alpha_2,\beta,N)C(n_1,n_2,\alpha_1,\alpha_2,\beta,N),
\end{align}
and $b^{(2)}(n_1,n_2,\alpha_1,\alpha_2)=b^{(1)}(n_2,n_1,\alpha_2,\alpha_1)$ and  $d^{(2)}(n_1,n_2,\alpha_1,\alpha_2)= d^{(1)}(n_2,n_1,\alpha_2,\alpha_1)$.
Here,
\begin{align}
\label{A}
 A&=\dfrac{n_1(n_1+n_2+\alpha_2+\beta)(n_1+n_2+\beta)(N+n_1+\alpha_1+\beta+1)}{(n_1+2n_2+\alpha_2+\beta)(2n_1+n_2+\alpha_1+\beta)(2n_1+n_2+\alpha_1+\beta+1)},\\
 B&=\dfrac{(n_1+\alpha_1-\alpha_2)(n_1+n_2+\alpha_1+\beta)(n_1+n_2+\beta-1)(N-n_1-n_2+1)}{(n_1+2n_2+\alpha_2+\beta-1)(2n_1+n_2+\alpha_1+\beta)(2n_1+n_2+\alpha_1+\beta-1)},\\
 C&=\dfrac{(n_1+\alpha_1)(n_1+n_2+\alpha_1+\beta-1)(n_1+n_2+\alpha_2+\beta-1)(N-n_1-n_2+2)}{(n_1+2n_2+\alpha_2+\beta-2)(2n_1+n_2+\alpha_1+\beta-2)(2n_1+n_2+\alpha_1+\beta-1)},\\
\label{D}
 D&=\dfrac{n_1n_2(n_1+n_2+\beta)}{(2n_1+n_2+\alpha_1+\beta+1)(n_1+2n_2+\alpha_2+\beta)}.
\end{align}
Hence, the Hahn multiple orthogonal of type I, $Q^{(a)}_{(n_1,n_2)}(x)$, $a\in\{1,2\}$, satisfy the type I near neighbor recursion relations given in Theorem \ref{Th:Recursion_Relations_I} with the recursion coefficients specified in \eqref{bHahnpar}-\eqref{dHahnpar}.

\section{Jacobi--Piñeiro multiple orthogonal polynomials of type I}

The type II  Jacobi--Piñeiro  polynomials are obtained from type II multiple Hahn polynomials from the limit, cf. \cite{AskeyII}, 
\begin{align*}
	&P_{(n_1,n_2)}(x,\alpha_1,\alpha_2,\beta)=\lim_{N\rightarrow\infty}\dfrac{(-1)^{n_1+n_2}}{(-N)_{n_1+n_2}}Q_{(n_1,n_2)}(Nx,\alpha_1,\alpha_2,\beta,N)
\end{align*}
so that we are lead to the following expression
\begin{align}
	\label{eq:JPII}
\left\{\begin{aligned}
		P_{(n_1,n_2)}&=\begin{multlined}[t][.9\textwidth]
		(-1)^{n_1+n_2}\dfrac{(\alpha_1+1)_{n_1}(\alpha_2+1)_{n_2}}{(n_1+n_2+\alpha_1+\beta+1)_{n_1}(n_1+n_2+\alpha_2+\beta+1)_{n_2}}\\
		\times\KF{1:3;1}{1:2;0}{(\alpha_1+\beta+n_1+1):(-n_2,\alpha_2+\beta+n_1+n_2+1,\alpha_1+n_1+1);(-n_1)}{(\alpha_1+1):(\alpha_2+1,\alpha_1+\beta+n_1+1);--}{x,x},
	\end{multlined}\\
	&=\begin{multlined}[t][.9\textwidth]
		(-1)^{n_1+n_2}\dfrac{(\alpha_1+1)_{n_1}(\alpha_2+1)_{n_2}}{(\alpha_1+\beta+n_1+n_2+1)_{n_1}(\alpha_2+\beta+n_1+n_2+1)_{n_2}}\\
		\times\sum_{l=0}^{n_1+n_2}
		\dfrac{(\alpha_1+\beta+n_1+1)_l}{(\alpha_1+1)_l}
		\sum_{m=0}^l\dfrac{(-n_1)_{m}(-n_2)_{l-m}}{m!(l-m)!}\dfrac{(\alpha_2+\beta+n_1+n_2+1)_{l-m}(\alpha_1+n_1+1)_{l-m}}{(\alpha_2+1)_{l-m}(\alpha_1+\beta+n_1+1)_{l-m}}x^l.
	\end{multlined}
\end{aligned}\right.
\end{align}

Observe that generalized hypergeometric expressions are also known for these monic polynomials, cf. \cite[Equation (23.3.5)]{Ismail}, so that
\begin{align*}
	P_{(n_1,n_2)}= (-1)^{n_1+n_2} \frac{(\alpha_1+1)_{n_1}(\alpha_2+1)_{n_2}}{(n_1+n_2+\alpha_1+\beta)_{n_1}(n_1+n_2+\alpha_2+\beta)_{n_2}}
	\frac{1}{(1-x)^\beta}\pFq{3}{2}{a_1-n_1-n_2-\beta,\alpha_1+n_1+1,\alpha_2+n_2+1}{\alpha_1+1,\alpha_2+1}{}.
\end{align*}
Hence, we get a reduction formula for the Kampé de Fériet series
\begin{multline*}
	\KF{1:3;1}{1:2;0}{(\alpha_1+\beta+n_1+1):(-n_2,\alpha_2+\beta+n_1+n_2+1,\alpha_1+n_1+1);(-n_1)}{(\alpha_1+1):(\alpha_2+1,\alpha_1+\beta+n_1+1);--}{x,x}\\=  \frac{1}{(1-x)^\beta}\pFq{3}{2}{a_1-n_1-n_2-\beta,\alpha_1+n_1+1,\alpha_2+n_2+1}{\alpha_1+1,\alpha_2+1}{x}.
\end{multline*}

The corresponding limits of the multiple Hahn  near neighbor recursion relations 
\begin{align*}
b_{\operatorname{JP}}^{(a)}(n_1,n_2,\alpha_1,\alpha_2,\beta)&=\lim_{N\rightarrow\infty}\dfrac{b^{(a)}(n_1,n_2,\alpha_1,\alpha_2,\beta,N)}{N},\\
c_{\operatorname{JP}}(n_1,n_2,\alpha_1,\alpha_2,\beta)&=\lim_{N\rightarrow\infty}\dfrac{c(n_1,n_2,\alpha_1,\alpha_2,\beta,N)}{N^2},
\\
d_{\operatorname{JP}}^{(a)}(n_1,n_2,\alpha_1,\alpha_2,\beta)&=\lim_{N\rightarrow\infty}\dfrac{d^{(a)}(n_1,n_2,\alpha_1,\alpha_2,\beta,N)}{N^3},
\end{align*}
leads to
\begin{align}\label{eq:RecursionJP}
\left\{\begin{aligned}
	b_{\operatorname{JP}}^{(1)}&=\dfrac{A(n_1,n_2,\alpha_1,\alpha_2,\beta,N)}{N+\alpha_1+\beta+n_1+1}+\dfrac{A(n_2,n_1,\alpha_2,\alpha_1+1,\beta,N)}{N+\alpha_2+\beta+n_2+1}+\dfrac{C(n_1+1,n_2+1,\alpha_1,\alpha_2,\beta,N)}{N-n_1-n_2},\\
c_{\operatorname{JP}}=&
\begin{multlined}[t][.82\textwidth]\left(\dfrac{A(n_1,n_2,\alpha_1,\alpha_2,\beta,N)}{N+\alpha_1+\beta+n_1+1}+\dfrac{A(n_2,n_1,\alpha_2,\alpha_1+1,\beta,N)}{N+\alpha_2+\beta+n_2+1}\right)
	\dfrac{C(n_2,n_1+1,\alpha_2,\alpha_1,\beta,N)}{N-n_2-n_1+1}\\
	+\dfrac{A(n_1,n_2,\alpha_1,\alpha_2,\beta,N)B(n_1,n_2,\alpha_1,\alpha_2,\beta,N)}{(N+\alpha_1+\beta+n_1+1)(N-n_1-n_2+1)},
\end{multlined}\\
d_{\operatorname{JP}}^{(1)}&=\dfrac{A(n_1,n_2,\alpha_1,\alpha_2,\beta,N)B(n_1,n_2,\alpha_1,\alpha_2,\beta,N)C(n_1,n_2,\alpha_1,\alpha_2,\beta,N)}{(N+\alpha_1+\beta+n_1+1)(N-n_1-n_2+1)(N-n_1-n_2+2)},
\end{aligned}\right.
\end{align}
With $b_{\operatorname{JP}}^{(2)}(n_1,n_2,\alpha_1,\alpha_2,\beta)=b_{\operatorname{JP}}^{(1)}(n_2,n_1,\alpha_2,\alpha_1,\beta)$ and $d_{\operatorname{JP}}^{(2)}(n_1,n_2,\alpha_1,\alpha_2,\beta)=d_{\operatorname{JP}}^{(1)}(n_2,n_1,\alpha_2,\alpha_1,\beta)$.

For  the type I Hahn multiple polynomials \eqref{HahnTipoI}, $a\in\{1,2\}$,  let us consider the corresponding limit process 
\begin{align}
	\label{JPIcomolimiteHahn}
	P^{(a)}_{(n_1,n_2)}(x,\alpha_1,\alpha_2,\beta)
	&=\lim_{N\rightarrow\infty}\dfrac{(-1)^{n_a-1}}{(-N)_{n_a-1}}\dfrac{1}{\Gamma(\alpha_a+1)
		\Gamma(\beta+1)}\dfrac{\Gamma(\alpha_a+\beta+N+n_a+1)}{(N+1-n_1-n_2)!}Q^{(a)}_{(n_1,n_2)}(Nx,\alpha_1,\alpha_2,\beta,N).
\end{align}

Coefficients \eqref{eq:RecursionJP} are the near neighbor recursion coefficients of the Jacobi--Piñeiro in \cite{Aptekarev} and the  limit of  the Hahn linear forms  $\mathscr Q_{(1,0)}$ and $\mathscr Q_{(1,1)}$ coincide with those linear forms of the Jacobi--Piñeiro case. Hence, Proposition \ref{pro:recurrence_unicity_typeI} implies  that these limit polynomials satisfy continuous orthogonality relations of the form
\begin{align*}
 \int^1_{0}x^j\left(P^{(1)}_{(n_1,n_2)}(x,\alpha_1,\alpha_2,\beta)\omega^{\operatorname{JP}}_1(x,\alpha_1)
 +P^{(2)}_{(n_1,n_2)}(x,\alpha_1,\alpha_2,\beta)\omega^{\operatorname{JP}}_2(x,\alpha_2)\right)\d\mu^{\operatorname{JP}}(x)=0,
\end{align*}
for $j\in\{0,\dots,n_1+n_2-2\}$, with respect to the weight functions and measure
\begin{align*}
 \omega^{\operatorname{JP}}_a(x,\alpha_,\beta)&=x^{\alpha_a}, & a&\in \{1,2\}, & \d\mu^{\operatorname{JP}}(x)&=(1-x)^{\beta}\d x.
\end{align*}
Applying the limit \eqref{JPIcomolimiteHahn} we find:
\begin{pro}[Hypergeometric expressions for type I Jacobi--Piñeiro]
	 	For $a\in\{1,2\}$, the  Jacobi--Piñeiro multiple orthogonal polynomials of type I biorthogonal to the Jacobi--Piñeiro multiple orthogonal polynomials of type II in Equation \eqref{eq:JPII}  are
\begin{align*}
  P^{(a)}_{(n_1,n_2)}&=
 \begin{multlined}[t][.9\textwidth]
  (-1)^{n_1+n_2-1}\dfrac{(\alpha_1+\beta+n_1+n_2)_{n_1}({\alpha}_2+\beta+n_1+n_2)_{{n}_2}}{(n_a-1)!(\hat{\alpha}_a-\alpha_a)_{\hat{n}_a}}
 \dfrac{\Gamma(\alpha_a+\beta+n_1+n_2)}{\Gamma(\beta+n_1+n_2)\Gamma(\alpha_a+1)}\\
 \times\pFq{3}{2}{-n_a+1,\alpha_a+\beta+n_1+n_2,\alpha_a-\hat{\alpha}_a-\hat{n}_a+1}{\alpha_a+1,\alpha_a-\hat{\alpha}_a+1}{x},
 \end{multlined}\\
 &=\begin{multlined}[t][.9\textwidth]
  (-1)^{n_1+n_2-1}\dfrac{(\alpha_1+\beta+n_1+n_2)_{n_1}({\alpha}_2+\beta+n_1+n_2)_{{n}_2}}{(n_a-1)!(\hat{\alpha}_a-\alpha_a)_{\hat{n}_a}}
 \dfrac{\Gamma(\alpha_a+\beta+n_1+n_2)}{\Gamma(\beta+n_1+n_2)\Gamma(\alpha_a+1)}\\
 \times\sum_{l=0}^{n_a-1}\dfrac{(-n_a+1)_l}{l!}\dfrac{(\alpha_a+\beta+n_1+n_2)_l(\alpha_a-\hat{\alpha}_a-\hat{n}_a+1)_l}{(\alpha_a+1)_l(\alpha_a-\hat{\alpha}_a+1)_l}x^l.
 \end{multlined}
\end{align*}
\end{pro}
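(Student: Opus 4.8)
The plan is to prove the statement in two stages, the first of which is essentially already in place. The discussion preceding the Proposition shows that the limit polynomials $P^{(a)}_{(n_1,n_2)}$ defined by \eqref{JPIcomolimiteHahn} are genuinely biorthogonal to the type II family \eqref{eq:JPII}: the limits \eqref{eq:RecursionJP} of the Hahn near neighbor recursion coefficients together with the limiting initial forms $\mathscr Q_{(1,0)}$, $\mathscr Q_{(1,1)}$ coincide with the Jacobi--Piñeiro data, so Proposition \ref{pro:recurrence_unicity_typeI} forces the identification. What remains, and what I would actually carry out, is to evaluate the limit \eqref{JPIcomolimiteHahn} on the \emph{explicit} Hahn expression \eqref{HahnTipoI}--\eqref{coefHahnI} and verify that it reduces to the asserted ${}_3F_2$.

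First I would substitute $x\mapsto Nx$ in the sum form of \eqref{HahnTipoI}, so that each basis element becomes $(-Nx)_l$, and multiply by the normalizing factor of \eqref{JPIcomolimiteHahn}. The $N$-dependent pieces then organize themselves neatly: the factor $(N+1-n_1-n_2)!$ cancels against its reciprocal in $C^{(a),l}_{(n_1,n_2)}$, and the combination $\Gamma(\alpha_a+\beta+N+n_a+1)/(\alpha_a+\beta+n_1+n_2+n_a)_{N+1-n_1-n_2}$ collapses to the $N$-independent constant $\Gamma(\alpha_a+\beta+n_1+n_2+n_a)$. The only surviving $N$-dependence sits in $(-N)_{n_a-1}$, $(-N)_l$, $(-N)_{l+m}$ and $(-Nx)_l$; writing $(-N)_{l+m}/(-N)_l=(-N+l)_m$ and using $(-N)_j\sim(-1)^jN^j$, the $(l,m)$ term of the double sum behaves like $N^{\,l+m-(n_a-1)}$. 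Because the inner summation is bounded by $m\le n_a-1-l$, no term blows up, the terms with $l+m<n_a-1$ are annihilated, and only the antidiagonal $m=n_a-1-l$ survives. This is the crucial simplification: the limit collapses the Kampé de Fériet double sum to a single sum over $l$, with $(-Nx)_l\to(-1)^lN^lx^l$ producing the monomial $x^l$.

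Next I would reduce the surviving coefficient to the target form using Pochhammer identities. The evaluation $(-n_a+1)_{n_a-1}=(-1)^{n_a-1}(n_a-1)!$ disposes of the $l+m=n_a-1$ numerator, while the reversal identity $(c)_{n-k}=(-1)^k(c)_n/(1-c-n)_k$ applied with $c=\hat{\alpha}_a-\alpha_a-n_a+1$, $n=n_a-1$, $k=l$ turns $(\hat{\alpha}_a-\alpha_a-n_a+1)_{n_a-1-l}$ into $(-1)^l(\hat{\alpha}_a-\alpha_a-n_a+1)_{n_a-1}/(\alpha_a-\hat{\alpha}_a+1)_l$. This manufactures exactly the $l$-dependent factor
\begin{align*}
\frac{(-n_a+1)_l}{l!}\frac{(\alpha_a+\beta+n_1+n_2)_l(\alpha_a-\hat{\alpha}_a-\hat{n}_a+1)_l}{(\alpha_a+1)_l(\alpha_a-\hat{\alpha}_a+1)_l},
\end{align*}
recognizable as the general term of the claimed ${}_3F_2$.

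The main obstacle is the remaining $l$-independent constant, where the signs and the normalization must conspire correctly. I would gather the explicit $(-1)^{n_a-1}$ of \eqref{JPIcomolimiteHahn}, the explicit $(-1)^{n_a-1}$ of \eqref{coefHahnI}, the sign from $(-n_a+1)_{n_a-1}=(-1)^{n_a-1}(n_a-1)!$, and the sign from $(-n_1-n_2+2)_{n_a-1}=(-1)^{n_a-1}(n_1+n_2-2)!/(\hat{n}_a-1)!$ (the $N\to\infty$ ratio $(-N+l)_m(-Nx)_l/(-N)_{n_a-1}$ contributing no net sign on the antidiagonal); these four factors cancel to $+1$. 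The delicate point is the ratio $(\hat{\alpha}_a-\alpha_a-n_a+1)_{n_a-1}/(\alpha_a-\hat{\alpha}_a-\hat{n}_a+1)_{n_1+n_2-1}$, which by the reflection formula for the Gamma function (using that $n_a$ and $\hat{n}_a$ are integers, so that $\cos\pi n_a=(-1)^{n_a}$ etc.) equals $(-1)^{n_1+n_2-1}/(\hat{\alpha}_a-\alpha_a)_{\hat{n}_a}$; this single computation supplies both the global sign $(-1)^{n_1+n_2-1}$ and the denominator $(\hat{\alpha}_a-\alpha_a)_{\hat{n}_a}$ of the statement. The rest is routine bookkeeping: $\Gamma(\beta+1)(\beta+1)_{n_1+n_2-1}=\Gamma(\beta+n_1+n_2)$, the telescoping $(\hat{\alpha}_a+\beta+\hat{n}_a+1)_{n_1+n_2-1}/(\hat{\alpha}_a+\beta+\hat{n}_a+1)_{n_a-1}=(\hat{\alpha}_a+\beta+n_1+n_2)_{\hat{n}_a}$, the factorial collapse $(\hat{n}_a-1)!/\big((n_1-1)!(n_2-1)!\big)=1/(n_a-1)!$, and finally the symmetric identity $(\alpha_a+\beta+n_1+n_2)_{n_a}(\hat{\alpha}_a+\beta+n_1+n_2)_{\hat{n}_a}=(\alpha_1+\beta+n_1+n_2)_{n_1}(\alpha_2+\beta+n_1+n_2)_{n_2}$ (valid since $\{(\alpha_a,n_a),(\hat{\alpha}_a,\hat{n}_a)\}=\{(\alpha_1,n_1),(\alpha_2,n_2)\}$) together yield the prefactor $(-1)^{n_1+n_2-1}\frac{(\alpha_1+\beta+n_1+n_2)_{n_1}(\alpha_2+\beta+n_1+n_2)_{n_2}}{(n_a-1)!(\hat{\alpha}_a-\alpha_a)_{\hat{n}_a}}\frac{\Gamma(\alpha_a+\beta+n_1+n_2)}{\Gamma(\beta+n_1+n_2)\Gamma(\alpha_a+1)}$, completing the proof.
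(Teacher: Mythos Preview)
Your proposal is correct and follows exactly the route the paper indicates: establish biorthogonality via Proposition~\ref{pro:recurrence_unicity_typeI} and then evaluate the limit \eqref{JPIcomolimiteHahn} on the explicit Hahn data \eqref{HahnTipoI}--\eqref{coefHahnI}, with the key observation that only the antidiagonal $m=n_a-1-l$ survives as $N\to\infty$ so that the Kamp\'e de F\'eriet double sum collapses to the claimed ${}_3F_2$. The paper merely writes ``Applying the limit \eqref{JPIcomolimiteHahn} we find'' and states the result, so your argument supplies the details the paper omits; one small point you compress is that the leftover $1/(n_a-1-l)!$ from $m!$ combines with the $(-1)^l$ from the reversal identity via $(-n_a+1)_l=(-1)^l(n_a-1)!/(n_a-1-l)!$ to produce the $(-n_a+1)_l$ in the ${}_3F_2$ summand.
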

\begin{rem}
	These formulas  extend the ones found  in \cite[section 4.2]{JP} in the step-line.
\end{rem}

\

\enlargethispage{1cm}
\section{Meixner I multiple orthogonal polynomials of type I}

The type II Meixner I multiple orthogonal  polynomials appear from type II multiple Hahn polynomials  when the limit, cf. \cite{AskeyII}, 
\begin{align*}
	M_{(n_1,n_2)}(x,\beta,c_1,c_2)=\lim_{N\rightarrow\infty}Q_{(n_1,n_2)}\left(x,c_1N,c_2N,-N,-\beta\right)
\end{align*}
is taken. Then, one is lead to the following expressions
\begin{align}
	\label{Meixner1KindII}
\left\{\begin{aligned}
		M_{(n_1,n_2)}(x,\beta,c_1,c_2)&=\left(\dfrac{c_1}{c_1-1}\right)^{n_1}\left(\dfrac{c_2}{c_2-1}\right)^{n_2}(\beta)_{n_1+n_2}
	\KF{1:1;1}{1:0;0}{(-x):(-n_2);(-n_1)}{(\beta):--;--}{\dfrac{c_2-1}{c_2},\dfrac{c_1-1}{c_1}}\\
	&=\left(\dfrac{c_1}{c_1-1}\right)^{n_1}\left(\dfrac{c_2}{c_2-1}\right)^{n_2}(\beta)_{n_1+n_2}
	\sum_{l=0}^{n_1+n_2}\sum_{m=0}^l\dfrac{(-n_1)_m(-n_2)_{l-m}}{(\beta)_lm!(l-m)!}\left(\dfrac{c_1-1}{c_1}\right)^m\left(\dfrac{c_2-1}{c_2}\right)^{l-m}(-x)_l.
\end{aligned}\right.
\end{align}
The corresponding limit of the Hahn near neighbor recursion coefficients is
\begin{align*}
	b_{\operatorname{MI}}^{a)}(n_1,n_2,\beta,c_1,c_2)&=\lim_{N\rightarrow\infty} b^{(a)}\left(n_1,n_2,c_1N,c_2N,-N,-\beta\right),\\
	c_{\operatorname{MI}}(n_1,n_2,\beta,c_1,c_2)&=\lim_{N\rightarrow\infty} c\left(n_1,n_2,c_1N,c_2N,-N,-\beta\right),\\
	d_{\operatorname{MI}}^{(a)}(n_1,n_2,\beta,c_1,c_2)&=\lim_{N\rightarrow\infty} d^{(a)}\left(n_1,n_2,c_1N,c_2N,-N,-\beta\right),
\end{align*}
so that
\begin{align}\label{eq:recursionMI}
\left\{\begin{gathered}
	\begin{aligned}
		b_{\operatorname{MI}}^{(1)}&=n_1\dfrac{1+c_1}{1-c_1}+n_2\left(\dfrac{c_1}{1-c_1}+\dfrac{c_2}{1-c_2}+1\right)+\dfrac{c_1}{1-c_1}\beta,&
	b_{\operatorname{MI}}^{(2)}&=n_2\dfrac{1+c_2}{1-c_2}+n_1\left(\dfrac{c_1}{1-c_1}+\dfrac{c_2}{1-c_2}+1\right)+\dfrac{c_2}{1-c_2}\beta,
\end{aligned}\\
	c_{\operatorname{MI}}=(\beta+n_1+n_2-1)\left(n_1\dfrac{c_1}{(1-c_1)^2}+n_2\dfrac{c_2}{(1-c_2)^2}\right),\\
\begin{aligned}
		d_{\operatorname{MI}}^{(1)}&=\dfrac{n_1(\beta+n_1+n_2-2)(\beta+n_1+n_2-1)c_1(c_1-c_2)}{(1-c_1)^3(1-c_2)},&
	d_{\operatorname{MI}}^{(2)}&=\dfrac{n_2(\beta+n_1+n_2-2)(\beta+n_1+n_2-1)c_2(c_2-c_1)}{(1-c_1)(1-c_2)^3}.
\end{aligned}
\end{gathered}\right.
\end{align}

For $a\in\{1,2\}$, let us discuss the corresponding limit process over the Hahn multiple orthogonal polynomials of type I in Equation \eqref{HahnTipoI}
\begin{multline}
	\label{Meixner1KindcomolimiteHahnI}
	M_{(n_1,n_2)}^{(a)}(x,\beta,c_1,c_2)=\dfrac{(1-c_a)^{n_1+n_2-1+\beta}}{\Gamma(-\beta+1)}\\\times\lim_{N\rightarrow\infty}(-N+1)_{n_1+n_2-1}
	\dfrac{\Gamma((c_a-1)N+n_a+1-\beta)}{\Gamma((c_a-1)N+n_1+n_2+n_a)} Q^{(a)}_{(n_1,n_2)}\left(x,c_1N,c_2N,-N,-\beta\right).
\end{multline}
Coefficients in \eqref{eq:recursionMI} coincide with the ones at \cite{Arvesu} and the  limit of  the Hahn linear forms  $\mathscr Q_{(1,0)}$ and $\mathscr Q_{(1,1)}$ coincide with those linear forms of the Meixner I case. Therefore, using Proposition \ref{pro:recurrence_unicity_typeI} we conclude that these polynomials satisfy discrete orthogonality relations of the form
\begin{align*}
 \sum^\infty_{k=0}(k+\beta)_j\left(M_{(n_1,n_2)}^{(1)}(k,\beta,c_1,c_2)\omega^{\operatorname{MI}}_1(k,\beta,c_1)
 +M_{(n_1,n_2)}^{(2)}(k,\beta,c_1,c_2)\omega^{\operatorname{MI}}_2(k,\beta,c_2)\right)=0,
\end{align*}
for $j=0,\dots,n_1+n_2-2$, with respect to the weight functions
\begin{align*}
 \omega^{\operatorname{MI}}_a(x,\beta,c_a)&=\dfrac{\Gamma(\beta+x)c_a^x}{\Gamma(\beta)\Gamma(x+1)}, & a&\in\{1,2 \} .
\end{align*}
In this case the parameter region is $\beta>0$, $0<c_1,c_2<1$ and $c_1\neq c_2$.
Consequently, polynomials \eqref{Meixner1KindcomolimiteHahnI} are the Meixner I multiple orthogonal polynomials of type I. Explicitly, applying the limit, we find
\begin{pro}[Hypergeometric expressions for type I multiple Meixner I]
		For $a\in\{1,2\}$, the  Meixner I multiple orthogonal polynomials of type I biorthogonal to the Meixner I multiple orthogonal polynomials of type II in Equation \eqref{Meixner1KindII} are
	\begin{align*}
 M_{(n_1,n_2)}^{(a)}
 &=\begin{multlined}[t][.9\textwidth]  
\dfrac{(-1)^{{n}_a-1}(n_1+n_2-2)!(1-c_a)^{n_1+n_2-1+\beta}}{(n_1-1)!(n_2-1)!(\beta)_{n_1+n_2-1}}\left(\dfrac{1-\hat{c}_a}{c_a-\hat{c}_a}\right)^{n_1+n_2-1}\\
\times \KF{2:1;0}{1:1;0}{(-n_a+1,\beta): (-x);--}{(-n_1-n_2+2): ( \beta); --}{\dfrac{(c_a-\hat{c}_a)(1-c_a)}{c_a(1-\hat{c}_a)},\dfrac{c_a-\hat{c}_a}{1-\hat{c}_a}}
 \end{multlined}\\
 &=\begin{multlined}[t][.9\textwidth]
  \dfrac{(-1)^{{n}_a-1}(n_1+n_2-2)!(1-c_a)^{n_1+n_2-1+\beta}}{(n_1-1)!(n_2-1)!(\beta)_{n_1+n_2-1}}\left(\dfrac{1-\hat{c}_a}{c_a-\hat{c}_a}\right)^{n_1+n_2-1}\\\times\sum_{l=0}^{n_a-1}\sum_{m=0}^{n_a-1-l}
 \dfrac{(-n_a+1)_{l+m}(\beta)_{l+m}}{l!m!(-n_1-n_2+2)_{l+m}( \beta)_l}\left(\dfrac{(c_a-\hat{c}_a)(1-c_a)}{c_a(1-\hat{c}_a)}\right)^l\left(\dfrac{c_a-\hat{c}_a}{1-\hat{c}_a}\right)^m(-x)^l.
 \end{multlined}
\end{align*}
	Where we have defined $\hat{c}_a\coloneq \delta_{a,1}c_2+\delta_{a,2}c_1$.
\end{pro}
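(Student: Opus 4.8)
The plan is to read off the Meixner I polynomials directly from the explicit Hahn formula \eqref{HahnTipoI} by carrying out the limit \eqref{Meixner1KindcomolimiteHahnI}; the orthogonality is already delivered by Proposition \ref{pro:recurrence_unicity_typeI}, so the only task is the asymptotic evaluation. I would substitute the Askey-scheme specialization $\alpha_1\mapsto c_1N$, $\alpha_2\mapsto c_2N$, $\beta\mapsto-N$, $N\mapsto-\beta$ into \eqref{coefHahnI} and let $N\to\infty$ in the region $\beta>0$, $0<c_1,c_2<1$, $c_1\neq c_2$. Since the double-sum form $Q^{(a)}_{(n_1,n_2)}=\sum_{l=0}^{n_a-1}C^{(a),l}_{(n_1,n_2)}(-x)_l$ is a finite sum, the limit can be taken term by term, and I would split the analysis into the constant prefactor of \eqref{coefHahnI} and the $(l,m)$-summand.

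First I would dispose of the two $N$-growing Pochhammer symbols in the prefactor. The factor $(\beta+1)_{n_1+n_2-1}\mapsto(-N+1)_{n_1+n_2-1}$ is cancelled verbatim by the $(-N+1)_{n_1+n_2-1}$ built into \eqref{Meixner1KindcomolimiteHahnI}; and $(\alpha_a+\beta+n_1+n_2+n_a)_{N+1-n_1-n_2}$ specializes to $((c_a-1)N+n_1+n_2+n_a)_{-\beta+1-n_1-n_2}$, which, read as a ratio of Gammas, equals $\Gamma((c_a-1)N+n_a+1-\beta)/\Gamma((c_a-1)N+n_1+n_2+n_a)$. This is exactly where the peculiar shape of the normalization pays off: it is the precise reciprocal of the Gamma quotient inserted in \eqref{Meixner1KindcomolimiteHahnI}, so the two cancel identically already at finite $N$. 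I expect this exact cancellation, rather than an asymptotic estimate, to be the crux of the prefactor computation.

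What then survives in the prefactor is the quotient $(\hat{\alpha}_a+\beta+\hat{n}_a+1)_{n_1+n_2-1}/(\alpha_a-\hat{\alpha}_a-\hat{n}_a+1)_{n_1+n_2-1}$; using the elementary asymptotics $(zN+\text{const})_k\sim(zN)^k$ this tends to $(-1)^{n_1+n_2-1}\left(\tfrac{1-\hat{c}_a}{c_a-\hat{c}_a}\right)^{n_1+n_2-1}$. For the remaining constants I would combine the specialized Hahn factorial $(N+1-n_1-n_2)!=\Gamma(-\beta+2-n_1-n_2)$ with the explicit $1/\Gamma(-\beta+1)$ and $(1-c_a)^{n_1+n_2-1+\beta}$ of \eqref{Meixner1KindcomolimiteHahnI}, and apply the reflection identity $\Gamma(-\beta+2-n_1-n_2)/\Gamma(-\beta+1)=(-1)^{n_1+n_2-1}/(\beta)_{n_1+n_2-1}$; its sign absorbs the one just produced and leaves precisely the claimed prefactor $\dfrac{(-1)^{n_a-1}(n_1+n_2-2)!(1-c_a)^{n_1+n_2-1+\beta}}{(n_1-1)!(n_2-1)!(\beta)_{n_1+n_2-1}}\left(\dfrac{1-\hat{c}_a}{c_a-\hat{c}_a}\right)^{n_1+n_2-1}$.

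For the summand I would use $(zN+\text{const})_k\sim(zN)^k$ on every Pochhammer carrying $\alpha_a=c_aN$ or $\hat{\alpha}_a=\hat{c}_aN$, together with $(-N)_l\mapsto(\beta)_l$, $(-N)_{l+m}\mapsto(\beta)_{l+m}$ from $N\mapsto-\beta$. The point to check — and the second place where care is needed — is a power count showing the $N^{l}$ from the $l$-block cancels the $N^{-l}$ from the $m$-block, so each term has a finite limit; regrouping the surviving constants should collapse the $l$-power to $X^l$ with $X=\tfrac{(c_a-\hat{c}_a)(1-c_a)}{c_a(1-\hat{c}_a)}$ and the $m$-power to $Y^m$ with $Y=\tfrac{c_a-\hat{c}_a}{1-\hat{c}_a}$, reproducing $\sum_{l,m}\tfrac{(-n_a+1)_{l+m}(\beta)_{l+m}}{l!\,m!\,(-n_1-n_2+2)_{l+m}(\beta)_l}X^lY^m(-x)_l$; by the definition \eqref{KF} this is the expansion of the asserted $F^{2:1;0}_{1:1;0}$ series, so both displayed forms follow at once. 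The whole computation is uniform in $a\in\{1,2\}$ thanks to the hatted notation, so no separate argument for $a=2$ is needed. The main obstacle is therefore not conceptual but bookkeeping: keeping the exact Gamma cancellation and the term-wise cancellation of powers of $N$ under control.
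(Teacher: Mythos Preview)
Your proposal is correct and follows exactly the paper's approach: the paper establishes the orthogonality via Proposition \ref{pro:recurrence_unicity_typeI} and then simply states ``Explicitly, applying the limit, we find'' without further detail, so your term-by-term asymptotic computation of \eqref{Meixner1KindcomolimiteHahnI} applied to \eqref{coefHahnI} is precisely the omitted calculation. The exact Gamma cancellation you single out and the $N^{l}$/$N^{-l}$ balance between the $l$-block and the $m$-block are indeed the only two points requiring care, and both are handled correctly.
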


\begin{rem}
 To the best of our knowledge these explicit expressions were not known.
\end{rem}


\section{Meixner II multiple orthogonal polynomials of type I}

The type II Meixner II multiple orthogonal  polynomials can be gotten from type II multiple Hahn polynomials performing the following limit, cf. \cite{AskeyII}, 
\begin{align*}
	M_{(n_1,n_2)}(x;\beta_1,\beta_2,c)=\lim_{N\rightarrow\infty}Q_{(n_1,n_2)}\left(x;\beta_1-1,\beta_2-1,\dfrac{1-c}{c}N,N\right).
\end{align*}
and corresponding type II polynomials are found to be 
\begin{align}
	\label{Meixner2KindII}
\left\{\begin{aligned}
		M_{(n_1,n_2)}
	&=\left(\dfrac{c}{c-1}\right)^{n_1+n_2}(\beta_1)_{n_1}(\beta_2)_{n_2}\KF{1:1;2}{1:0;1}{(-x):(-n_1);(-n_2,\beta_1+n_1)}{(\beta_1):--;(\beta_2)}{\dfrac{c-1}{c},\dfrac{c-1}{c}}\\
	&=\left(\dfrac{c}{c-1}\right)^{n_1+n_2}(\beta_1)_{n_1}(\beta_2)_{n_2}\sum^{n_1+n_2}_{l=0}\sum^{l}_{m=0}\dfrac{(-n_1)_m(-n_2)_{l-m}(\beta_1+n_1)_{l-m}}{(\beta_1)_{l}(\beta_2)_{l-m} m!(l-m)!}\left(\dfrac{c-1}{c}\right)^{l}(-x)_{l}.
\end{aligned}\right.
\end{align}
The limit for the Hahn near neighbor recursion coefficients  
\begin{align*}
	b_{\operatorname{MII}}^{(a)}(n_1,n_2,\beta_1,\beta_2,c)&=\lim_{N\rightarrow\infty}b^{(a)}
	\left(n_1,n_2,\beta_1-1,\beta_2-1,\frac{1-c}{c}N,N\right),\\
	c_{\operatorname{MII}}(n_1,n_2,\beta_1,\beta_2,c)&=\lim_{N\rightarrow\infty}c\left(n_1,n_2,\beta_1-1,\beta_2-1,\frac{1-c}{c}N,N\right),\\
	d_{\operatorname{MII}}^{(a)}(n_1,n_2,\beta_1,\beta_2,c)&=\lim_{N\rightarrow\infty}d^{(a)}\left(n_1,n_2,\beta_1-1,\beta_2-1,\frac{1-c}{c}N,N
	\right),
\end{align*}
delivers
\begin{align}
	\label{eq:recursionMII}
\left\{\begin{gathered}
	\begin{aligned}
		b_{\operatorname{MII}}^{(1)}&=n_1+n_2+\dfrac{c}{1-c}(\beta_1+n_1+n_2+n_1),&
	b_{\operatorname{MII}}^{(2)}&=n_1+n_2+\dfrac{c}{1-c}(\beta_2+n_1+n_2+n_2),
\end{aligned}\\
	c_{\operatorname{MII}}=\dfrac{c}{(1-c)^2}(n_1n_2+n_1(n_1+\beta_1-1)+n_2(n_2+\beta_2-1)),\\
\begin{aligned}
		d_{\operatorname{MII}}^{(1)}&=\dfrac{c^2}{(1-c)^3}n_1(n_1+\beta_1-1)(n_1+\beta_1-\beta_2),&
	d_{\operatorname{MII}}^{(2)}&=\dfrac{c^2}{(1-c)^3}n_2(n_2+\beta_2-1)(n_2+\beta_2-\beta_1).
\end{aligned}
\end{gathered}\right.
\end{align}
For the Hahn polynomials \eqref{HahnTipoI}  we consider the corresponding limit process  
\begin{multline}
	\label{Meixner2KindcomolimiteHahnI}
	M_{(n_1,n_2)}^{(a)}(x;\beta_1,\beta_2,c)=\dfrac{(1-c)^{\beta_a+n_1+n_2+n_a-2}}{c^{n_1+n_2-1}}\\
	\times\lim_{N\rightarrow\infty}\dfrac{(\beta_a+\frac{1-c}{c}N+n_1+n_2+n_a-1)_{N+1-n_1-n_2}}{(N+1-n_1-n_2)!}Q_{(n_1,n_2)}^{(a)}\left(x;\beta_1-1,\beta_2-1,\dfrac{1-c}{c}N,N\right),
\end{multline}
Coefficients  \eqref{eq:recursionMII} are precisely the coefficients in \cite{Arvesu} and  the  limit of  the Hahn linear forms  $\mathscr Q_{(1,0)}$ and $\mathscr Q_{(1,1)}$ coincide with those linear forms of the Meixner II case. Therefore, from Proposition \ref{pro:recurrence_unicity_typeI} we deduce that
these polynomials satisfy discrete orthogonality relations of the form
\begin{align*}
\sum^\infty_{k=0}(-k)_j\left(M^{(1)}_{(n_1,n_2)}(k,\beta_1,\beta_2,c)\omega^{\operatorname{MII}}_1(k,\beta_1,c)
+M_{(n_1,n_2)}^{(2)}(k,\beta_1,\beta_2,c)\omega^{\operatorname{MII}}_2(k,\beta_2,c)\right)=0,
\end{align*}
for $j\in\{0,\dots,n_1+n_2-2\}$, with respect to the weight functions
\begin{align*}
 \omega^{\operatorname{MII}}_a(x,\beta_a,c)&=\dfrac{\Gamma(\beta_a+x)c^x}{\Gamma(\beta_a)\Gamma(x+1)}, & a&\in\{1,2\}.
\end{align*}
This time we have that $\beta_1,\beta_2>0$; $0<c<1$ and $\beta_1-\beta_2\not\in\mathbb Z$.
So, we have that \eqref{Meixner2KindcomolimiteHahnI} are the multiple Meixner II polynomials of type I. 
For $a\in\{1,2\}$, the limit \eqref{Meixner2KindcomolimiteHahnI} gives:
\begin{pro}[Hypergeometric expressions for type I multiple Meixner II]
			For $a\in\{1,2\}$, the  Meixner II multiple orthogonal polynomials of type I biorthogonal to the Meixner II multiple orthogonal polynomials of type II in Equation \eqref{Meixner2KindII} are
	\begin{align*}
 M_{(n_1,n_2)}^{(a)}
 &=\begin{multlined}[t][.9\textwidth]
  \dfrac{(1-c)^{\beta_a+n_1+n_2+n_a-2}}{c^{n_1+n_2-1}}\dfrac{(-1)^{n_a-1}(n_1+n_2-2)!}{(n_1-1)!(n_2-1)!}\dfrac{1}{(\beta_a-\hat{\beta}_a-\hat{n}_a+1)_{n_1+n_2-1}}\\
 \times\KF{1:2;1}{1:1;0}{(-n_a+1):(-x,\,\beta_a-\hat{\beta}_a-\hat{n}_a+1);(\hat{\beta}_a-\beta_a-n_a+1)}{(-n_1-n_2+2):(\beta_a);--}{1,\dfrac{c}{c-1}},
 \end{multlined}\\
 &=\begin{multlined}[t][.9\textwidth]
  \dfrac{(1-c)^{\beta_a+n_1+n_2+n_a-2}}{c^{n_1+n_2-1}}\dfrac{(-1)^{n_a-1}(n_1+n_2-2)!}{(n_1-1)!(n_2-1)!}\dfrac{1}{(\beta_a-\hat{\beta}_a-\hat{n}_a+1)_{n_1+n_2-1}}\\
 \times\sum^{n_a-1}_{l=0}\dfrac{1}{l!}\dfrac{(\beta_a-\hat{\beta}_a-\hat{n}_a+1)_{l}}{(\beta_a)_l}\sum^{n_a-1-l}_{m=0}\dfrac{(-n_a+1)_{l+m}}{m!}\dfrac{(\hat{\beta}_a-\beta_a-n_a+1)_{m}}{(-n_1-n_2+2)_{l+m}}\left(\dfrac{c}{c-1}\right)^m(-x)_l.
 \end{multlined}
\end{align*}
Where we have defined $\hat{\beta}_a\coloneq \beta_1\delta_{a,2}+\beta_2\delta_{a,1}$. 
\end{pro}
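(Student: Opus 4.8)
The plan is to evaluate the limit \eqref{Meixner2KindcomolimiteHahnI} directly, starting from the explicit double-sum form of the Hahn type I polynomials in \eqref{HahnTipoI}--\eqref{coefHahnI}. The identification of \eqref{Meixner2KindcomolimiteHahnI} as genuine Meixner II type I multiple orthogonal polynomials, biorthogonal to those of type II in \eqref{Meixner2KindII}, has already been secured in the discussion preceding the statement: the limits \eqref{eq:recursionMII} of the Hahn near neighbor recursion coefficients coincide with the Meixner II coefficients of \cite{Arvesu}, the limits of the initial forms $\mathscr Q_{(1,0)}$ and $\mathscr Q_{(1,1)}$ match, and Proposition \ref{pro:recurrence_unicity_typeI} then forces the whole family to be the Meixner II one. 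Hence the only content left to establish is the closed hypergeometric expression, which is a matter of computing the limit term by term.

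First I would substitute into \eqref{coefHahnI} the parameter identifications dictated by the Meixner II limit, namely Hahn's $\alpha_a\mapsto\beta_a-1$ (so that $\hat\alpha_a\mapsto\hat\beta_a-1$) and Hahn's $\beta\mapsto\frac{1-c}{c}N$, while keeping $N$. Because the $l$-sum runs over $\{0,\dots,n_a-1\}$ and the inner $m$-sum over $\{0,\dots,n_a-1-l\}$, both are finite and independent of $N$, so the limit may be passed inside the double sum and I only need the $N\to\infty$ asymptotics of each individual Pochhammer factor. The decisive cancellation is structural: under the substitution the factor $(\alpha_a+\beta+n_1+n_2+n_a)_{N+1-n_1-n_2}$ in the denominator of $C^{(a),l}_{(n_1,n_2)}$ becomes exactly $(\beta_a+\frac{1-c}{c}N+n_1+n_2+n_a-1)_{N+1-n_1-n_2}$, which is precisely the factor placed in the numerator of the normalization in \eqref{Meixner2KindcomolimiteHahnI}; likewise the two factorials $(N+1-n_1-n_2)!$ cancel. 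This is the sole purpose of that elaborate prefactor, and the condition $\beta_1-\beta_2\notin\mathbb Z$ guarantees that the surviving Pochhammer $(\beta_a-\hat\beta_a-\hat n_a+1)_{n_1+n_2-1}$ in the denominator is nonzero, so the limit is well defined.

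Once these $N$-growing pieces are removed, I would track the leading behaviour of the remaining $N$-dependent Pochhammers using $(\lambda N+a)_k\sim(\lambda N)^k$ with $\lambda=\frac{1-c}{c}$ together with $(-N)_{l+m}/(-N)_l=(-N+l)_m\sim(-N)^m$. A direct power count shows that numerator and denominator each carry $N^{\,n_1+n_2-1+l+m}$, so every term has a finite limit; collecting the constants, the powers of $\frac{1-c}{c}$ cancel except for a leftover $(\frac{1-c}{c})^{-m}$, which combined with the $(-1)^m$ from $(-N)^m$ produces exactly the factor $(\frac{c}{c-1})^m$ appearing in the statement. The $N$-free factors $(\alpha_a-\hat\alpha_a-\hat n_a+1)_{n_1+n_2-1}=(\beta_a-\hat\beta_a-\hat n_a+1)_{n_1+n_2-1}$, $(\alpha_a-\hat\alpha_a-\hat n_a+1)_l$, $(\alpha_a+1)_l=(\beta_a)_l$, $(\hat\alpha_a-\alpha_a-n_a+1)_m=(\hat\beta_a-\beta_a-n_a+1)_m$, together with $(-n_a+1)_{l+m}$ and $(-n_1-n_2+2)_{l+m}$, pass to the limit unchanged and reassemble, after multiplying by the constant normalization $(1-c)^{\beta_a+n_1+n_2+n_a-2}/c^{\,n_1+n_2-1}$ and the combinatorial prefactor $(-1)^{n_a-1}(n_1+n_2-2)!/((n_1-1)!(n_2-1)!)$, into the double sum of the statement. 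Recognizing that sum as a Kampé de Fériet series in the sense of \eqref{KF}, with $-x$ placed as an $l$-indexed numerator parameter and first argument set to $1$ and second argument $\frac{c}{c-1}$, yields the compact form.

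The main obstacle is purely computational bookkeeping: correctly pairing every $N$-dependent Pochhammer in \eqref{coefHahnI} with its asymptotic order and verifying that the divergent factors cancel exactly against the normalization so that a finite, nonzero limit survives. No single step is conceptually deep, but the counts of powers of $N$ and of $\frac{1-c}{c}$ must be exact, and the careful placement of $-x$ and the argument $\frac{c}{c-1}$ in the final Kampé de Fériet encoding must be checked against the surviving double sum. Finally, the case $a=2$ requires no separate work: it follows from the $a=1$ computation by the weight interchange $\omega_1\leftrightarrow\omega_2$, that is, the simultaneous swap $1\leftrightarrow2$ of all indices, under which $\hat\beta_a$ and $\hat n_a$ are defined to be symmetric while $c$ is untouched.
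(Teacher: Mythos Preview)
Your proposal is correct and follows exactly the paper's approach: the paper establishes biorthogonality via Proposition~\ref{pro:recurrence_unicity_typeI} in the text preceding the statement and then simply asserts that ``the limit \eqref{Meixner2KindcomolimiteHahnI} gives'' the displayed formula, while you have supplied the term-by-term asymptotic computation that the paper leaves implicit. Your bookkeeping of the $N$-dependent Pochhammer cancellations and the emergence of the $(c/(c-1))^m$ factor is accurate.
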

\begin{rem}
	These explicit expressions are to our best knowledge new.
\end{rem}


\section{Kravchuk multiple orthogonal polynomials of type I}

The type II Kravchuk multiple orthogonal  polynomials are derived  from type II multiple Hahn polynomials from the limit, cf. \cite{AskeyII}, 
\begin{align*}
	K_{(n_1,n_2)}(x,p_1,p_2,N)=\lim_{t\rightarrow\infty}Q_{(n_1,n_2)}\left(x,\dfrac{p_1}{1-p_1}t,\dfrac{p_2}{1-p_2}t,t,N\right),
\end{align*}
and they read as follows
\begin{align}
	\label{KravchukII}
\left\{\begin{aligned}
		K_{(n_1,n_2)}
	&=p_1^{n_1}p_2^{n_2}(-N)_{n_1+n_2}
	\KF{1:1;1}{1:0;0}{(-x):(-n_2);(-n_1)}{(-N):--;--}{\dfrac{1}{p_2},\dfrac{1}{p_1}}
	\\ 
	&=p_1^{n_1}p_2^{n_2}(-N)_{n_1+n_2}\sum_{l=0}^{n_1+n_2}\sum_{m=0}^l\dfrac{(-n_1)_m(-n_2)_{l-m}}{m!(l-m)!}\dfrac{1}{p_1^m}\dfrac{1}{p_2^{l-m}}\dfrac{(-x)_l}{(-N)_l}.
\end{aligned}\right.
\end{align}
The  limit in the Hahn near neighbor recursion coefficients  
\begin{align*}
	b_{\operatorname{K}}^{(a)}(n_1,n_2,p_1,p_2,N)&=\lim_{t\rightarrow\infty} b^{(a)}\left(n_1,n_2,\dfrac{p_1}{1-p_1}t,\dfrac{p_2}{1-p_2}t,t,N\right),\\
	c_{\operatorname{K}}^(n_1,n_2,p_1,p_2,N)&=\lim_{t\rightarrow\infty} c\left(n_1,n_2,\dfrac{p_1}{1-p_1}t,\dfrac{p_2}{1-p_2}t,t,N\right),\\
	d_{\operatorname{K}}^{(a)}(n_1,n_2,p_1,p_2,N)&=\lim_{t\rightarrow\infty} d^{(a)}\left(n_1,n_2,\dfrac{p_1}{1-p_1}t,\dfrac{p_2}{1-p_2}t,t,N\right).
\end{align*}
leads to
\begin{align}\label{eq:recursionK}
\left\{\begin{aligned}
		b_{\operatorname{K}}^{(1)}&=n_1+n_2+(N-n_1-n_2-n_1)p_1-n_2p_2,\\
	b_{\operatorname{K}}^{(2)}&=n_1+n_2-n_1p_1+(N-n_1-n_2-n_2)p_2,\\
	c_{\operatorname{K}}&=(N-n_1-n_2+1)(n_1p_1(1-p_1)+n_2p_2(1-p_2)),\\
	d_{\operatorname{K}}^{(1)}&=(N-n_1-n_2+1)(N-n_1-n_2+2)n_1p_1(1-p_1)(p_1-p_2),\\
	d_{\operatorname{K}}^{(2)}&=(N-n_1-n_2+1)(N-n_1-n_2+2)n_2p_2(1-p_2)(p_2-p_1).
\end{aligned}\right.
\end{align}

For $a\in\{1,2\}$, we discuss for the Hahn polynomials \eqref{HahnTipoI} the corresponding limit process  
\begin{multline}
	\label{KravchukcomolimiteHahnI}
	K_{(n_1,n_2)}^{(a)}(x,p_1,p_2,N)=
		\dfrac{1}{N!}\left(\dfrac{1}{1-p_a}\right)^{n_1+n_2-1}\\
		\times \lim_{t\rightarrow\infty}(t+1)_{n_1+n_2-1}\left(\dfrac{1}{1-p_a}t+n_1+n_2+n_a\right)_{N+1-n_1-n_2}
		Q_{(n_1,n_2)}^{(a)}\left(x,\dfrac{p_1}{1-p_1}t,\dfrac{p_2}{1-p_2}t,t,N\right).
\end{multline}
Coefficients in \eqref{eq:recursionK} coincide with the coefficients given in \cite{Arvesu} and the  limit of  the Hahn linear forms  $\mathscr Q_{(1,0)}$ and $\mathscr Q_{(1,1)}$ also coincide with those linear forms of the Kravchuk case. Hence, recalling Proposition \ref{pro:recurrence_unicity_typeI}, these polynomials satisfy discrete orthogonality relations of the form
\begin{align*}
 \sum^N_{k=0}(-N+k)_j\left(K_{(n_1,n_2)}^{(1)}(k,p_1,p_2,N)\omega^{\operatorname{K}}_1(k,p_1,N)
 +K_{(n_1,n_2)}^{(2)}(k,p_1,p_2,N)\omega^{\operatorname{K}}_2(k,p_2,N)\right)=0
\end{align*}
for $j\in\{0,\dots,n_1+n_2-2\}$, with respect to the weight functions
\begin{align*}
 \omega^{\operatorname{K}}_a(x,p_a,N)&=\dfrac{N!p_a^x(1-p_a)^{N-x}}{\Gamma(x+1)\Gamma(N-x+1)}, & a&\in\{1,2\}.
\end{align*}
In this case we have that
$N\in \mathbb N_0$; $0<p_1,p_2<1$ and $p_1\neq p_2$. 
So we have that (\ref{KravchukcomolimiteHahnI}) are  the multiple Kravchuk polynomials of type I. 
For $a\in\{1,2\}$, when we compute the limit \eqref{KravchukcomolimiteHahnI} we get:
\begin{pro}[Hypergeometric expressions for type I multiple Kravchuk]
				For $a\in\{1,2\}$, the  Kravchuk multiple orthogonal polynomials of type I biorthogonal to the Kravchuk multiple orthogonal polynomials of type II in Equation \eqref{KravchukII} are
	\begin{align*}
  K_{(n_1,n_2)}^{(a)}
  &=\begin{multlined}[t][.75\textwidth]
  \dfrac{(-1)^{n_a-1}(n_1+n_2-2)!}{(n_1-1)!(n_2-1)!(N-n_1-n_2+2)_{n_1+n_2-1}(p_a-\hat{p}_a)^{n_1+n_2-1}}\\\times\KF{2:1;0}{1:1;0}{(-n_a+1,-N): (-x); --}{(-n_1-n_2+2): (-N); --}{\dfrac{p_a-\hat{p}_a}{p_a(1-{p}_a)},\dfrac{\hat{p}_a-p_a}{1-{p}_a}}
 \end{multlined}\\
 &=\begin{multlined}[t][.75\textwidth]\dfrac{(-1)^{n_a-1}(n_1+n_2-2)!}{(n_1-1)!(n_2-1)!(N-n_1-n_2+2)_{n_1+n_2-1}(p_a-\hat{p}_a)^{n_1+n_2-1}}\\
 \times\sum_{l=0}^{n_a-1}\sum_{m=0}^{n_a-1-l}\dfrac{(-n_a+1)_{l+m}(-N)_{l+m}}{l!m!(-n_1-n_2+2)_{l+m}(-N)_l}\left(\dfrac{p_a-\hat{p}_a}{p_a(1-{p}_a)}\right)^l\left(\dfrac{\hat{p}_a-p_a}{1-{p}_a}\right)^m(-x)_l.
  \end{multlined}
\end{align*}
Where we have defined $\hat{p}_a\coloneq p_1\delta_{a,2}+p_2\delta_{a,1}$. 
\end{pro}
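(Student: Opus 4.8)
The plan is to read off both expressions as the $t\to\infty$ limit \eqref{KravchukcomolimiteHahnI} of the Hahn type I polynomials \eqref{HahnTipoI}. The orthogonality is already in hand: the limiting recursion coefficients \eqref{eq:recursionK} coincide with the Kravchuk ones of \cite{Arvesu} and the limits of the base forms $\mathscr Q_{(1,0)},\mathscr Q_{(1,1)}$ agree, so Proposition \ref{pro:recurrence_unicity_typeI} already identifies \eqref{KravchukcomolimiteHahnI} as the type I Kravchuk forms. Hence the entire task is the explicit evaluation of the limit, and since every sum in \eqref{HahnTipoI}--\eqref{coefHahnI} is finite the limit may be taken termwise with no analytic subtlety.

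First I would insert $\alpha_a=\tfrac{p_a}{1-p_a}t$, $\hat\alpha_a=\tfrac{\hat p_a}{1-\hat p_a}t$ and $\beta=t$, keeping $N$ fixed, into the double-sum form $Q^{(a)}_{(n_1,n_2)}=\sum_{l}C^{(a),l}_{(n_1,n_2)}(-x)_l$ and record the dominant behaviours as $t\to\infty$: $\alpha_a+\beta\sim\tfrac{t}{1-p_a}$, $\hat\alpha_a+\beta\sim\tfrac{t}{1-\hat p_a}$, $\alpha_a+1\sim\tfrac{p_a t}{1-p_a}$ and $\alpha_a-\hat\alpha_a\sim\tfrac{(p_a-\hat p_a)t}{(1-p_a)(1-\hat p_a)}$. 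Every Pochhammer symbol whose argument grows linearly in $t$ is replaced by its leading power of $t$, while the symbols with argument $-N$, $-n_a+1$ or $-n_1-n_2+2$ remain finite. The external prefactor of \eqref{KravchukcomolimiteHahnI} behaves like $\tfrac{t^{N}}{N!\,(1-p_a)^{N}}$, via $(t+1)_{n_1+n_2-1}\sim t^{n_1+n_2-1}$ and $\bigl(\tfrac{t}{1-p_a}\bigr)^{N+1-n_1-n_2}$.

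Next I would combine this external factor with the prefactor of \eqref{coefHahnI}. The ratio $\tfrac{(\hat\alpha_a+\beta+\hat n_a+1)_{n_1+n_2-1}}{(\alpha_a-\hat\alpha_a-\hat n_a+1)_{n_1+n_2-1}}$ tends to $\bigl(\tfrac{1-p_a}{p_a-\hat p_a}\bigr)^{n_1+n_2-1}$ and all powers of $t$ cancel; together with the identity $\tfrac{(N+1-n_1-n_2)!}{N!}=\tfrac{1}{(N-n_1-n_2+2)_{n_1+n_2-1}}$ this produces exactly the announced prefactor $\tfrac{(-1)^{n_a-1}(n_1+n_2-2)!}{(n_1-1)!(n_2-1)!(N-n_1-n_2+2)_{n_1+n_2-1}(p_a-\hat p_a)^{n_1+n_2-1}}$. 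For the summand, the block $\tfrac{(\alpha_a+\beta+n_1+n_2)_l(\alpha_a-\hat\alpha_a-\hat n_a+1)_l}{l!(\alpha_a+1)_l(-N)_l}$ contributes $t^{l}$ while the inner $m$-sum, through $\tfrac{(\hat\alpha_a-\alpha_a-n_a+1)_m}{(\hat\alpha_a+\beta+\hat n_a+1)_{l+m}}$, contributes $t^{-l}$; these cancel and the surviving constants collapse to $\tfrac{(-n_a+1)_{l+m}(-N)_{l+m}}{l!\,m!\,(-N)_l(-n_1-n_2+2)_{l+m}}\bigl(\tfrac{p_a-\hat p_a}{p_a(1-p_a)}\bigr)^l\bigl(\tfrac{\hat p_a-p_a}{1-p_a}\bigr)^m$, which is precisely the claimed double sum. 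Recognizing this sum as the Kampé de Fériet series with the indicated numerator and denominator parameters then yields the first, closed, expression.

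The sole real difficulty is the asymptotic bookkeeping: one must track the powers of $t$ through all the Pochhammer symbols in \eqref{coefHahnI} and the two prefactors, and verify that they cancel exactly, leaving finite constants that reassemble into the Kravchuk coefficients. Because the limit commutes with the finite summations there is no convergence issue, so this careful but elementary computation is essentially the whole of the argument.
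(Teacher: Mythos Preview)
Your proposal is correct and follows exactly the route the paper takes: identify the limit \eqref{KravchukcomolimiteHahnI} as the type I Kravchuk forms via Proposition \ref{pro:recurrence_unicity_typeI}, then compute that limit termwise on the finite double sum \eqref{coefHahnI}. The paper merely asserts ``when we compute the limit \eqref{KravchukcomolimiteHahnI} we get'' without detail, and your asymptotic bookkeeping of the Pochhammer symbols (including the cancellation of the $t$-powers and the identification of the constants $\tfrac{p_a-\hat p_a}{p_a(1-p_a)}$ and $\tfrac{\hat p_a-p_a}{1-p_a}$) fills that gap correctly.
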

\begin{rem}
	These expressions where not presented before in the literature.
\end{rem}

%

\section{Laguerre I multiple orthogonal polynomials of type I}

Laguerre I multiple orthogonal polynomials of type II appear from the following two limits, cf. \cite{AskeyII}, 
\begin{align*}
	L_{(n_1,n_2)}(x,\alpha_1,\alpha_2)&=\lim_{\beta\rightarrow\infty}(\alpha_1+\beta+n_1+n_2)_{n_1}(\alpha_2+\beta+n_1+n_2)_{n_2}P_{(n_1,n_2)}\left(\dfrac{x}{\beta};\alpha_1,\alpha_2,\beta\right),\\
	&=\lim_{c\rightarrow1}\left({1-c}\right)^{n_1+n_2}M_{(n_1,n_2)}\left(\dfrac{x}{1-c};\alpha_1+1,\alpha_2+1,c\right),
\end{align*}
and in both limits  the following expressions are found
\begin{align}
	\label{Laguerre1KindII}
\left\{\begin{aligned}
		L_{(n_1,n_2)}
	&=(-1)^{n_1+n_2}(\alpha_1+1)_{n_1}(\alpha_2+1)_{n_2}\KF{0:2;1}{1:1;0}{--:(-n_2,\alpha_1+n_1+1);(-n_1)}{(\alpha_1+1):(\alpha_2+1);--}{x,x}
	\\
	&=(-1)^{n_1+n_2}(\alpha_1+1)_{n_1}(\alpha_2+1)_{n_2}\sum^{n_1+n_2}_{l=0}\sum^{l}_{m=0}\dfrac{(-n_1)_m(-n_2)_{l-m}(\alpha_1+1+n_1)_{l-m}}{(\alpha_1+1)_{l}(\alpha_2+1)_{l-m} m!(l-m)!}x^l.
\end{aligned}\right.
\end{align}
These limits lead to the following limits from the coefficients in the near neighbor recursion relation of Jacobi--Piñeiro type
\begin{align*}
	b_{\operatorname{LI}}^{(a)}(n_1,n_2,\alpha_1,\alpha_2)&=\lim_{\beta\rightarrow\infty}\beta b_{\operatorname{JP}}^{(a)}(n_1,n_2,\alpha_1,\alpha_2,\beta)=\lim_{c\rightarrow1}(1-c)b_{\operatorname{MII}}^{(a)}(n_1,n_2,\alpha_1+1,\alpha_2+1,c),\\
	c_{\operatorname{LI}}(n_1,n_2,\alpha_1,\alpha_2)&=\lim_{\beta\rightarrow\infty}\beta^2c_{\operatorname{JP}}(n_1,n_2,\alpha_1,\alpha_2,\beta)=\lim_{c\rightarrow1}{(1-c)^2}c_{\operatorname{MII}}(n_1,n_2,\alpha_1+1,\alpha_2+1,c),\\
	d_{\operatorname{LI}}^{(a)}(n_1,n_2,\alpha_1,\alpha_2)&=\lim_{\beta\rightarrow\infty}\beta^3d_{\operatorname{JP}}^{(a)}(n_1,n_2,\alpha_1,\alpha_2,\beta)=\lim_{c\rightarrow1}{(1-c)^3}d_{\operatorname{MII}}^{(a)}(n_1,n_2,\alpha_1+1,\alpha_2+1,c),
\end{align*}
that is 
\begin{align}\label{eq:recursionLI}
\left\{\begin{gathered}
		\begin{aligned}
		b_{\operatorname{LI}}^{(1)}&=n_1+n_2+n_1+\alpha_1+1,
		&b_{\operatorname{LI}}^{(2)}&=n_1+n_2+n_2+\alpha_2+1,
	\end{aligned}\\
	c_{\operatorname{LI}}=n_1n_2+n_1(n_1+\alpha_1)+n_2(n_2+\alpha_2), \\
	\begin{aligned}
		d_{\operatorname{LI}}^{(1)}&=n_1(n_1+\alpha_1)(n_1+\alpha_1-\alpha_2), &
		d_{\operatorname{LI}}^{(2)}&=n_2(n_2+\alpha_2)(n_2+\alpha_2-\alpha_1).
	\end{aligned}
\end{gathered}\right.
\end{align}

The Laguerre I type I polynomials can be obtained applying the same procedure as before to the Jacobi--Piñeiro polynomials or the Meixner II polynomials  doing the respective limit processes:
\begin{align}
\label{Laguerre1KindcomolimiteJPI}
 L_{(n_1,n_2)}^{(a)}(x;\alpha_1,\alpha_2)&=\lim_{\beta\rightarrow\infty}\dfrac{1}{(\alpha_1+\beta+n_1+n_2)_{n_1}(\alpha_2+\beta+n_1+n_2)_{n_2}}\dfrac{\Gamma(\beta+n_1+n_2)}{\Gamma(\alpha_a+\beta+n_1+n_2)} P_{(n_1,n_2)}^{(a)}\left(\frac{x}{\beta};\alpha_1,\alpha_2,\beta\right)\\
\label{Laguerre1KindcomolimiteMeixner2KindI}
 &=\lim_{c\rightarrow1}\dfrac{1}{(1-c)^{\alpha_a+n_1+n_2}\Gamma(\alpha_a+1)}M_{(n_1,n_2)}^{(a)}\left(\frac{x}{1-c};\alpha_1+1,\alpha_2+1,c\right).
\end{align}

As coefficients \eqref{eq:recursionLI}  are the near neighbors recursion coefficients for the Laguerre I multiple orthogonality, cf. \cite{Aptekarev}, and the  limit of  the Jacobi--Piñeiro or Meixner II linear forms  $\mathscr Q_{(1,0)}$ and $\mathscr Q_{(1,1)}$ coincide with corresponding  linear forms of the Laguerre I case, from Proposition \ref{pro:recurrence_unicity_typeI} it follows that, for $j\in\{0,\dots,n_1+n_2-2\}$, these polynomials satisfy continuous orthogonality relations of the form
\begin{align*}
 \int^{\infty}_{0}x^j\left(L_{(n_1,n_2)}^{(1)}(x,\alpha_1,\alpha_2)\omega^{\operatorname{LI}}_1(x,\alpha_1)
 +L_{(n_1,n_2)}^{(2)}(x,\alpha_1,\alpha_2)\omega^{\operatorname{LI}}_2(x,\alpha_2)\right)\d\mu^{\operatorname{LI}}(x)=0
\end{align*}
with respect to the weight functions and measure
\begin{align*}
 \omega^{\operatorname{LI}}_a(x,\alpha_a)&=\Exp{-x}x^{\alpha_a}, & a&\in\{1,2\}, & \d\mu^{\operatorname{LI}}(x)&=\d x.
\end{align*}
In this case we still have $\alpha_1,\alpha_2>-1$ and $\alpha_1-\alpha_2\not\in\mathbb Z$.
For $a\in\{1,2\}$, applying any of the two limits \eqref{Laguerre1KindcomolimiteJPI} or \eqref{Laguerre1KindcomolimiteMeixner2KindI} we find for the Laguerre I multiple orthogonal polynomials of type I the following expressions in terms of generalized hypergeometric series:
\begin{pro}[Hypergeometric expressions for type I multiple Laguerre I]
		For $a\in\{1,2\}$, the Laguerre I multiple orthogonal polynomials of type I biorthogonal to the Laguerre I multiple orthogonal polynomials of type II in Equation \eqref{Laguerre1KindII} are
	\begin{align*}
 L_{(n_1,n_2)}^{(a)}
 &=(-1)^{n_1+n_2-1}\dfrac{1}{(n_a-1)!\Gamma(\alpha_a+1)(\hat{\alpha}_a-\alpha_a)_{\hat{n}_a}}
 \pFq{2}{2}{-n_a+1,\alpha_a-\hat{\alpha}_a-\hat{n}_a+1}{\alpha_a+1,\alpha_a-\hat{\alpha}_a+1}{x}\\
 &=(-1)^{n_1+n_2-1}\dfrac{1}{(n_a-1)!\Gamma(\alpha_a+1)(\hat{\alpha}_a-\alpha_a)_{\hat{n}_a}}\sum^{n_a-1}_{l=0}\dfrac{(-n_a+1)_l}{l!}\dfrac{(\alpha_a-\hat{\alpha}_a-\hat{n}_a+1)_l}{(\alpha_a+1)_l(\alpha_a-\hat{\alpha}_a+1)_l}x^l.
\end{align*}
\end{pro}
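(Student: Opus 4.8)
The plan is to derive the explicit expression by feeding the closed form of the Jacobi--Piñeiro type I polynomials, established in the previous proposition, into the limit \eqref{Laguerre1KindcomolimiteJPI}; the Meixner II limit \eqref{Laguerre1KindcomolimiteMeixner2KindI} then serves as an independent check. Since the text preceding the statement already invokes Proposition \ref{pro:recurrence_unicity_typeI} to guarantee that the limit polynomials really are the Laguerre I type I multiple orthogonal polynomials, the only task left is to evaluate the limit and recognise the resulting series.

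First I would substitute $x\mapsto x/\beta$ in the Jacobi--Piñeiro expression and multiply by the normalising factor of \eqref{Laguerre1KindcomolimiteJPI}. The key observation is the exact cancellation of the $\beta$-dependent prefactors: the product $(\alpha_1+\beta+n_1+n_2)_{n_1}(\alpha_2+\beta+n_1+n_2)_{n_2}$ is removed by its reciprocal in \eqref{Laguerre1KindcomolimiteJPI}, while the ratio $\Gamma(\alpha_a+\beta+n_1+n_2)/\Gamma(\beta+n_1+n_2)$ cancels the normalising quotient $\Gamma(\beta+n_1+n_2)/\Gamma(\alpha_a+\beta+n_1+n_2)$. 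What survives is the $\beta$-independent constant
\begin{align*}
  (-1)^{n_1+n_2-1}\dfrac{1}{(n_a-1)!\,\Gamma(\alpha_a+1)(\hat{\alpha}_a-\alpha_a)_{\hat{n}_a}}
\end{align*}
multiplying the limit of the terminating series
\begin{align*}
  \pFq{3}{2}{-n_a+1,\alpha_a+\beta+n_1+n_2,\alpha_a-\hat{\alpha}_a-\hat{n}_a+1}{\alpha_a+1,\alpha_a-\hat{\alpha}_a+1}{\tfrac{x}{\beta}}.
\end{align*}

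Next I would pass the limit through the summation. Because the upper parameter $-n_a+1$ is a nonpositive integer, the series truncates to a finite sum over $l\in\{0,\dots,n_a-1\}$ by \eqref{enteroneg}, so exchanging limit and sum is legitimate. The only $\beta$-dependent quantities in the $l$-th summand are $(\alpha_a+\beta+n_1+n_2)_l$ and $(x/\beta)^l$, and since $(\alpha_a+\beta+n_1+n_2)_l=\beta^l\big(1+O(\beta^{-1})\big)$ one finds $\lim_{\beta\to\infty}(\alpha_a+\beta+n_1+n_2)_l\,(x/\beta)^l=x^l$. Hence the ${}_3F_2$ collapses termwise to $\pFq{2}{2}{-n_a+1,\alpha_a-\hat{\alpha}_a-\hat{n}_a+1}{\alpha_a+1,\alpha_a-\hat{\alpha}_a+1}{x}$, which is exactly the asserted formula; expanding the Pochhammer symbols reproduces its explicit series form.

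I expect no genuine obstacle: the computation is a routine termwise limit, the finiteness of the sum dispensing with any convergence concern. The only delicate point is the clean bookkeeping of the gamma and Pochhammer prefactors, and I would secure this by carrying out the parallel $c\to1$ computation from the Meixner II expression through \eqref{Laguerre1KindcomolimiteMeixner2KindI}, checking that the factors $(1-c)^{\alpha_a+n_1+n_2}$ and $1/\Gamma(\alpha_a+1)$ combine with the Meixner II asymptotics to give the same constant and the same ${}_2F_2$.
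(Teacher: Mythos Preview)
Your proposal is correct and follows exactly the route the paper takes: it computes the limit \eqref{Laguerre1KindcomolimiteJPI} of the Jacobi--Piñeiro type I expression, observing the exact cancellation of the $\beta$-dependent prefactors and the termwise degeneration of the terminating ${}_3F_2$ into a ${}_2F_2$ via $(\alpha_a+\beta+n_1+n_2)_l(x/\beta)^l\to x^l$. The paper's own argument is simply the sentence ``applying any of the two limits \eqref{Laguerre1KindcomolimiteJPI} or \eqref{Laguerre1KindcomolimiteMeixner2KindI} we find\ldots'', and you have merely spelled out the computation in full.
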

\begin{rem}
To the best of our knowledge these hypergeometric formulas are completely new. 
\end{rem}

%

\section{Laguerre II multiple orthogonal polynomials of type I}

Laguerre II multiple orthogonal polynomials of type II appear from multiple Meixner II polynomials by the limit , cf. \cite{AskeyII}, 
\begin{align*}
	L_{(n_1,n_2)}(x;\alpha_0,c_1,c_2)=\lim_{t\rightarrow\infty}\dfrac{1}{t^{n_1+n_2}}M_{(n_1,n_2)}\left({tx};\alpha_0+1,\dfrac{t}{t+c_1},\dfrac{t}{t+c_2}\right)
\end{align*}
and read
\begin{align}
	\label{LaguerreII}
\left\{\begin{aligned}
		L_{(n_1,n_2)}&=(-1)^{n_1+n_2}\dfrac{(\alpha_0+1)_{n_1+n_2}}{c_1^{n_1}c_2^{n_2}}\KF{0:1;1}{1:0;0}{--:(-n_1);(-n_2)}{(\alpha_0+1):--;--}{c_1x,c_2x}\\
	&=(-1)^{n_1+n_2}\dfrac{(\alpha_0+1)_{n_1+n_2}}{c_1^{n_1}c_2^{n_2}}\sum_{l=0}^{n_1+n_2}\sum_{m=0}^l\dfrac{(-n_1)_m}{m!}\dfrac{(-n_2)_{l-m}}{(l-m)!}\dfrac{c_1^mc_2^{l-m}}{(\alpha_0+1)_l}x^l.
\end{aligned}\right.
\end{align}
The Meixner I near neighbor recursion coefficients limit is
\begin{align*}
	b_{\operatorname{LII}}^{(a)}(n_1,n_2,\alpha_0,c_1,c_2)&=\lim_{t\rightarrow\infty}\dfrac{1}{t}b_{\operatorname{MI}}^{(a)}\left(n_1,n_2,\alpha_0+1,\dfrac{t}{t+c_1},\dfrac{t}{t+c_2}\right),\\
	c_{\operatorname{LII}}(n_1,n_2,\alpha_0,c_1,c_2)&=\lim_{t\rightarrow\infty}\dfrac{1}{t^2}c_{\operatorname{MI}}\left(n_1,n_2,\alpha_0+1,\dfrac{t}{t+c_1},\dfrac{t}{t+c_2}\right),\\
	d_{\operatorname{LII}}^{(a)}(n_1,n_2,\alpha_0,c_1,c_2)&=\lim_{t\rightarrow\infty}\dfrac{1}{t^3}d_{\operatorname{MI}}^{(a)}\left(n_1,n_2,\alpha_0+1,\dfrac{t}{t+c_1},\dfrac{t}{t+c_2}\right),
\end{align*}
with
\begin{align}\label{eq:recursionLII}
\left\{\begin{gathered}
	\begin{aligned}
		b_{\operatorname{LII}}^{(1)}&=\dfrac{n_2c_1+2n_1c_2+n_2c_2+\alpha_0c_2+c_2}{c_1c_2},&
	b_{\operatorname{LII}}^{(2)}&=\dfrac{n_1c_2+2n_2c_1+n_1c_1+\alpha_0 c_1+c_1}{c_1c_2},
\end{aligned}\\
	c_{\operatorname{LII}}=\dfrac{(n_1c_2^2+n_2c_1^2)(n_1+n_2+\alpha_0)}{c^2_1c^2_2},\\ 
\begin{aligned}
		d_{\operatorname{LII}}^{(1)}&=\dfrac{n_1(c_2-c_1)(n_1+n_2+\alpha_0-1)(n_1+n_2+\alpha_0)}{c^3_1c_2},&
	d_{\operatorname{LII}}^{(2)}&=\dfrac{n_2(c_1-c_2)(n_1+n_2+\alpha_0-1)(n_1+n_2+\alpha_0)}{c_1c^3_2}.
\end{aligned}
\end{gathered}\right.
\end{align}

The Laguerre II polynomials of type I can be obtained from the Meixner I polynomials 
through the limit
\begin{align}
	\label{Laguerre2KindcomolimiteMeixner1KindI}
	L_{(n_1,n_2)}^{(a)}(x;\alpha_0,c_1,c_2)&=\lim_{t\rightarrow\infty}\dfrac{t^{n_1+n_2-1}(c_i+t)^{\alpha_0+1}}{\Gamma(\alpha_0+1)}M_{(n_1,n_2)}^{(a)}\left(tx,\alpha_0+1,\dfrac{t}{t+c_1},\dfrac{t}{t+c_2}\right).
\end{align}
Coefficients \eqref{eq:recursionLII} coincide with those of the multiple Laguerre II type,  cf. \cite{Aptekarev}, and the  limit of  the Jacobi--Piñeiro or Meixner I linear forms  $\mathscr Q_{(1,0)}$ and $\mathscr Q_{(1,1)}$ coincide with corresponding  linear forms of the Laguerre II case, so that  Proposition \ref{pro:recurrence_unicity_typeI} leads to  conclude that these polynomials satisfy continuous orthogonality relations of the form
\begin{align*}
 \int^{\infty}_{0}x^j\left(L_{(n_1,n_2)}^{(1)}(x,\alpha_0,c_1,c_2)\omega^{\operatorname{LII}}_1(x,\alpha_0,c_1)
 +L_{(n_1,n_2)}^{(2)}(x,\alpha_0,c_1,c_2)\omega^{\operatorname{LII}}_2(x,\alpha_0,c_2)\right)\d\mu^{\operatorname{LII}}(x)=0
\end{align*}
for $j\in\{0,\dots,n_1+n_2-2\}$, in terms of the weight functions and measure
\begin{align*}
 \omega_a^{\operatorname{LII}}(x,\alpha_0,c_a)&=\Exp{-c_ax}x^{\alpha_0}, & a&\in\{1,2\}, & \d\mu^{\operatorname{LII}}(x)&=\d x,
\end{align*}
with $\alpha_0>-1$; $c_1,c_2>0$ and $c_1\neq c_2$.
For $a\in\{1,2\}$, Equation \eqref{Laguerre2KindcomolimiteMeixner1KindI} leads to the following explicit expressions of Laguerre II multiple orthogonal polynomials of type I:
\begin{pro}[Hypergeometric expressions for type I multiple Laguerre II]
		For $a\in\{1,2\}$, the  Laguerre II multiple orthogonal polynomials of type I biorthogonal to the Laguerre II multiple orthogonal polynomials of type II in Equation \eqref{LaguerreII} are
	\begin{align*}
 L_{(n_1,n_2)}^{(a)}(x,\alpha_0,c_1,c_2)
 &=\begin{multlined}[t][.8\textwidth] \dfrac{(-1)^{{n}_a-1}(n_1+n_2-2)!}{(n_1-1)!(n_2-1)!\Gamma(\alpha_0+n_1+n_2)}c_a^{\alpha_0+1}\left(\dfrac{c_a\hat{c}_a}{\hat{c}_a-c_a}\right)^{n_1+n_2-1}\\\times\KF{2:1;0}{1:1;0}{(-n_a+1,\alpha_0+1): --;--}{(-n_1-n_2+2): ( \alpha_0+1); --}{\dfrac{(c_a-\hat{c}_a)c_a}{\hat{c}_a}x,\dfrac{(\hat{c}_a-c_a)}{\hat{c}_a}}
\end{multlined}\\
 &=\begin{multlined}[t][.8\textwidth]
  \dfrac{(-1)^{{n}_a-1}(n_1+n_2-2)!}{(n_1-1)!(n_2-1)!\Gamma(\alpha_0+n_1+n_2)}c_a^{\alpha_0+1}\left(\dfrac{c_a\hat{c}_a}{\hat{c}_a-c_a}\right)^{n_1+n_2-1}\\
 \times\sum_{l=0}^{n_a-1}\sum_{m=0}^{n_a-1-l}\dfrac{(-n_a+1)_{l+m}(\alpha_0+1)_{l+m}}{l!m!(-n_1-n_2+2)_{l+m}( \alpha_0+1)_l}\left(\dfrac{(c_a-\hat{c}_a)c_a}{\hat{c}_a}\right)^l\left(\dfrac{(\hat{c}_a-c_a)}{\hat{c}_a}\right)^mx^l.
 \end{multlined}
\end{align*}
\end{pro}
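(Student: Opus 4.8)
The plan is to read off these formulas by evaluating the limit \eqref{Laguerre2KindcomolimiteMeixner1KindI} directly on the explicit double-sum expression for the Meixner I type I polynomials $M^{(a)}_{(n_1,n_2)}$ established in the previous section. Because that expression is a \emph{finite} double sum --- the summation range being truncated by the factor $(-n_a+1)_{l+m}$, which vanishes once $l+m\geq n_a$ --- the limit can be passed inside the summation termwise and no interchange-of-limit issue arises. Concretely, I would substitute $\beta=\alpha_0+1$, replace the two Meixner arguments by $\gamma_b\coloneq t/(t+c_b)$ for $b\in\{1,2\}$, and dilate $x\mapsto tx$. It is then convenient to record the elementary asymptotics, as $t\to\infty$: $1-\gamma_b\sim c_b/t$, $\gamma_b\to1$, and $\gamma_a-\hat\gamma_a\sim(\hat c_a-c_a)/t$, where $\hat\gamma_a$ denotes the companion of $\gamma_a$.

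The second step is to combine these asymptotics with the normalizing prefactor $t^{n_1+n_2-1}(c_a+t)^{\alpha_0+1}/\Gamma(\alpha_0+1)\sim t^{n_1+n_2+\alpha_0}/\Gamma(\alpha_0+1)$ and to verify that all powers of $t$ cancel. In the $(l,m)$-independent constant, the Meixner factor $(1-\gamma_a)^{n_1+n_2-1+\beta}=(1-\gamma_a)^{n_1+n_2+\alpha_0}\sim (c_a/t)^{n_1+n_2+\alpha_0}$ cancels exactly the $t^{n_1+n_2+\alpha_0}$ of the normalizer, while $\big((1-\hat\gamma_a)/(\gamma_a-\hat\gamma_a)\big)^{n_1+n_2-1}\to(\hat c_a/(\hat c_a-c_a))^{n_1+n_2-1}$ is already $O(1)$; together with $1/(\beta)_{n_1+n_2-1}=1/(\alpha_0+1)_{n_1+n_2-1}$ and the identity $\Gamma(\alpha_0+1)(\alpha_0+1)_{n_1+n_2-1}=\Gamma(\alpha_0+n_1+n_2)$ this assembles precisely the stated constant $c_a^{\alpha_0+1}\big(c_a\hat c_a/(\hat c_a-c_a)\big)^{n_1+n_2-1}/\Gamma(\alpha_0+n_1+n_2)$. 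Inside the sum, the block $\big((\gamma_a-\hat\gamma_a)(1-\gamma_a)/(\gamma_a(1-\hat\gamma_a))\big)^l\sim\big((\hat c_a-c_a)c_a/(\hat c_a t)\big)^l$ combines with $(-tx)^l$ to give the finite limit $\big((c_a-\hat c_a)c_a/\hat c_a\big)^l x^l$, while $\big((\gamma_a-\hat\gamma_a)/(1-\hat\gamma_a)\big)^m\to\big((\hat c_a-c_a)/\hat c_a\big)^m$, so each coefficient tends to its counterpart in the statement. Collecting the termwise limits reproduces the claimed double sum, which is recognised as the displayed Kampé de Fériet series of type $(2:1;0)$ over $(1:1;0)$ with the arguments indicated.

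The main obstacle will be purely the bookkeeping of these competing divergent and vanishing factors: taken singly the normalizer grows like $t^{n_1+n_2+\alpha_0}$, the prefactor $(1-\gamma_a)^{n_1+n_2+\alpha_0}$ decays at the same rate, and each dilated monomial $(-tx)^l$ grows, yet the product must converge to a fixed polynomial in $x$. The clean way to organise this is to first collect, for each fixed pair $(l,m)$, the total exponent of $t$ as a function of $(l,m)$ and check that it vanishes identically, and only afterwards evaluate the finite constants. Once the explicit form is secured, the biorthogonality to the type II family \eqref{LaguerreII} requires no separate work: as already noted above, the recursion coefficients \eqref{eq:recursionLII} and the limits of the seed forms $\mathscr Q_{(1,0)}$, $\mathscr Q_{(1,1)}$ match the Laguerre II data, so Proposition \ref{pro:recurrence_unicity_typeI} identifies \eqref{Laguerre2KindcomolimiteMeixner1KindI} with the genuine Laguerre II type I polynomials, whose biorthogonality is inherited in the limit from the Meixner I case.
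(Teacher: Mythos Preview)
Your proposal is correct and follows exactly the route the paper takes: the paper simply asserts that ``Equation \eqref{Laguerre2KindcomolimiteMeixner1KindI} leads to the following explicit expressions'' without writing out any details, and you have carried out precisely that limit termwise on the finite double sum for $M^{(a)}_{(n_1,n_2)}$, handling the competing powers of $t$ and invoking Proposition~\ref{pro:recurrence_unicity_typeI} for the biorthogonality in the same way the paper does in the paragraph preceding the proposition. Your asymptotics and the identification of constants are accurate, so this is a faithful (and more explicit) rendering of the paper's own argument.
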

\begin{rem}
	These expressions where not known before in the literature.
\end{rem}


\section{Charlier multiple orthogonal polynomials of type I}

Charlier multiple orthogonal polynomials of type II are known to appear whenever one applies one of the two following limits, cf. \cite{AskeyII}, 
\begin{align*}
	C_{(n_1,n_2)}(x,b_1,b_2)&=\lim_{\beta\rightarrow\infty}M_{(n_1,n_2)}\left(x,\beta,\dfrac{b_1}{\beta},\dfrac{b_2}{\beta}\right)
	=\lim_{N\rightarrow\infty}K_{(n_1,n_2)}\left(x,\dfrac{b_1}{N},\dfrac{b_2}{N},N\right),
\end{align*}
and one is lead to the expression
\begin{align}
	 \label{CharlierII}
\left\{\begin{aligned}
		C_{(n_1,n_2)}
	&=(-1)^{n_1+n_2}b_1^{n_1}b_2^{n_2}\KF{1:1;1}{0:0;0}{(-x):(-n_2);(-n_1)}{--:--;--}{-\dfrac{1}{b_2},-\dfrac{1}{b_1}}\\
	&=(-1)^{n_1+n_2}b_1^{n_1}b_2^{n_2}\sum_{l=0}^{n_1+n_2}\sum_{m=0}^l\dfrac{(-n_2)_{l-m}}{(l-m)!}\dfrac{(-n_1)_m}{m!}
	\left(-\dfrac{1}{b_1}\right)^m
	\left(
	-\dfrac{1}{b_2}\right)^{l-m}
	(-x)_l.
\end{aligned}\right.
\end{align}
The corresponding limit for the near neighbors recursion coefficients is 
\begin{align*}
	b_{\operatorname{C}}^{(a)}(n_1,n_2,b_1,b_2)&=\lim_{\beta\rightarrow\infty} b_{\operatorname{MI}}^{(a)}\left(n_1,n_2,\beta,\dfrac{b_1}{\beta},\dfrac{b_2}{\beta}\right)=\lim_{N\rightarrow\infty} b_{\operatorname{K}}^{(a)}\left(n_1,n_2,\dfrac{b_1}{N},\dfrac{b_2}{N},N\right),\\
	c_{\operatorname{C}}(n_1,n_2,b_1,b_2)&=\lim_{\beta\rightarrow\infty} c_{\operatorname{MI}}\left(n_1,n_2,\beta,\dfrac{b_1}{\beta},\dfrac{b_2}{\beta}\right)=\lim_{N\rightarrow\infty} c_{\operatorname{K}}\left(n_1,n_2,\dfrac{b_1}{N},\dfrac{b_2}{N},N\right),\\
	d_{\operatorname{C}}^{(a)}(n_1,n_2,b_1,b_2)&=\lim_{\beta\rightarrow\infty} d_{\operatorname{MI}}^{(a)}\left(n_1,n_2,\beta,\dfrac{b_1}{\beta},\dfrac{b_2}{\beta}\right)=\lim_{N\rightarrow\infty} d_{\operatorname{K}}^{(a)}\left(n_1,n_2,\dfrac{b_1}{N},\dfrac{b_2}{N},N\right),
\end{align*}
so that
\begin{align}\label{eq:recursionC}
		b_{\operatorname{C}}^{(1)}&=n_1+n_2+b_1, &b_{\operatorname{C}}^{(2)}&=n_1+n_2+b_2,
&
	c_{\operatorname{C}}&=n_1b_1+n_2b_2,&
		d_{\operatorname{C}}^{(1)}&=n_1b_1(b_1-b_2), &
	d_{\operatorname{C}}^{(2)}&=n_2b_2(b_2-b_1).
\end{align}

The Charlier type I polynomials can be obtained from the Meixner I polynomials 
and the Kravchuk polynomials 
through the respective corresponding limit processes
\begin{align}
	\label{CharliercomolimiteMeixner1KindI}
	C_{(n_1,n_2)}^{(a)}(x,b_1,b_2)&=\lim_{\beta\rightarrow\infty}M_{(n_1,n_2)}^{(a)}\left(x,\beta,\dfrac{b_1}{\beta},\dfrac{b_2}{\beta}\right)\\
	\label{CharliercomolimiteKravchukI}
	&=\Exp{-b_i}\lim_{N\rightarrow\infty}K_{(n_1,n_2)}^{(a)}\left(x,\dfrac{b_1}{N},\dfrac{b_2}{N},N\right).
\end{align}
Coefficients in \eqref{eq:recursionC} coincide with the near neighbor recursion coefficients for the multiple Charlier polynomials, cf. \cite{Arvesu}, and the  limit of  the Meixner I or Kravchuk linear forms  $\mathscr Q_{(1,0)}$ and $\mathscr Q_{(1,1)}$ coincide with corresponding  linear forms of the Charlier  case, then  Proposition \ref{pro:recurrence_unicity_typeI} imply that, for $j\in\{0,\dots,n_1+n_2-2\}$, these polynomials satisfy discrete orthogonality relations of the form
\begin{align*}
 \sum^\infty_{k=0}(-k)_j\left(C_{(n_1,n_2)}^{(1)}(k,b_1,b_2)\omega^{\operatorname{JP}}_1(k,b_1)
 +C_{(n_1,n_2)}^{(2)}(k,b_1,b_2)\omega^{\operatorname{C}}_2(k,b_2)\right)=0,
\end{align*}
with respect to the weight functions
\begin{align*}
 \omega^{\operatorname{C}}_a(x,b_a)&=\frac{b_a^x}{\Gamma(x+1)}, & a&\in\{1,2\},
\end{align*}
with $b_1,b_2>0$ and $b_1\neq b_2$.
For $a\in\{1,2\}$, Equations \eqref{CharliercomolimiteMeixner1KindI} or \eqref{CharliercomolimiteKravchukI} give the following explicit expressions for Charlier multiple orthogonal polynomials of type I polynomials 
\begin{pro}[Hypergeometric expressions for type I multiple Charlier]
		For $a\in\{1,2\}$, the  Charlier  multiple orthogonal polynomials of type I biorthogonal to the Charlier multiple orthogonal polynomials of type II in Equation \eqref{CharlierII} are
	\begin{align*}
 C_{(n_1,n_2)}^{(a)}
 &=\dfrac{(-1)^{n_a-1}(n_1+n_2-2)!}{(n_1-1)!(n_2-1)!}\dfrac{e^{-b_a}}{(b_a-\hat{b}_a)^{n_1+n_2-1}}\KF{1:1;0}{1:0;0}{(-n_a+1): (-x); --}{(-n_1-n_2+2): --; --}{\dfrac{b_a-\hat{b}_a}{b_a},{b_a-\hat{b}_a}}\\
 &=\dfrac{(-1)^{n_a-1}(n_1+n_2-2)!}{(n_1-1)!(n_2-1)!}\dfrac{e^{-b_a}}{(b_a-\hat{b}_a)^{n_1+n_2-1}}
 \sum_{l=0}^{n_a-1}\sum_{m=0}^{n_a-1-l}\dfrac{(-n_a+1)_{l+m}(-x)_l}{l!m!(-n_1-n_2+2)_{l+m}}\left(\dfrac{b_a-\hat{b}_a}{b_a}\right)^l({b_a-\hat{b}_a})^m.
\end{align*}
\end{pro}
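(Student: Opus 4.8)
The plan is to read off the explicit formula by inserting the already-proven series form of the Meixner I type I polynomials into the limit \eqref{CharliercomolimiteMeixner1KindI}. The statement that these limits are genuinely the Charlier type I multiple orthogonal polynomials, together with their orthogonality and biorthogonality, has already been secured through Proposition \ref{pro:recurrence_unicity_typeI}, so the only task left is to evaluate the limit explicitly. The crucial simplification is that the double sum in the Meixner I expression is \emph{finite}, ranging over $0\le l$ with $l+m\le n_a-1$; therefore the limit $\beta\to\infty$ may be passed inside the summation and computed term by term, and no interchange with an infinite series requires justification. After the substitution $c_a=b_a/\beta$ and $\hat{c}_a=\hat{b}_a/\beta$ the whole argument reduces to the asymptotics of a handful of Pochhammer and power factors.

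First I would handle the prefactor. Splitting $(1-c_a)^{n_1+n_2-1+\beta}=(1-b_a/\beta)^{n_1+n_2-1}(1-b_a/\beta)^{\beta}$ shows it tends to $\Exp{-b_a}$, while $(\beta)_{n_1+n_2-1}\sim\beta^{n_1+n_2-1}$ and
\[
\left(\dfrac{1-\hat{c}_a}{c_a-\hat{c}_a}\right)^{n_1+n_2-1}=\left(\dfrac{\beta-\hat{b}_a}{b_a-\hat{b}_a}\right)^{n_1+n_2-1}\sim\left(\dfrac{\beta}{b_a-\hat{b}_a}\right)^{n_1+n_2-1}.
\]
The two powers $\beta^{\pm(n_1+n_2-1)}$ cancel and the prefactor converges to $\dfrac{(-1)^{n_a-1}(n_1+n_2-2)!}{(n_1-1)!(n_2-1)!}\dfrac{\Exp{-b_a}}{(b_a-\hat{b}_a)^{n_1+n_2-1}}$, exactly the asserted one.

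For the summand I would track the remaining powers of $\beta$. The Pochhammer ratio satisfies $(\beta)_{l+m}/(\beta)_l\sim\beta^m$, the $m$-th geometric factor obeys $\bigl((c_a-\hat{c}_a)/(1-\hat{c}_a)\bigr)^m=\bigl((b_a-\hat{b}_a)/(\beta-\hat{b}_a)\bigr)^m\sim\bigl((b_a-\hat{b}_a)/\beta\bigr)^m$, and their product tends to $(b_a-\hat{b}_a)^m$; meanwhile the $l$-th factor tends to $\bigl((b_a-\hat{b}_a)/b_a\bigr)^l$ with no residual $\beta$. Hence each term converges to
\[
\dfrac{(-n_a+1)_{l+m}}{l!\,m!\,(-n_1-n_2+2)_{l+m}}\left(\dfrac{b_a-\hat{b}_a}{b_a}\right)^l(b_a-\hat{b}_a)^m(-x)_l,
\]
and repackaging the double sum through the definition \eqref{KF} gives the stated Kampé de Fériet series. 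The one delicate point is the bookkeeping of these canceling powers of $\beta$: a mismatch between the prefactor's $\beta^{-(n_1+n_2-1)}$ and the summand's $\beta^{m}$ would leave an unbounded or vanishing limit, so these exponents must be matched with care; everything else is routine.

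Finally I would note that the Kravchuk route \eqref{CharliercomolimiteKravchukI} is entirely parallel under $p_a=b_a/N$, $\hat{p}_a=\hat{b}_a/N$, $N\to\infty$, where $(-N)_{l+m}/(-N)_l\sim(-N)^m$ combines with $\bigl((\hat{p}_a-p_a)/(1-p_a)\bigr)^m\sim\bigl((\hat{b}_a-b_a)/N\bigr)^m$ to again give $(b_a-\hat{b}_a)^m$, the denominator $(N-n_1-n_2+2)_{n_1+n_2-1}(p_a-\hat{p}_a)^{n_1+n_2-1}\sim(b_a-\hat{b}_a)^{n_1+n_2-1}$ reproduces the same prefactor, and the factor $\Exp{-b_a}$ is supplied explicitly by the limit, yielding the same expression and confirming consistency of the two derivations.
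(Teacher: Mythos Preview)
Your proposal is correct and follows exactly the route the paper indicates: plug the explicit Meixner I (or Kravchuk) type I series into the limit \eqref{CharliercomolimiteMeixner1KindI} (resp.\ \eqref{CharliercomolimiteKravchukI}) and let the $\beta$-powers (resp.\ $N$-powers) cancel term by term over the finite double sum. The paper does not spell out these asymptotics and simply asserts that the limits ``give the following explicit expressions,'' so your write-up is in fact more detailed than the paper's own treatment.
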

\begin{rem}
	To the best of our knowledge these hypergeometrical expressions  are brand new.
\end{rem}


\section{Conclusions and Outlook}




While multiple orthogonal polynomials is an active area of research and explicit expressions for the more easy type II orthogonal polynomials are known, is infrequently to find explicit expressions for the corresponding type I families. This is the case of the discrete Hahn multiple orthogonal polynomials of type I. 
In this paper we have succeeded in finding such an explicit expression in terms of Kampé de Fériet series. These expressions hold for any couple of nonnegative indexes, not necessarily in the step-line. Then, applying adequate limits and following a partial Askey scheme \cite{AskeyII} we also succeed in finding generalized/Kampé de Fériet hypergeometrical expressions for the multiple orthogonal polynomials of type I for the Jacobi--Piñeiro, Meixner I, Meixner II, Kravchuk, Laguerre I, Laguerre II and Charlier families.

If we look at the Askey scheme for scalar polynomials \cite{Askey} and its analogue for type II polynomials \cite{AskeyII} we will notice there should be also a limit relation between Jacobi--Piñeiro and Laguerre II type I polynomials; as well as four limit relations from Jacobi--Piñeiro, Kravchuk, Laguerre I and Charlier to Hermite type I polynomials. However, we couldn't find any of these relations nor a explicit expression for the Hermite ones. This will be a subject for a future research, as well as all the other families from the Askey scheme that can't be obtained from Hahn.

Also, all these developments will be relevant for those cases such that the recurrence relations in the step-line are represented by a positive Hessenberg tetradiagonal matrix. These can be truncated and renormalized to stochastic matrices representing corresponding non simple random walks (beyond birth and death) that we will study the corresponding Karlin--MacGregor spectral representation elsewhere.

\section*{Acknowledgments}
AB acknowledges Centro de Matemática da Universidade de Coimbra UIDB/00324/2020, funded by the Portuguese Government through FCT/MECS.

JEFD acknowledges CIDMA Center for Research and Development in Mathematics and Applications (University of Aveiro) and the Portuguese Foundation for Science and Technology (FCT) within project UIDB/04106/2020 and UIDP/04106/2020 and [PID2021- 122154NB-I00], \emph{Ortogonalidad y Aproximación con Aplicaciones en Machine Learning y Teoría de la Probabilidad}.

AF acknowledges CIDMA Center for Research and Development in Mathematics and Applications (University of Aveiro) and the Portuguese Foundation for Science and Technology (FCT) within project UIDB/04106/2020 and UIDP/04106/2020.

MM acknowledges Spanish ``Agencia Estatal de Investigación'' research projects [PGC2018-096504-B-C33], \emph{Ortogonalidad y Aproximación: Teoría y Aplicaciones en Física Matemática} and [PID2021- 122154NB-I00], \emph{Ortogonalidad y Aproximación con Aplicaciones en Machine Learning y Teoría de la Probabilidad}.


\end{document}